\documentclass[11pt,a4paper]{article}
\usepackage{macro}
\usepackage[a4paper,left=30mm,top=30mm,right=30mm,bottom=30mm]{geometry}

\usepackage[english]{babel}

\allowdisplaybreaks[4]

\title{Density Measures}
\author{Moritz Schönherr, Friedemann Schuricht \\ \small TU Dresden -
	Fakultät Mathematik\\ \small 01062 Dresden, Germany}
\date{}

\begin{document}

\maketitle

\vspace{5mm}

\begin{abstract}
The paper treats density measures as typical examples of finitely additive
measures in~$\R^n$. We study their structure and derive basic properties.
In addition, estimates for related integrals are provided. 
The results are applied to the precise representative of general integrable
functions and then they are specialized to functions of bounded
variation. Moreover, a new 
representation of the generalized gradients in the sense of Clarke is given
for the finite dimensional case. 
\end{abstract}

\vspace{5mm}

\section{Introduction}
\label{int}

A comprehensive theory for finitely additive measures and related integrals 
was already worked out decades ago. But, due to the lack of 
relevant examples, there are hardly any significant applications
and the related integration theory has
not been studied as extensively  
as that for $\sigma$-additive measures. 
In Schuricht \& Schönherr \cite{trace} the density $\dens_x A$ of a
set $A$ at a point $x$, which is an important tool in geometric measure
theory, was now considered as set function for fixed $x$.
It turned out that this is a typical example for a whole class 
of purely finitely additive measures and that the related integrals 
are a natural tool for the treatment of traces.
In this paper we study such measures on Borel sets 
$\dom\subset\R^n$ in greater detail and 
provide further applications. 

As a typical example of the measures under consideration, recall 
that the density $\dens_x A$ is given by 
\begin{equation} \label{int-e2}
  \dens_xA := \lim_{\de\downarrow 0} 
  \frac{\lem(A\cap B_\de(x))}{\lem(B_\de(x))}
\end{equation}
if the limit exists ($\lem$ - Lebesgue measure, $B_\de(x)$ - 
ball of radius $\de$ around $x$). 
For fixed~$x$ we consider $\dens_x$ as set
function. 
Clearly, it is disjointly additive and we have 
\begin{equation} \label{int-e4}
  \dens_x(\{x\})=0\,, \quad \dens_x(A\setminus B_\de(x))=0 \qmq{for all} 
  A\,,\:\de>0\,.
\end{equation}
Thus, if $\dens_xA>0$,   
\begin{equation*}
  0 = \lim_{\de\downarrow 0} \,\dens_x(A\setminus B_\de(x)) \ne \dens_xA =
  \dens_x(A\setminus\{x\})\,.
\end{equation*}
Therefore, $\dens_x$ cannot be countably additive but only finitely additive. 
By the Hahn-Banach theorem, $\dens_x$ can be extended (though not uniquely) 
to a purely finitely additive measure on all Borel sets of $\R^n$. 
At first glance one might
think that the measure $\dens_x$ is concentrated at $\{x\}$. But, by
\reff{int-e4} it is a measure that 
\begin{equation} \label{int-e6}
  \text{``lives'' \z in any vicinity of $x$ \z but not \z at $x$.}
\end{equation}
This behavior substantially differs from what we know about 
countably additive measures, but  
it is typical for purely finitely additive measures.
In analogy to \reff{int-e2} one can consider densities $\dens_\C$ where $x$ is
replaced by an $\cL^n$-null set $\C$ and $B_\de(x)$ by the $\de$-neighborhood of
$\C$. This leads to purely finitely additive measures   
that concentrate in any neighborhood of $\C$, but vanish on $\C$
(cf. Example~\ref{pl-s2}). 

For a finitely additive measure $\mu$ and a function $f$ the integral can be
defined as usual by approximation using simple functions,
but we have to use convergence in measure $\mu$ instead of
convergence $\mu$-almost everywhere. 
Let us briefly discuss the specifics of the integral that are induced by 
\reff{int-e6}. First we notice that $\dens_x$ 
can be considered as element of the dual space
$\cL^\infty(\R^n)^*$ of the essentially bounded functions on $\R^n$ by means
of the duality pairing 
\begin{equation*}
  \df{\dens_x}{f} = \I{\R^n}{f}{\dens_x} =: \pr f(x) \qmq{for all}
  f\in\cL^\infty(\R^n)\,.
\end{equation*}
By \reff{int-e4} it is sufficient to integrate on any ball $B_\de(x)$ ($\de>0$)
instead on $\R^n$. 
Thus $\dens_x$ has some similarity to the Dirac measure $\delta_x$
supported on $\{x\}$, since 
\begin{equation*}
  \I{B_\de(x)}{f}{\dens_x} = \I{\{x\}}{f}{\delta_x} = f(x) \qmq{for all}
  f\in C(\R^n)\,, \; \de>0
\end{equation*}
($C(\R^n)$ - continuous functions). But notice that the integral with 
$\delta_x$ exploits only the value of $f$ at $\{x\}$, 
while the integral with the density $\dens_x$ uses the
values of $f$ on $B_\de(x)\setminus\{x\}$ for any arbitrary small $\de>0$.
Roughly speaking the integral $\I{\R^n}{f}{\dens_x}$ averages $f$ 
on any small ball around $x$. This looks like a limit of averages as in
the computation of the precise representative, which is given by 
\begin{equation*}
  f^\star(x) := \lim_{\de\downarrow 0} \frac{1}{\lem(B_\de(x))}
  \I{B_\de(x)}{f}{\lem} 
\end{equation*}
if the limit exists. If we fix $f$, then $\pr f(x)$ agrees with $f^\star(x)$
for $\lem$-a.e. $x\in\R^n$. The advantage of $\pr f$ is that it exists for all
$x$ and that there is an integral calculus for its computation. 
Though $\pr f(x)$ may depend on the special extensions of $\dens_x$ from
\reff{int-e2} by the Hahn-Banach theorem for $x$ on 
an $\lem$-zero set, in practice we do not have to bother about that.
To some extend the situation is comparable to considering 
equivalence classes in $\cL^p$-spaces. It turns out that the integral 
$\I{\R^n}{f}{\dens_x}$ does not only exist for $\cL^\infty$-functions, 
but $\pr f(x)$ is also well-defined for any locally integrable function 
$f\in\cL^1_{\rm loc}(\R^n)$ and $\lem$-a.e. $x\in\R^n$.

In the present paper, we want to further investigate such measures and
associated integrals. More precisely, 
we consider positive normalized purely finitely additive
measures on Borel sets $\dom\subset\R^n$ 
that concentrate in the vicinity of an $\cL^n$-null set $\C$
contained in the closure $\ol\dom$. 
It turns out that such measures are very useful for several
applications. We call them density measures and the set of all these measures
is denoted by $\Dens_\C$. We show the existence of such
measures, verify several properties, and provide a number of applications. 

Since finitely additive measures and the corresponding integration theory are
not so widely used, in Section~\ref{pl} we give some brief introduction  
to the subject adjusted to our needs. This is a summary of the more
comprehensive exposition in \cite{trace} and also serves to fix 
notation and terminology for later use.  

In Section~\ref{dm} we first introduce density measures for $\C$ 
and show that the set $\Dens_\C$ of all these density measures  
is a convex weak$^*$ compact subset of the dual of $\cL^\infty(\dom)$.
Then we verify the existence of a large class of density measures 
including $\dens_x$ and $\dens_\C$ as discussed above. Moreover
we provide estimates for integrals related to density measures in order to 
characterize the set $\Dens_\C$. 

The convex set $\Dens_\C$ is spanned by its extreme points according to the 
Krein-Milman theorem. We show in Section~\ref{ep} that the extreme points are 
\zo measures, i.e. measures that take only the values 0 and 1. 
It turns out that such measures
concentrate in the vicinity of a point $x\in\ol\dom$. A more refined analysis
shows that they are concentrated in a certain direction,
i.e. there is a ray emanating from $x$ such that any open cone with vertex at
$x$ and containing the ray contains the full mass of the measure. 

The remaining part of the paper is devoted to applications.
Section~\ref{app-pr} studies the representation of 
the precise representative by means of integrals, first 
for functions in $\cL^1_{\rm loc}(\dom)$ and then for functions of bounded
variation. In particular we discuss the relation between $\pr f(x)$ and
$f^\star(x)$. Moreover we demonstrate how the fundamental Gauss-Green formula
can be substantially extended by using integrals related to finitely additive
measures. 
Section~\ref{app-sm} is devoted to some special problems. The
existence of numerous nontrivial measures $\mu$ with the property that
\begin{equation*}
  \I{\dom}{f}{\me}=0 \quad\qmz{for all continuous} f\in L^\infty(\dom) \,
\end{equation*}
is shown in Toland \cite[p. 28]{toland} by means of the Hahn-Banach theorem.  
We give a more explicit construction using special density
measures to get an idea of how such measures look.
In Section~\ref{cgg} we consider the generalized gradients $\partial f(x)$
in the sense of Clarke for locally Lipschitz continuous functions $f$ on
$\R^n$. First a representation of $\partial f(x)$ by means of density measures 
is provided. Then we demonstrate that this allows quite simple proofs for 
basic calculus rules. 
We are grateful to Christoph Tietz for his comments about the
representation of the generalized gradient.

Let us finally mention that some of the results are already contained in 
Schönherr \cite{phd} and Schönherr \& Schuricht \cite{dens-ar}. 

\medskip

{\it Notation.}
For a set $\als$ we write $\als^c$, $\op{int}A$, $\op{ext}A$, $\bd A$, 
and $\cl A$ for its complement, interior, exterior, boundary,
and closure, respectively, and $\chi_\als$ for its characteristic function. 
We use $\ol\R=[-\infty,+\infty]$, $\R_{\ge0}=[0,\infty)$ and  
$\widehat{\R^n}=\R^n\cup\{\infty\}$ 
is the usual one point compactification.
$\ccl A$ denotes the closure of $A\subset\R^n$ within $\widehat{\R^n}$.
We write $|a|$ for the Euclidean norm of $a\in\R^n$.
By $\al$ and $\al^\si$ we denote an algebra and an $\sigma$-algebra 
of subsets of a set $\dom$, respectively. 
$\bor{\dom}$ stands for the Borel subsets of $\dom$.
$B_\delta(x)$ is the open ball around $x$ of radius $\delta>0$,  
$B_\delta^\dom(x)$ its intersection with $\dom$, and  
$A_\delta$ is the open $\delta$-neighborhood of set $A$.
The distance of point $x$ from set $A$ is given by $\op{dist}_A x$
and $\op{conv}A$ denotes the convex hull of $A$. For a measure $\mu$ we write 
$\op{supp}\mu$ for its support and $\reme{\me}{\als}$ for its restriction to
set $A$. $\lem$ is the Lebesgue measure on $\R^n$ and
$\ham^k$ the $k$-dimensional Hausdorff measure.
$C(\dom)$ stands for the space of continuous functions, 
$L^p(\dom)$ for the space of $p$-integrable functions, and
$\cL^p(\dom)$ for the related space of equivalence classes.
$\cW^{1,p}(\dom)$ is the usual Sobolev space of $\cL^p$-functions having 
weak derivatives in $\cL^p$ and $\cB\cV(\dom)$  is the space
of functions of bounded variation. For a normed space $X$, the dual is $X^*$ 
and $\df{\cdot}{\cdot}$ is the related duality form. 
For the mean value integral we write $\mIsymb$ (where formally 
$\mI{\dom}{f}{\lem}=0$ if $\lem(\dom)=0$).

\section{Preliminaries about measures and integration}
\label{pl}

Finitely additive measures and corresponding integrals 
are the essential tools for our investigation. Since the related theory 
is not so well-known, we provide a short introduction for the convenience 
of the reader. This way we also fix notation and 
we extend some aspects for our needs. A more comprehensive
survey is contained in Schuricht \& Schönherr \cite{trace}.
A comprehensive introduction to the theory of finitely additive measures can
be found in the book of
Bhaskara Rao \& Bhaskara Rao~\cite{rao} 
(cf. also Dunford \& Schwartz \cite{dunford}). Notice that the
terminology used in the literature is not uniform. Moreover, let us emphasize
that, {\it in contrast to standard usage, 
we use the notion ``measure'' for any additive set function}. 

Let $\dom\subset\R^n$ be a Borel set. Then   
$\mu:\bord\to\ol\R$ is said to be a {\it measure} on 
$\dom$ if $\mu(\emptyset)=0$ and if $\mu$ 
is {\it additive}, i.e. for a finite index set $I\subset\N$ and 
pairwise disjoint $\als_k\in\bord$, $k\in I$, we have
\begin{equation*}
  \mu\Big(\, {\textstyle \bigcup\limits_{k\in I}} A_k\Big) = 
  \sum_{k\in I}\mu(A_k) \,.
\end{equation*}
We call $\mu$ a {\it $\sigma$-measure} if it is {\it $\sigma$-additive}, i.e.
the previous additivity is also true for $I=\N$.
For additivity to be properly defined, $\mu$ cannot attain both values
$\pm\infty$. 
$\mu$ is {\it positive} if $\mu(A)\ge 0$ and {\it finite} if $\mu(A)\in\R$ 
for all $A$. As usually the {\it positive} and {\it negative part}
$\mu^\pm:\bord\to\ol\R_{\ge 0}$ of $\mu$ are given by
\begin{equation*}
  \mu^\pm(A) := \sup\{ \pm\mu(B) \mid B\subset A\,,\: B\in\bord\}\,
\end{equation*}
and $|\mu|:\bord\to\ol\R_{\ge 0}$ with $|\mu|=\mu^++\mu^-$ is the {\it total
variation} of $\mu$. Then $\mu^\pm$, $|\mu|$ are positive measures.
We say that $\mu$ is
{\it pure} if for any $\sigma$-measure $\sigma:\bord\to\ol\R$  
\begin{equation*}
  0\le\sigma\le|\mu| \qmq{implies} \sigma =0\,.
\end{equation*}
This means that a pure measure cannot be extended to a $\sigma$-measure,
but a non $\sigma$-measure need not be pure (since it can have a
$\si$-additive part). If $\mu$ is pure, then
also $\mu^\pm$ and $|\mu|$. For a positive measure~$\mu$ the 
{\it outer measure} 
\begin{equation*}
  \mu^*(A):=\inf\limits_{\substack{B\in \bord\\A\subset B}}\mu(B) \,
\end{equation*}
is (finitely) subadditive. $A\subset\dom$ is a {\it $\mu$-null} set if
$|\mu|^*(A)=0$. By $\reme{\me}{\als}$ we denote the usual restriction of 
$\mu$ to $\als\in\bord$, which is again a measure.
Measure $\mu$ is {\it bounded} if there is some $c>0$ with 
$|\me(\als)| < c$ for all $\als$. The space
\begin{eqnarray*}
  \bab
&:=&
  \{\mu:\bord\to\R\mid \text{$\mu$ is a bounded measure}\}\,, 
\end{eqnarray*}
is a Banach space with $\|\mu\|=|\mu|(\dom)$. It turns out that any 
$\mu\in\bab$ can be uniquely decomposed into the sum of a pure and a
$\si$-additive measure.

We call $\me$ {\it absolutely continuous} with respect to  
$\lme$ ($\me\ac\lme$), if for any $\eps>0$ there is some 
$\delta>0$ such that for all $A\in\bord$
\begin{equation*}
  |\lme(A)|<\delta \qmq{implies} |\me(A)|<\eps \,.
\end{equation*}
Then also $|\me|\ac\lme$. 
We say that $\me$ is {\it weakly absolutely continuous}
with respect to $\lme$ ($\me\wac\lme$), if for all $A\in\bord$ 
\begin{equation*}
  |\lme|(A)=0 \qmq{implies} \me(A)=0\,. 
\end{equation*}
Then also $|\mu|\wac\lme$. 
For a (not necessarily bounded) measure $\lme$ on $\dom$ we define
\begin{equation*}
  \op{ba}(\dom,\bord,\la) := \{ \mu\in\bab\mid \mu\wac\lme\}\,.
\end{equation*}
Then $\op{ba}(\dom,\bord,\la)$ is a closed subspace of $\bab$ and, thus, also
a Banach space. Measures $\mu\in\bawl{\dom}$ 
that do not charge sets of $\lem$-measure zero are of special interest, 
because they are naturally suited to the integration of functions that are
only defined up to a set of $\lem$-measure zero. 

Let us now consider some example for a pure measure that is typical for the
measures we intend to study.

\begin{example}  \label{pl-s2}
Let $\dom\in\cB(\R^n)$, let $x\in\cl\dom$, and let
$\lem(\dom\cap B_\delta(x))>0$ for all $\delta>0$. 
We define the density of $A\in\cB(\dom)$ at $x$ within
$\dom$ by
\begin{equation} \label{pl-s2-1} 
\dens_x^\dom(A) := \lim_{\delta \downarrow 0} 
\frac{\lem(A\cap\dom\cap B_\delta(x))}{\lem(\dom\cap B_\delta(x))}
\end{equation}
if the limit exists. This quantity, usually considered for $\dom=\R^n$,  
is an important ingredient in geometric measure theory. Clearly,  
$\dens_x^\dom(\cdot)$ as set function is disjointly additive. Moreover
it can be extended, though not unique,
to a measure defined on $\cB(\dom)$ such that
\begin{equation*} 
  \liminf_{\delta \downarrow 0} 
  \frac{\lem(A\cap\dom\cap B_\delta(x))}{\lem(\dom\cap B_\delta(x))}
  \le \dens_x^\dom(A) \le 
  \limsup_{\delta\downarrow 0} 
  \frac{\lem(A\cap\dom\cap B_\delta(x))}{\lem(\dom\cap B_\delta(x))} \,
\end{equation*}
for all $A\in\cB(\dom)$ (cf. Corollary~\ref{dm-s2c} below with 
$C=\{x\}$). Thus $\dens_x^\dom(A)\in[0,1]$ for all~$A$.
In particular, $\dens_x^\dom(\{x\})=0$ and, clearly,  
\begin{equation} \label{pl-s2-2}
  \dens_x^\dom(B_\delta(x)\cap\dom)=1 \zmz{and} 
  \dens_x^\dom(\dom\setminus B_\delta(x))=0 \qmq{for all} \delta>0\,.
\end{equation}
If $\lem(A)=0$, then $\dens_x^\dom(A)=0$ and, thus,
\begin{equation*}
  \dens_x^\dom\in\bawl{\dom} \,.
\end{equation*}
Obviously $\dens_x^\dom$ cannot be $\sigma$-additive, 
since otherwise $\dens_x^\dom(\dom\setminus\{x\})=0$ by \reff{pl-s2-2}, 
which is a contradiction. By \reff{pl-e3} below, $\dens_x^\dom$ is pure
(cf. also \reff{pl-e2}). 

We can extend the example by choosing some 
\begin{equation} \label{pl-s2-4}
  \C\subset\ol\dom \qmq{with} \lem(\C\cap\dom)=0
\end{equation}
such that 
\begin{equation*}
  0<\lem(\dom\cap B_\delta(x)) \zmz{for all} \delta>0\,,\quad 
  \lem(\dom\cap C_{\tilde\delta})<\infty \qmq{for some} \tilde\delta>0
\end{equation*}
and by defining a density $\dens_\C^\dom$ 
at $\C$ 
as in \reff{pl-s2-1} by replacing $B_\delta(x)$ with the $\de$-neighborhood 
$\C_\delta$. Then we get again a pure measure in $\bawl{\dom}$.

Of course we can make the same construction for some $\C\subset\ol\dom$ with 
$\lem(\C\cap\dom)>0$. Then 
the restriction $\reme{\dens_\C^\dom}{(\dom\cap\cl\C)}$ 
of $\dens_\C^\dom$ on $\dom\cap\cl\C$ is a scaled version of 
$\lem$ restricted to 
$\dom\cap\cl\C$ and, hence, $\sigma$-additive. By
\begin{equation*}
  0\le\reme{\dens_\C^\dom}{(\dom\cap\cl\C)}\le\dens_\C^\dom\,,
\end{equation*}
$\dens_\C^\dom$ is not pure in this case. We readily get 
$\dens_\C^\dom(A)=0$ for all $A\subset\dom\setminus\cl\C$ and, thus,
the measure $\dens_\C^\dom$ has to be $\si$-additive at all. 
Since $\sigma$-measures are well understood, we focus on
cases where \reff{pl-s2-4} is met.
\end{example}

It turns out that the nature of pure measures as observed in the previous
example is typical for pure measures in $\bawl{\dom}$.
More precisely, in the case $\lem(\dom)<\infty$ we have that 
$\me\in\bawl{\dom}$ is pure if and only
if there exists a decreasing sequence $\{A_k\}\subset\cB(\dom)$ such that
\begin{equation} \label{pl-e3}
\lem(\als_k) \overset{k\to\infty}{\longrightarrow} 0 \quad\qmq{and}\quad
|\me|(\als_k^c) = 0 \zmz{for all} k\in\N\,.
\end{equation}
(cf. \cite[p. 244]{rao}).
This raises the question where a pure measure ``lives''.
For a $\si$-measure $\si$ the {\it support} is given by 
\begin{equation*}
  \supp\sme := \big\{x\in\dom\:\big|\: 
  |\sme|(U\cap\dom)>0 \text{ for all open }U\subset \R^n \text{ with }
  x\in U   \big\} \,.
\end{equation*}
It contains the whole mass of the measure and, thus, indicates where it
is concentrated.  
However, by \reff{pl-e3}, a pure measure
$\me\in\bawl{\dom}$ vanishes on its support. 
Thus it is inappropriate to describe where such a measure ``lives''. 
Therefore we also consider the 
{\it core} of $\me\in\bab$, a slight modification of the support, 
given by
\begin{equation}\label{pl-core}
  \cor{\me} := \big\{ x\in\R^n\:\big|\: 
  |\me|(U\cap\dom)>0 \text{ for all open }U\subset \R^n \text{ with }
  x\in U   \big\} \,.
\end{equation}
Obviously,
\begin{equation}\label{pl-e00}
  \cor{\me} \zmz{is closed in} \R^n \qmq{and}
  \cor{\me}\subset\ol\dom \,.
\end{equation}
Notice that $\cor\me$ must not be contained in $\dom$. For unbounded 
$\dom$ a nontrivial measure can also concentrate ``near $\infty$''.
In this case the core, as defined in \reff{pl-core}, is empty 
by lack of compactness of $\cl\dom$.
That we can properly treat also such cases we (tacitly) use the 
compactification
\begin{equation} \label{e-comp}
  \ccl{\R^n}:=\R^n\cup\{\infty\} \,
\end{equation}
instead of $\R^n$ in the definition \reff{pl-core}.
Here $B_\de(\infty):=\cl{B_{1/\de}(0)}^c$ are the open balls around $\infty$ 
and, in addition to the (usual) closure 
$\cl A$ of $A$ in $\R^n$, we also use
\begin{equation}\label{e-compp}
  \ccl A := \mbox{(compactified) closure of $A$ in $\ccl{\R^n}$}\,.
\end{equation}
Then we have that    
\begin{equation} \label{pl-e0}
  \cor\mu\ne\emptyset \zmz{with} \cor\mu\subset\ccl\dom
  \qmq{if} \mu\ne 0
\end{equation}
and, for all open $U\subset \ccl{\R^n}$ with $\cor{\me}\subset U$,
\begin{equation} \label{pl-e1}
  |\me|(U\cap\dom) = |\me|(\dom)\,, \quad
  |\me|(\dom\setminus U)=0
\end{equation}
(cf. \cite[p. 23]{trace}). 
Thus the full mass of the measure is contained in 
any neighborhood of the core.
For the description of that situation we call $A\in\bord$ an {\it aura} of
measure $\mu\in\bab$ if 
\begin{equation*}
  |\mu|(A^c)=0\,.
\end{equation*}
An aura is not unique, but the combination of the core and a
suitable aura allows an appropriate description where a pure measure $\me$
concentrates. 
For $\mu=\dens_x^\dom$ from Example~\ref{pl-s2} we have
\begin{equation*}
  \cor\mu=\{x\}\,,\quad \mu(\{x\})=0\,,\quad 
  \me(B_\delta(x)\cap\dom)=1
  \zmz{for all} \delta>0\,.
\end{equation*}
Thus any $A=B_\delta(x)\cap\dom$ is an aura of $\mu$. 
If $x\in\ol\dom\setminus\dom$, then $\dom\cap\cor\mu=\emptyset$ but 
$\mu(\dom)=1$. This means   
that $\mu$ concentrates near $x$ in $\dom$. 
The notion of core allows to supplement \reff{pl-e3} by the sufficient
condition that, for any $\dom\in\cB(\R^n)$, 
\begin{equation} \label{pl-e2}
  \text{$\mu\in\bawl{\dom}$ is pure} \qmq{if}
  \lem(\dom\cap\cor\mu)=0\,
\end{equation}
(cf. \cite[p. 24]{trace}).
The condition is clearly satisfied if $\dom\cap\cor\mu=\emptyset$.

Recall that $f_k$ {\it converges in measure} $\mu$ 
to $f$ (short $\f_k \to f$ i.m. $\mu$) if
\begin{equation*}
\lim_{k \to \infty} |\mu|^* \{x\in\dom \mid |f_k(x) - f(x)| > \eps \} = 0 
\qmq{for all} \eps>0\,.
\end{equation*} 
The limit is unique if we identify $f,g:\dom\to\R$ that   
{\it agree in measure} $\mu$ ($f=g$ i.m.), i.e.
\begin{equation*}
  |\mu|^*\big(\big\{x\in\dom \:\big|\: |f(x)-g(x)|>\eps \big\}\big) = 0
  \qmq{for all} \eps>0\,.
\end{equation*}
Notice that all functions $f_c(x)=c|x|$ on $\R^n$ with $c\in\R$ 
agree in measure $\dens_0^\dom$ according to Example~\ref{pl-s2}. 
We say that $f\le g$ i.m. $\mu$ if 
\begin{equation*}
  |\mu|^*\big(\big\{x\in\dom\:\big|\: g(x)-f(x)<-\eps \big\}\big)=0
  \qmq{for all} \eps>0\,.
\end{equation*}
Notice that $|\mu|^*(A)=|\mu|(A)$ for $A\in\cB(\dom)$. 

A function $f:\dom\to\R$ is $\mu$-measurable if $f_k\to f$ i.m. $\mu$ for a
sequence $f_k$ of simple Borel measurable functions. 
The integral $\I{\dom}{f}{\mu}$ of a $\mu$-measurable function $f$ is defined
as usually by approximation with simple functions 
if one replaces convergence ``$\mu$-a.e.'' by convergence ``i.m. $\mu$''.
We have that $\mu$-integrability agrees
with $|\mu|$-integrability and $f$ is integrable if and
only if $|f|$ is integrable. 
This integral shares many properties with the usual one 
(e.g. standard calculus rules, dominated convergence) if we replace
``$\mu$-a.e.'' by ``i.m. $\mu$''. For example the integral vanishes if 
$f=0$ i.m. $\mu$.
For $\sigma$-measures, the integral agrees with that in the usual  
``$\si$-theory''.  

For a $\sigma$-measure it is sufficient to integrate on its support.
However, for a pure measure we have to integrate on an aura (which is
not unique), whereas integrating on the core is insufficient. 
We use the notation
\begin{equation*}
  \sI{C}{f}{\mu} := \I{\dom\cap U}{f}{\mu} \qmq{for any $\C$ and open $U$ with}
  \cor\mu\subset C \subset U 
\end{equation*}
to give a more precise indication of the domain of integration
(cf. \reff{pl-e1} for justification). 

For a measure $\mu$ on $\dom\subset\cB(\R^n)$
and for $1\le p\le\infty$ we define
as usually
\begin{equation*}
  L^p(\dom,\cB(\dom),\me) := \big\{ f:\dom\to\R\:\big|\: 
  \|f\|_p<\infty  \big\}
\end{equation*}
where
\begin{equation}\label{pl-e10}
  \|f\|_p := \Big( \I{\dom}{|f|^p}{|\mu|}\Big)^{\frac1p} 
  \quad (1\le p<\infty)\,,
  \quad
  \|f\|_\infty := \essup{\dom} |f|\,.
\end{equation}
The corresponding spaces of equivalence classes with respect to
equality i.m., denoted by
\begin{equation*}
  \cL^p(\dom,\cB(\dom),\me) \,,
\end{equation*}
are normed spaces, but they are not complete in general. For a
$\sigma$-measure $\mu$ the spaces agree with
the usual ones. We briefly write
\begin{equation*}
  L^p(\dom):= L^p(\dom,\bor{\dom},\lem)  \,, \quad
  \cL^p(\dom):= \cL^p(\dom,\bor{\dom},\lem) \,.
\end{equation*}
The dual of $\cL^\infty(\dom,\cB(\dom),\lem)$ is of special relevance for our
investigations and we have
\begin{equation} \label{pl-e6}
   \cL^\infty(\dom,\cB(\dom),\lem)^* =  \bawl{\dom}  
\end{equation}
if we identify $f^* \in \cL^\infty(\dom,\cB(\dom),\lem)^*$ with 
$\me\in\bawl{\dom}$ such that 
\begin{equation*}
  \df{f^*}{f} = \I{\dom}{f}{\mu} \qmq{for all}
  f \in \cL^\infty(\dom,\cB(\dom),\lem)\,
\end{equation*}
where $\|f^*\| = \|\mu\| = |\mu|(\dom)$. 
For $\mu\in\bawl{\dom}$ we thus have that all $f\in\cL^\infty(\dom)$ are
$\mu$-integrable, i.e. 
\begin{equation*}
  \cL^\infty(\dom) \subset \cL^1(\dom,\cB(\dom),\mu)\,.
\end{equation*}
It turns out that also unbounded functions can be
$\mu$-integrable. 
For measures $\dens_x^\dom$ as in Example~\ref{pl-s2},
integrability conditions for $f\in\cL^1_{\rm loc}(\dom)$ 
are formulated in Theorem~\ref{app-s1} below
(cf. also \cite{trace} for more general results).

\section{Density Measures}
\label{dm}

The density of a set $A\subset\R^n$ at a point $x$ as used in geometric
measure theory and, more generally, pure measures that concentrate near 
a set $\C$ as in Example \ref{pl-s2} turn out to be relevant in several
applications. We consider measures of that kind as density measures
and investigate their properties. 
Our definition of such measures requires them to be 
normalized. 
More generally, 
any nontrivial bounded positive measure, whose core has Lebesgue
measure zero, could be seen as a density measure. 

Let $\dom\in\bor{\R^n}$ and let $\C\subset\cl\dom$ be such that
\begin{equation} \label{dm-e0a}
  \C  \zmz{is closed (in $\R^n$)\,,}\quad   \lem(\C\cap\dom) = 0 
  \qmq{and} \C\ne\emptyset\,.
\end{equation}
A measure $\me\in\bawl{\dom}$ is called 
{\it density measure} (short {\it density}) {\it at} $\C$ (on $\dom$) if 
\begin{equation} \label{dm-e0}
  \me\ge 0 \qmq{and}  \me(\Cd\cap\dom) = \me(\dom) = 1
  \qmq{for all} \delta>0\,.
\end{equation}
The set of all density measures at $\C$ (on $\dom$) is denoted by
\begin{equation*}
\Dens_\C \qquad (\text{briefly also } \Dens_x \text{ if } C={\{x\}})\,.		
\end{equation*}
Notice that $\Dens_\C=\emptyset$ if $\lem(\dom \cap \dnhd{\C}{\delta}) = 0$ 
for some $\delta >0$. Thus $\Dens_\C\ne\emptyset$ implies that
\begin{equation} \label{dm-e0b}
  \lem(\Cd\cap\dom)>0 \zmz{for all} \delta>0\,.
\end{equation}
We say that $\C\subset\cl\dom$ is a {\it density set} (of $\dom$) 
if it satisfies \reff{dm-e0a} and \reff{dm-e0b}. 
Corollary~\ref{dm-s2c} below implies that $\Dens_\C\ne\emptyset$ 
for all bounded density sets $\C\subset\cl\dom$. 
We now show that density measures are pure and have their core in $\ccl\C$
(cf. \reff{e-compp}).
\begin{theorem} \label{dm-s1}   
Let $\dom \in \bor{\R^n}$, let $\C\subset\cl\dom$ be a density set,
and let $\me\in\Dens_\C$. Then
\begin{equation} \label{dm-s1-1}
  \cor\mu\ne\emptyset\,, \quad \cor{\me} \subset \ccl\C\,, \quad
  \|\me\|=|\me|(\dom)=1\,,
\end{equation}
and $\me$ is pure. Moreover
\begin{equation} \label{dm-e1}
  \I{\dom}{f}{\me} = \sI{\ccl\C}{f}{\me} = \lim_{\delta \downarrow 0}
  \I{\Cd\cap\dom}{f}{\me} = \lim_{\delta\downarrow 0} 
  \mI{\Cd\cap\dom}{f}{\me} \,
\end{equation}
for all $f\in\cL^\infty(\dom)$. 
\end{theorem}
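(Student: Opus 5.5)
Since $\me\ge0$, we have $|\me|=\me$, and therefore $\|\me\|=|\me|(\dom)=\me(\dom)=1$ directly from \reff{dm-e0}. Because $\me\ne0$, \reff{pl-e0} gives $\cor\me\ne\emptyset$.

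For $\cor\me\subset\ccl\C$, take $y\in\ccl{\R^n}\setminus\ccl\C$. Since $\ccl\C$ is closed in $\ccl{\R^n}$, there is an open neighborhood $U$ of $y$ with $U\cap\Cd=\emptyset$ for some $\delta>0$; separate the compact set $\ccl\C$ from the point $y$.
\begin{itemize}
\item If $y\in\R^n$, choose $U=B_{\delta}(y)$ with $2\delta<\op{dist}_\C y$.
\item If $y=\infty\notin\ccl\C$, then $\C$ is bounded and $U=B_\delta(\infty)$ works for small $\delta$.
\end{itemize}
Then $U\cap\dom\subset\dom\setminus\Cd$, so additivity and \reff{dm-e0} give $\me(U\cap\dom)\le\me(\dom)-\me(\Cd\cap\dom)=0$. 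Hence $y\notin\cor\me$.

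Purity is the main point. When $\C$ is bounded, $\cor\me\subset\C$, so $\lem(\dom\cap\cor\me)\le\lem(\C\cap\dom)=0$ and \reff{pl-e2} gives purity directly. When $\C$ is unbounded, $\cor\me$ may also contain $\infty$, but $\cor\me$ as a subset of $\R^n$ is still contained in $\C$. I would therefore expect \reff{pl-e2}, read with the compactified core as stated in the paper, to still apply, since $\{\infty\}\cap\dom=\emptyset$.

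If one prefers an argument independent of that subtlety, argue directly. Let $0\le\si\le\me$ with $\si$ a $\si$-measure.
\begin{itemize}
\item $\si\ll\lem$ holds on null sets, so $\si(\C\cap\dom)=0$.
\item $\si(\dom\setminus\C_{1/k})\le\me(\dom\setminus\C_{1/k})=0$ for every $k$.
\item The sets $\dom\setminus\C_{1/k}$ increase to $\dom\setminus\C$ because $\C$ is closed, so $\si$-additivity gives $\si(\dom\setminus\C)=0$.
\end{itemize}
Hence $\si=0$. This direct argument avoids \reff{pl-e2} entirely, and I would use it.

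For \reff{dm-e1}, fix $f\in\cL^\infty(\dom)$. Then
\[
\Big|\I{\dom}{f}{\me}-\I{\Cd\cap\dom}{f}{\me}\Big|\le\|f\|_\infty\,\me(\dom\setminus\Cd)=0 .
\]
So the integrals over $\Cd\cap\dom$ are constant in $\delta$ and the limit is trivial. Since $\me(\Cd\cap\dom)=1$, the mean value integral coincides with the integral. The identity with $\sI{\ccl\C}{f}{\me}$ follows from \reff{pl-e1}: for any open $U\supset\ccl\C\supset\cor\me$ we have $\me(\dom\setminus U)=0$, and the same estimate applies. Everything is routine except purity, where the only care needed is the point at infinity in the unbounded case.
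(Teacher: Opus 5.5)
Your proof is correct. For the norm, for the core being nonempty and contained in $\ccl\C$, and for \reff{dm-e1}, it follows the paper's argument. Your explicit exclusion of $\infty$ from $\cor\mu$ when $\C$ is bounded even fills in a case the paper passes over, since the paper only treats points of $\R^n\setminus\C$.

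The genuine difference is purity. The paper gets it from the cited criterion \reff{pl-e2} via $\lem(\cor\mu\cap\dom)\le\lem(\ccl\C\cap\dom)=\lem(\C\cap\dom)=0$. The point $\infty$ causes no difficulty there because $\dom\subset\R^n$, so your hesitation about that route was unnecessary. You instead check the definition of purity directly, for a $\sigma$-measure $0\le\sigma\le\mu$:
\begin{itemize}
\item $\sigma(\C\cap\dom)\le\mu(\C\cap\dom)=0$. The reason is $\sigma\le\mu$ together with $\mu$ vanishing on $\lem$-null sets, not an a priori absolute continuity of $\sigma$.
\item $\sigma(\dom\setminus\C_{1/k})\le\mu(\dom\setminus\C_{1/k})=0$ for every $k$.
\item Continuity from below along $\dom\setminus\C_{1/k}\uparrow\dom\setminus\C$ gives $\sigma(\dom\setminus\C)=0$; this step uses that $\C$ is closed.
\end{itemize}

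The paper's route is a one-liner given the core machinery imported from earlier work. It fits the paper's viewpoint, used again in Theorem~\ref{ep-s3}, that purity is read off from where the core lies. Your route is self-contained and needs nothing beyond $\sigma$-additivity. It also exposes the mechanism: $\mu$ puts full mass on every $\C_{1/k}\cap\dom$, these sets shrink to the $\lem$-null set $\C\cap\dom$, and no $\sigma$-additive minorant can follow mass that escapes this way.
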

\noi
Notice that $\ccl\C=\C$ if $\C$ is bounded and $\ccl\C=\C\cup\{\infty\}$ if
$\C$ is unbounded.

\begin{proof}   
We have $\cor\mu\ne\emptyset$ by \reff{pl-e0}.
Let $x\in\R^n\setminus\C$. Then 
$\delta:=\frac{1}{2}\distf{\C}(x)>0$ and 
\begin{equation*}
\me(\dom\cap B_{\tilde\delta}(x)) \le \me(\dom\setminus\C_\delta) = 0 
\qmq{for all} 0<\tilde\delta<\delta\,.
\end{equation*}
Hence $x\notin\cor{\me}$ and, thus, $\cor{\me}\subset\ccl\C$.
Moreover
\begin{equation*}
\lem(\cor{\me}\cap\dom) \le \lem(\ccl\C\cap\dom) =
\lem(\C\cap\dom) = 0 \,.
\end{equation*}
Therefore $\me$ is pure by \reff{pl-e2}. Since $\me\ge 0$, we have 
$\|\me\|=\me(\dom)=1$.
\reff{dm-e1} readily follows from \reff{dm-e0} and \reff{dm-s1-1}.
\end{proof}

Since $\cor\me\subset\ccl\dom$ for a density measure at $\C$, one
could consider density measures for closed $\C\subset\ccl\dom$ in
\reff{dm-e0a}. 
However, the case presented here appears to be appropriate for our 
applications. 
Now we show that the set $\Dens_\C$ of all density measures is
convex and weak* compact (as set in the dual of $\cL^\infty(\dom)$).
\begin{theorem} \label{dm-s1a} 
Let $\dom\in \bor{\R^n}$ and let $\C\subset\cl\dom$ be a density set.
Then $\Dens_\C$ is a convex weak* compact subset of $\bawl{\dom}$.
\end{theorem}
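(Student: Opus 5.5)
We identify $\bawl{\dom}$ with the dual of $\cL^\infty(\dom)$ via \reff{pl-e6}. The plan is to show that $\Dens_\C$ is a weak* closed subset of the closed unit ball of this dual space. Weak* compactness then follows from the Banach--Alaoglu theorem. Convexity is handled separately and is immediate.

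\emph{Convexity.} Let $\mu_1,\mu_2\in\Dens_\C$ and $t\in[0,1]$, and set $\mu=t\mu_1+(1-t)\mu_2$. This is a bounded additive set function, and it is weakly absolutely continuous with respect to $\lem$, since both $\mu_i$ vanish on $\lem$-null sets. It is positive, and for every $\delta>0$ we have
\begin{equation*}
  \mu(\Cd\cap\dom)=t+(1-t)=1=\mu(\dom)\,.
\end{equation*}
Hence $\mu\in\Dens_\C$.

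\emph{Boundedness.} By Theorem~\ref{dm-s1}, every $\mu\in\Dens_\C$ satisfies $\|\mu\|=1$. So $\Dens_\C$ lies in the closed unit ball, which is weak* compact by Banach--Alaoglu.

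\emph{Weak* closedness.} The key observation is that each defining condition in \reff{dm-e0} can be tested against a fixed function in $\cL^\infty(\dom)$:
\begin{itemize}
\item $\mu(A)=\df{\mu}{\chi_A}$ for every $A\in\bord$;
\item $\mu\ge0$ is equivalent to $\df{\mu}{\chi_A}\ge0$ for all $A\in\bord$;
\item the normalization becomes $\df{\mu}{\chi_\dom}=1$ and $\df{\mu}{\chi_{\Cd\cap\dom}}=1$ for all $\delta>0$.
\end{itemize}
Each of these is a weak* closed condition, because $\mu\mapsto\df{\mu}{f}$ is weak* continuous for fixed $f$. Thus $\Dens_\C$ is an intersection of weak* closed sets, hence weak* closed. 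Membership in $\bawl{\dom}$ needs no separate check, since the whole dual space is $\bawl{\dom}$.

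This can be phrased equivalently with nets. If $\mu_\alpha\to\mu$ weak* with $\mu_\alpha\in\Dens_\C$, then passing to the limit in $\df{\mu_\alpha}{\chi_A}$ preserves every condition above.

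\emph{Main obstacle.} The only delicate point is justifying that $\df{\mu}{\chi_A}=\mu(A)$, i.e.\ that the duality \reff{pl-e6} is realized by the integral, whose value on a characteristic function is the measure of the set. With that granted, a weak* closed subset of a weak* compact set is weak* compact, which completes the proof.
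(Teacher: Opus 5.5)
Your proof is correct and is essentially the paper's argument. Convexity is a direct computation. Weak* closedness holds because every defining condition of $\Dens_\C$ is a condition on $\langle\mu,\chi_A\rangle$ for a fixed indicator function; the paper phrases this dually, by exhibiting for each $\lambda\notin\Dens_\C$ an explicit weak* open neighborhood disjoint from $\Dens_\C$. Compactness then follows from the Banach--Alaoglu theorem inside the closed unit ball, and your explicit appeal to Theorem~\ref{dm-s1} for $\|\mu\|=1$ supplies a step the paper leaves implicit.
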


\begin{proof}    
Let $\Dens_\C\ne\emptyset$ (otherwise the statement is trivial), let
$\me_1,\me_2\in\Dens_\C$, and let $\alpha\in [0,1]$. 
By \reff{dm-e0},
\begin{equation*}
  \big(\alpha\me_1+(1-\alpha)\me_2\big)(B) =
  \alpha\me_1(B)+(1-\alpha)\me_2(B) \ge 0 \qmq{for all} B\in\bor\dom\,.
\end{equation*}
Hence $\alpha\me_1+(1-\alpha)\me_2\ge 0$. Moreover, for all $\delta>0$,
\begin{eqnarray*}
\big(\alpha\me_1+(1-\alpha)\me_2\big)(\Cd\cap\dom)
&=&
\alpha \me_1(\Cd\cap\dom) + (1-\alpha)\me_2(\Cd\cap\dom) \\
&=&
\alpha \me_1(\dom) + (1-\alpha)\me_2(\dom) \\
&=& 
1 \: = \:
\big(\alpha \me_1 + (1-\alpha)\me_2\big)(\dom) \,.
\end{eqnarray*}
Thus $\Dens_\C$ is convex. 

For any fixed $\lme \in \bawl{\dom} \setminus \Dens_\C$ we now construct some   
weak$^*$ open neighborhood $V(\lme)$ of $\lme$ with
\begin{equation} \label{dm-s1a-5}
  V(\lme)\cap\Dens_\C = \emptyset\,.
\end{equation}
For that we distinguish several cases. First let 
$\lme\not\ge 0$. Then there is some $B\in\bor{\R^n}$ with
$\lme(B)<0$. Hence $\gamma:=\df{\lme}{\chi_B}=\I{\dom}{\chi_B}{\lme}<0$ and
\begin{equation*}
   V(\lme) := \big\{\me\in\bawl{\dom} \:\big|\:
               |\df{\me-\lme}{\chi_B}|<\tfrac{\gamma}{2} \big\}
\end{equation*}
is a weak* open neighborhood of $\lme$ with
\reff{dm-s1a-5}.
Let now $\lme\ge 0$ and $\lme(\Cd\cap\dom)\ne 1$ for some
$\delta>0$. Since 
$\lme(\Cd\cap\dom)=\I{\Cd\cap\dom}{}{\lme}=\df{\lme}{\chi_{\Cd\cap\dom}}$,
\begin{equation*}
   V(\lme) := \big\{\me\in\bawl{\dom} \:\big|\:
               |\df{\me-\lme}{\chi_{\Cd\cap\dom}}|<|\lme(\Cd\cap\dom)-1| \big\}
\end{equation*}
is a weak* open neighborhood of $\lme$ with \reff{dm-s1a-5}. In the case 
$\lme(\dom)\ne 1$ we argue analogously. This verifies \reff{dm-s1a-5}
for all considered $\lme$. Consequently, the complement of 
$\Dens_\C$ is weak* open. Thus $\Dens_\C$ is a weak* closed subset
of the closed unit ball, that is weak* compact by the Alaoglu theorem.
Hence $\Dens_\C$ itself is weak* compact (cf. also \cite[p. 55]{clarke-fa}). 
\end{proof}

Let us now extend the idea of Example~\ref{pl-s2} by replacing the Lebesgue
measure $\lem$ with some $\lme\in\bawl{\dom}$ satisfying
\begin{equation} \label{dm-e3} 
  \lme\ge 0\,, \quad \lme(\Cd\cap\dom) > 0 \qmq{for all} \delta>0\,.
\end{equation}
We consider densities with respect to $\lme$ instead of $\lem$. 

\begin{proposition} \label{dm-s2e}  
Let $\dom\in\bor{\R^n}$, let $\C\subset \cl{\dom}$ be a density set, and let
$\lme\in\bawl{\dom}$ be such that \reff{dm-e3} holds.
If $\me\in\bawl{\dom}$ satisfies
\begin{equation} \label{dm-s2e-2}
   \liminf_{\delta \downarrow 0} 
  \frac{\lme(A\cap\dom\cap\C_\delta)}{\lme(\dom\cap\C_\delta)}
  \le \me(A) \le 
  \limsup_{\delta\downarrow 0} 
  \frac{\lme(A\cap\dom\cap\C_\delta)}{\lme(\dom\cap\C_\delta)} \qmq{for all}
  A\in\cB(\dom)\,,
\end{equation}
then $\mu\in\Dens_\C$ and $\me\wac\lme$.
\end{proposition}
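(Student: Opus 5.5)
The plan is to read off each defining property of $\Dens_\C$ in \reff{dm-e0}, and then weak absolute continuity, directly from the two-sided bound \reff{dm-s2e-2} by plugging in suitable sets $A$. Throughout, write $q_\delta(A):=\lme(A\cap\dom\cap\C_\delta)/\lme(\dom\cap\C_\delta)$. This quantity is well defined because of \reff{dm-e3}, and since $\lme\ge0$ we have $0\le q_\delta(A)\le 1$ for every $A\in\cB(\dom)$.

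\emph{Positivity and normalization.} Since $q_\delta(A)\ge 0$ for all $\delta$, the left inequality in \reff{dm-s2e-2} gives $\me(A)\ge\liminf_{\delta\downarrow0}q_\delta(A)\ge 0$, so $\me\ge0$. Taking $A=\dom$ gives $q_\delta(\dom)=1$ for all $\delta>0$, hence $\me(\dom)=1$.

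\emph{Concentration near $\C$.} Fix $\delta_0>0$ and take $A=\C_{\delta_0}\cap\dom$. For $0<\delta<\delta_0$ we have $A\cap\dom\cap\C_\delta=\dom\cap\C_\delta$, so $q_\delta(A)=1$. The liminf as $\delta\downarrow0$ depends only on small $\delta$, so the left inequality yields $\me(\C_{\delta_0}\cap\dom)\ge1$. On the other hand, $\me\ge0$ and additivity give $\me(\C_{\delta_0}\cap\dom)\le\me(\dom)=1$. Hence $\me(\C_{\delta_0}\cap\dom)=\me(\dom)=1$ for every $\delta_0>0$. Together with the hypothesis $\me\in\bawl{\dom}$, this shows $\me\in\Dens_\C$.

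\emph{Weak absolute continuity.} Let $A\in\cB(\dom)$ with $|\lme|(A)=\lme(A)=0$. By monotonicity of the positive measure $\lme$, $\lme(A\cap\dom\cap\C_\delta)=0$ for every $\delta$, so $q_\delta(A)=0$ for all $\delta$. The right inequality then gives $\me(A)\le0$, and combined with $\me\ge0$ we get $\me(A)=0$, i.e. $\me\wac\lme$.

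The only point needing care, and the closest thing to an obstacle, is the observation that the $\liminf$ and $\limsup$ in \reff{dm-s2e-2} depend only on arbitrarily small $\delta$. This is what lets us ignore the values $\delta\ge\delta_0$ in the concentration step. Everything else is direct substitution into the two-sided bound.
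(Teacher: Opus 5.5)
Your proof is correct and follows the paper's own route: the paper simply asserts that $\me\ge0$, $\me(\dom)=\me(\C_\delta\cap\dom)=1$ and $\me\wac\lme$ follow directly from \reff{dm-s2e-2}. Your version only makes explicit the test sets the paper leaves implicit, namely $A=\dom$, $A=\C_{\delta_0}\cap\dom$ (with $\delta<\delta_0$), and $\lme$-null sets $A$.
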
 
\noi
We call $\me\in\bawl{\dom}$ satisfying \reff{dm-s2e-2} also  
{\it $\lme$-density (on $\dom$) at $\C$} and we denote 
the set of all $\lme$-densities at $\C$ by $\lDens_\C$.
Obviously, $\dens_x^\dom$ according
to Example~\ref{pl-s2} belongs to $\LDens_x$. 

\begin{proof}
Clearly $\me\ge 0$ and $\me(\dom)=\me(\C_\delta\cap\dom)=1$. Thus 
$\mu\in\Dens_\C$ and \reff{dm-s2e-2} readily implies $\me\wac\lme$.
\end{proof}

\begin{corollary} \label{dm-s2g}
Let $\dom\in\bor{\R^n}$, let $\C\subset \cl{\dom}$ be a density set, and let 
$\lme\in\bawl{\dom}$ satisfy \reff{dm-e3}. 
If $\me\in\bawl{\dom}$ is a $\lme$-density at $\C$, i.e. it satisfies 
\reff{dm-s2e-2}, then 
\begin{equation}  \label{dm-s2-1}
  \liminf_{\delta \downarrow 0}
  \mI{\Cd\cap\dom}{f}{\lme} \le \sI{\ccl\C}{f}{\me} 
\le \limsup_{\delta\downarrow 0} 
  \mI{\Cd\cap\dom}{f}{\lme} 
 \qmq{for all} f\in\cL^\infty(\dom)\,.
\end{equation}
\end{corollary}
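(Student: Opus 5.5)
Since $\sI{\ccl\C}{f}{\me}=\I{\dom}{f}{\me}$ by \reff{dm-e1} (Proposition~\ref{dm-s2e} gives $\me\in\Dens_\C$), it suffices to prove the two inequalities for $\I{\dom}{f}{\me}$. Applying the lower bound to $-f$ yields the upper bound, because $\liminf(-a_\delta)=-\limsup a_\delta$. So I only need
\begin{equation*}
  \liminf_{\delta\downarrow0}\mI{\Cd\cap\dom}{f}{\lme}\le\I{\dom}{f}{\me}\,.
\end{equation*}
Here $\mI{\Cd\cap\dom}{f}{\lme}=\frac{1}{\lme(\Cd\cap\dom)}\I{\Cd\cap\dom}{f}{\lme}$ is well defined because $\lme(\Cd\cap\dom)>0$ by \reff{dm-e3}.

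\emph{Simple functions.} Let $f=\sum_{k=1}^m a_k\chi_{A_k}$ with pairwise disjoint Borel sets $A_k$ covering $\dom$. The key point is that \reff{dm-s2e-2} controls each $\me(A_k)$ separately, while the liminf of a sum is not the sum of the liminfs. I will handle this by a trick rather than termwise bounds. Choose a constant $c$ such that $a_k+c\ge0$ for all $k$. Using $\me(\dom)=1$ and the normalization of the mean value, both sides shift by exactly $c$, so I may assume $a_k\ge0$.

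Order the coefficients as $0\le a_1\le\dots\le a_m$ and use the layer-cake representation $f=\sum_j (a_j-a_{j-1})\chi_{E_j}$ with $E_j=A_j\cup\dots\cup A_m$ and $a_0=0$. This still does not let me split the liminf.

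The better approach is to write
\begin{equation*}
  \mI{\Cd\cap\dom}{f}{\lme}=\sum_k a_k r_k(\delta), \qquad r_k(\delta):=\frac{\lme(A_k\cap\Cd\cap\dom)}{\lme(\Cd\cap\dom)}\in[0,1], \qquad \sum_k r_k(\delta)=1\,.
\end{equation*}
By compactness of $[0,1]^m$, pick a sequence $\delta_j\downarrow0$ with $r_k(\delta_j)\to\rho_k$ for every $k$, along which the mean values converge to their liminf $L$. Then $L=\sum_k a_k\rho_k$. Unfortunately \reff{dm-s2e-2} only gives $\liminf r_k\le\me(A_k)\le\limsup r_k$, which does not pin down $\rho_k$.

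\emph{Main obstacle.} As stated, the simple-function step genuinely needs more than \reff{dm-s2e-2} set by set. I must exploit that \reff{dm-s2e-2} holds for \emph{all} Borel sets, in particular for unions of the $A_k$. Let $\rho_k$ be the limits along the extracted subsequence, and set $S=\{k: \rho_k\le\me(A_k)\}$ and $A=\bigcup_{k\in S}A_k$. The upper bound in \reff{dm-s2e-2} for $A^c$ and the lower bound for $A$ yield comparisons of $\sum_{k\in S}\rho_k$ with $\me(A)$, but only along limsup/liminf, not along my subsequence.

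So the honest plan is the following. First try to show that for each fixed simple $f$ the inequality follows by applying \reff{dm-s2e-2} to the superlevel sets $E_j$ combined with the layer-cake formula and monotonicity. If this fails, the correct route is the one the paper likely intends: \reff{dm-s2e-2} is equivalent to $\me$ lying in the weak$^*$ closure (cluster set) of the family $\nu_\delta:=\lme(\cdot\cap\Cd\cap\dom)/\lme(\Cd\cap\dom)$ as $\delta\downarrow0$. One then verifies \reff{dm-s2-1} first for those cluster points, which works directly since $\langle\nu_\delta,f\rangle$ is exactly the mean value.

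\emph{Extension to all $f\in\cL^\infty(\dom)$.} Approximate $f$ uniformly by simple functions $f_j$ with $\|f-f_j\|_\infty<\eps$. Both the $\me$-integral and the $\lme$-mean values change by at most $\eps$, using $\|\me\|=1$ and the normalization. Letting $\eps\to0$ concludes the proof.
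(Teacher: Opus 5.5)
\textbf{There is a genuine gap: the step-function case is never proved.} Your reductions are sound: passing from $f$ to $-f$, shifting by constants using $\mu(\Omega)=1$, and the final uniform approximation. But the inequality for step functions, which is where all the content lies, is never established.

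Write $\nu_\delta:=\lambda(\cdot\cap C_\delta\cap\Omega)/\lambda(C_\delta\cap\Omega)$. The layer-cake attempt only gives $\int f\,d\mu=\sum_j(a_j-a_{j-1})\mu(E_j)\ge\sum_j(a_j-a_{j-1})\liminf_\delta\nu_\delta(E_j)$. The right-hand side is $\le$, not $\ge$, $\liminf_\delta\langle\nu_\delta,f\rangle$, so the chain proves nothing.

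Your fallback is not an argument either. Let $K$ be the set of weak$^*$ cluster points of $\nu_\delta$ as $\delta\downarrow0$. Then (\ref{dm-s2e-2}) only says that $\min_K\kappa(A)\le\mu(A)\le\max_K\kappa(A)$ for each $A$ separately. By contrast, (\ref{dm-s2-1}) is, by Hahn--Banach separation, equivalent to $\mu$ lying in the weak$^*$ closed convex hull of $K$. The equivalence you assert is therefore the whole claim, not a tool for proving it.

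The paper's proof passes over exactly this point with ``by linearity''. That is the inference you rightly refused: $\liminf$ is only superadditive and $\limsup$ only subadditive.

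\textbf{The gap cannot be closed for general $\lambda$, as the statement allows.} Here is a counterexample.
\begin{itemize}
\item Take $n=1$, $\Omega=(0,1)$, $C=\{0\}$, $x_k=2^{-k}$, and the disjoint intervals $I_k=(x_k-8^{-k},x_k)$. For odd $k$, split $I_k$ into Borel sets $I_k^2,I_k^3$, each of positive measure in every $(x_k-\varepsilon,x_k)$.
\item Let $\omega_k^1$ ($k$ even) and $\omega_k^2,\omega_k^3$ ($k$ odd) be $0$-$1$ measures with core $\{x_k\}$ and aura $I_k$, $I_k^2$, $I_k^3$ respectively. Such measures exist as extreme points of the corresponding sets of density measures (Theorem~\ref{ep-s2}), extended by zero.
\item Put $\theta_k=\omega_k^1$ for even $k$, $\theta_k=\frac12(\omega_k^2+\omega_k^3)$ for odd $k$, and $\lambda=\sum_k2^{-k^2}\theta_k$. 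Then $\lambda$ satisfies (\ref{dm-e3}).
\item Each $\theta_k$ lives just left of $x_k$. Hence $\nu_\delta$ is the normalized partial sum over $k\ge j(\delta):=\min\{k: x_k\le\delta\}$, and $\|\nu_\delta-\theta_{j(\delta)}\|\to0$. So every $\liminf$ and $\limsup$ in (\ref{dm-s2e-2}) and (\ref{dm-s2-1}) may be computed along $\theta_k$, $k\to\infty$.
\item Take free ultrafilters $\mathcal U_e,\mathcal U_o$ containing the even and the odd integers respectively, and set $\mu=\frac12\lim_{\mathcal U_e}\omega_k^1+\frac12\lim_{\mathcal U_o}\omega_k^2$ (setwise limits). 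Then $\mu\ge0$ takes only the values $0,\frac12,1$.
\item $\mu$ satisfies (\ref{dm-s2e-2}) for every Borel $A$. If $\mu(A)=1$ (resp.\ $0$), then $\theta_k(A)=1$ (resp.\ $0$) for infinitely many even $k$. If $\mu(A)=\frac12$, then either $\theta_k(A)=1$ for infinitely many even $k$ and $\theta_k(A)\le\frac12$ for infinitely many odd $k$, or $\theta_k(A)=0$ for infinitely many even $k$ and $\theta_k(A)\ge\frac12$ for infinitely many odd $k$.
\item Now let $f=\chi_{S_2}-\chi_{S_3}$ with $S_i=\bigcup_{k\ \mathrm{odd}}I_k^i$. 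Then $\langle\theta_k,f\rangle=0$ for all $k$, so both bounds in (\ref{dm-s2-1}) are $0$, whereas $\int_\Omega f\,d\mu=\frac12$.
\end{itemize}

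\textbf{What works for $\sigma$-additive $\lambda$.} This covers e.g.\ $\lambda=\mathcal L^n$ near $C$, the case used in Section~\ref{app-pr}. There the missing idea is a nonatomic splitting argument rather than linearity.
\begin{itemize}
\item Take $f=\sum_i a_i\chi_{A_i}$ with $a_i\in[0,1]$. Choose shells $\{t_{m+1}\le\mathrm{dist}_C<t_m\}$ with $\lambda(C_{t_m}\cap\Omega)\le(1+\varepsilon)\lambda(C_{t_{m+1}}\cap\Omega)$.
\item Shell by shell, cut each $A_i$ into $R$ pieces of equal $\lambda$-mass.
\item Let $A$ be the union of about $a_iR$ cyclically consecutive pieces of each $A_i$. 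Then $\nu_\delta(A)\le\langle\nu_\delta,f\rangle+O(\varepsilon+1/R)$ for all small $\delta$.
\item Averaging over the cyclic shifts, using only finite additivity and positivity of $\mu$, gives such an $A$ with $\mu(A)\ge\langle\mu,f\rangle-O(1/R)$.
\item The upper bound in (\ref{dm-s2e-2}) applied to this $A$ then yields the upper bound in (\ref{dm-s2-1}) for $f$.
\end{itemize}
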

\noi
Notice that \reff{dm-s2e-2} follows from \reff{dm-s2-1} with $f=\chi_A$.

\begin{proof}
\reff{dm-s2e-2} directly gives \reff{dm-s2-1} for $f=\chi_A$ and, by
multiplying with $-1$, also for $f=-\chi_A$. Then, by
linearity, \reff{dm-s2-1} is valid for all step functions 
(having a range with finitely many values). Since any 
$f\in\cL^\infty(\dom)$ can be uniformly approximated by step functions, the
assertion follows. 
\end{proof}

Let us now consider the existence of density measures. 
It turns out that every measure $\lme\in\bawl{\dom}$ satisfying
\reff{dm-e3} gives a density measure. This shows that a large variety of density
measures exist. 

\begin{proposition} \label{dm-s2}     
Let $\dom\in\bor{\R^n}$, let $\C\subset \cl{\dom}$ be a density set, and let
$\lme\in\bawl{\dom}$ satisfy \reff{dm-e3}. 
Then there exists a $\lme$-density $\me\in\Dens_\C$. 
\end{proposition}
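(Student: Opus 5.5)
We want $\me\in\bawl{\dom}$ satisfying \reff{dm-s2e-2}. By Proposition~\ref{dm-s2e} such a $\me$ automatically lies in $\Dens_\C$, so only existence has to be shown. We use the duality \reff{pl-e6} and produce $\me$ as a positive linear functional on $\cL^\infty(\dom)$ via the Hahn--Banach theorem. The sublinear functional is
\begin{equation*}
  p(f) := \limsup_{\delta\downarrow 0} \mI{\Cd\cap\dom}{f}{\lme}\,, \qquad f\in\cL^\infty(\dom)\,.
\end{equation*}

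First we check that $p$ is well-defined, finite and sublinear. The mean value integral makes sense since $0<\lme(\Cd\cap\dom)\le\|\lme\|<\infty$ by \reff{dm-e3}. Since $\lme\wac\lem$, functions agreeing $\lem$-a.e. have the same $\lme$-integral, so $p$ is defined on equivalence classes. The bound $|\mI{\Cd\cap\dom}{f}{\lme}|\le\|f\|_\infty$ gives $|p(f)|\le\|f\|_\infty$. Subadditivity and positive homogeneity follow from linearity of the integral and subadditivity of $\limsup$.

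Next apply Hahn--Banach: starting from the zero functional on $\{0\}$, we obtain a linear functional $L$ on $\cL^\infty(\dom)$ with $L\le p$. Then
\begin{equation*}
  -p(-f)\le L(f)\le p(f)\,, \qquad\text{where}\qquad -p(-f)=\liminf_{\delta\downarrow0}\mI{\Cd\cap\dom}{f}{\lme}\,,
\end{equation*}
and $|L(f)|\le\|f\|_\infty$. Hence $L$ is bounded, and \reff{pl-e6} gives $\me\in\bawl{\dom}$ with $L(f)=\I{\dom}{f}{\me}$.

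Finally take $f=\chi_A$ for $A\in\cB(\dom)$. Then $\I{\dom}{\chi_A}{\me}=\me(A)$ and
\begin{equation*}
  \mI{\Cd\cap\dom}{\chi_A}{\lme}=\frac{\lme(A\cap\dom\cap\C_\delta)}{\lme(\dom\cap\C_\delta)}\,,
\end{equation*}
so the two-sided bound on $L$ is exactly \reff{dm-s2e-2}. Proposition~\ref{dm-s2e} then yields $\me\in\Dens_\C$ and $\me\wac\lme$. The argument has no genuinely hard step; the only point that needs care is that $p$ is finite and respects the $\lem$-a.e. equivalence classes, which is what \reff{dm-e3} and $\lme\wac\lem$ provide.
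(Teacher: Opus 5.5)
Your proof is correct and is essentially the paper's argument: the same sublinear functional $p$, a Hahn--Banach extension dominated by $p$, the representation via \reff{pl-e6}, and then $f=\chi_A$ together with Proposition~\ref{dm-s2e}. The only difference is cosmetic. The paper extends the limit functional from the subspace where $\lim_{\delta\downarrow 0}$ of the averages exists, while you extend the zero functional on $\{0\}$; this changes nothing, since any linear $L\le p$ automatically equals that limit wherever it exists.
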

\noi
Let us discuss the result before giving the proof after
Remark~\ref{dm-s2f}. By Corollary~\ref{dm-s2g} we have that $\me$
satisfies \reff{dm-s2-1}. The existence of $\mu$ 
relies on the Hahn-Banach Theorem, where it is not 
uniquely determined by \reff{dm-s2-1}. This non-uniqueness can also be seen
directly if we choose a suitable disjoint decomposition 
$\dom=\dom_1\cup\dom_2$ and apply the proposition to $\dom_1$ and $\dom_2$
separately. Then the zero extensions of the measures obtained differ. 

We can apply Proposition~\ref{dm-s2} to $\lme=\reme{\lem}{\dom}$ if 
$\lem(\dom)<\infty$. More generally, we can take  
the zero extension of $\reme{\lem}{(\dom\cap\C_{\tilde\delta})}$ as $\lme$
if it is bounded for some $\tilde\delta>0$. 

\begin{corollary} \label{dm-s2c}
Let $\dom\in\bor{\R^n}$, let $\C\subset \cl{\dom}$ be a density set,
and let
\begin{equation}\label{dm-s2c-0}
  \lem(\dom\cap\C_{\tilde\delta}) <\infty
  \qmq{for some} \tilde\delta>0\,.
\end{equation}
Then there exists an $\lem$-density $\dens_\C\in\Dens_\C$. 
\end{corollary}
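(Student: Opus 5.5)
The plan is to derive Corollary~\ref{dm-s2c} directly from Proposition~\ref{dm-s2}. This requires a suitable measure $\lme\in\bawl{\dom}$ satisfying \reff{dm-e3} whose $\lme$-densities coincide with $\lem$-densities in the sense of \reff{dm-s2e-2} with $\lem$ in place of $\lme$. By \reff{dm-s2c-0} we fix $\tilde\delta>0$ with $\lem(\dom\cap\C_{\tilde\delta})<\infty$. We then take $\lme$ to be the zero extension to $\dom$ of $\reme{\lem}{(\dom\cap\C_{\tilde\delta})}$:
\begin{equation*}
  \lme(A) := \lem(A\cap\dom\cap\C_{\tilde\delta}) \qmq{for all} A\in\cB(\dom)\,.
\end{equation*}
This is the choice suggested in the remark preceding the corollary.

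Next we check the hypotheses of Proposition~\ref{dm-s2}. Clearly $\lme$ is a finite, positive $\sigma$-measure bounded by $\lem(\dom\cap\C_{\tilde\delta})<\infty$, so $\lme\in\bab$. Moreover $\lem(A)=0$ implies $\lme(A)=0$, so $\lme\wac\lem$ and hence $\lme\in\bawl{\dom}$. For \reff{dm-e3} we need $\lme(\Cd\cap\dom)>0$ for all $\delta>0$. For $\delta\le\tilde\delta$ we have $\Cd\subset\C_{\tilde\delta}$, so $\lme(\Cd\cap\dom)=\lem(\Cd\cap\dom)>0$ by \reff{dm-e0b}, since $\C$ is a density set. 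For $\delta>\tilde\delta$ the value is at least $\lme(\C_{\tilde\delta}\cap\dom)>0$ by monotonicity.

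Proposition~\ref{dm-s2} now yields a $\lme$-density $\me\in\Dens_\C$, which we denote by $\dens_\C$. It remains to see that it is an $\lem$-density. Since the quotients in \reff{dm-s2e-2} only involve $\liminf$ and $\limsup$ as $\delta\downarrow0$, we may restrict to $0<\delta<\tilde\delta$. For such $\delta$ we have $\lme(A\cap\dom\cap\Cd)=\lem(A\cap\dom\cap\Cd)$ and $\lme(\dom\cap\Cd)=\lem(\dom\cap\Cd)$, and the latter is finite and positive. Hence the bounds in \reff{dm-s2e-2} for $\lme$ coincide with those for $\lem$, and $\dens_\C$ is an $\lem$-density at $\C$.

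There is no real obstacle here. The only points needing care are that $\lem$ itself may be unbounded on $\dom$, so it is not in $\bawl{\dom}$ and one must localize, and that the ratios are only considered for small $\delta$, where the localization does not change them.
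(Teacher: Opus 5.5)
Your proposal is correct and follows the paper's own argument, which obtains the corollary by applying Proposition~\ref{dm-s2} to the zero extension of $\lem$ restricted to $\dom\cap\C_{\tilde\delta}$, exactly as you do. Your extra checks fill in details the paper leaves implicit: condition \reff{dm-e3} follows from \reff{dm-e0b}, and the density quotients for this measure and for $\lem$ coincide once $\delta<\tilde\delta$.
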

\noi
Notice that, as in the previous proposition, the verified density 
$\dens_\C$ is not unique.
For $\C=\{x\}$, the corollary verifies the existence 
of a density measure $\dens_x$ as considered in Example~\ref{pl-s2}. 
We also obtain directly the next remark.

\begin{remark} \label{dm-s2f} 
We have that $\Dens_\C\ne\emptyset$ for bounded density sets $\C$.
\end{remark}

\begin{proof+}{ of Proposition~\ref{dm-s2}}    
Let $\lme \in \bawl{\dom}$ satisfy \reff{dm-e3}.
For $X := \libld{\dom}$ we consider $p:X\to\R$ with 
\begin{equation*}
  p(f) = \limsup_{\delta \downarrow 0 }
         \mI{\dnhd{\C}{\delta} \cap \dom}{f}{\lme}
\end{equation*}
which is well defined, positively homogeneous and subadditive.
Then  
\begin{equation*}
X_0 := \Big\{f \in X \:\Big|\: 
\lim_{\delta\downarrow 0} \mI{\dnhd{\C}{\delta} \cap \dom}{f}{\lme}
\;\text{ exists}\Big\} 
\end{equation*}
is a linear subspace of $X$. Clearly, $f_0^*:X_0\to\R$ with
\begin{equation*}
  f_0^*(f) = \lim_{\delta\downarrow 0}
  \mI{\dnhd{\C}{\delta} \cap \dom}{f}{\lme}
\end{equation*}
is linear and bounded from above by $p$.  
The Hahn-Banach Theorem
yields a linear extension $f^*$ of $f_0^*$ onto $X$ such that
\begin{equation*}
\df{f^*}{f} \leq p(f) \leq
\|f\|_\infty  \limsup_{\delta \downarrow 0 }
         \mI{\dnhd{\C}{\delta} \cap \dom}{}{\lme} = \|f\|_\infty
\qmq{for all} f\in X\,.
\end{equation*}
Hence, $f^*\in \libld{\dom}^*$. By \reff{pl-e6} there is some
$\me \in \bawl{\dom}$ with 
\begin{equation}\label{dm-s2-5}
  \df{f^*}{f} = \I{\dom}{f}{\me} \z
  \Big( \le \limsup\limits_{\delta \downarrow 0}
	\mI{\dnhd{\C}{\delta} \cap \dom}{f}{\lme} \:  \Big)
  \qmq{for all} f \in \libld{\dom}  \,.
\end{equation}
For every $f \in \libld{\dom}$ we have 
\begin{equation*}
	\I{\dom}{-\fun}{\me} \leq p(-f) = \limsup\limits_{\delta \downarrow 0}
	\mI{\dnhd{\C}{\delta} \cap \dom}{- \fun}{\lme} 
\end{equation*}
and, consequently,
\begin{equation}\label{dm-s2-6}
\liminf\limits_{\delta \downarrow 0}
\mI{\dnhd{\C}{\delta} \cap \dom}{\fun}{\lme} \leq \I{\dom}{f}{\me} 	\,.
\end{equation}
Thus, for every $\bals\in \bor{\dom}$,
\begin{equation*}
0 \leq \liminf\limits_{\delta \downarrow 0}
\mI{\dnhd{\C}{\delta} \cap \dom}{\chi_{\bals}}{\lme} \leq \me(\bals) \,.
\end{equation*}
Hence, $\me \geq 0$. Moreover  
\begin{equation*}
	1 = \liminf\limits_{\delta \downarrow 0}
	\mI{\dnhd{\C}{\delta} \cap \dom}{\chi_{\dom}}{\lme} \leq
	\me(\dom) \leq \limsup\limits_{\delta
	\downarrow 0}
	\mI{\dnhd{\C}{\delta} \cap \dom}{\chi_{\dom}}{\lme} = 1 
\end{equation*}	
and, for any $\tilde{\delta} > 0$,
\begin{equation*}
	1 = \liminf\limits_{\delta \downarrow 0}
	\mI{\dnhd{\C}{\delta} \cap \dom}{\chi_{\dnhd{\C}{\tilde{\delta}}
	\cap \dom}}{\lme} \leq
	\me(\dnhd{\C}{\tilde{\delta}} \cap \dom) \leq \limsup\limits_{\delta
	\downarrow 0}
	\mI{\dnhd{\C}{\delta} \cap \dom}{\chi_{\dnhd{\C}{\tilde{\delta}}
	\cap \dom}}{\lme} = 1 \,.
\end{equation*}
Thus, $\me$ is a density measure at $\C$ on $\dom$. 
From \reff{dm-s2-5} and \reff{dm-s2-6} we obtain \reff{dm-s2-1} and, with
$f=\chi_A$, also \reff{dm-s2e-2}. Thus $\me$ is a $\lme$-density at $\C$
and $\mu\in\Dens_\C$ by Proposition~\ref{dm-s2e}.
\end{proof+}

The special case where $\C=\{x\}$ for some $x\in\cl\dom$ plays an
important role in our further investigations. For $\Dens_x\ne\emptyset$
we need that $x$ belongs to  
\begin{equation} \label{cdom}
  \cdom := \{y\in\cl\dom\mid \lem(\dom\cap B_\delta(y))>0
              \zmz{for all} \delta>0\}\, \qquad
\end{equation}
(cf. \reff{dm-e0b}).
The next example indicates how Corollary~\ref{dm-s2c} provides a large
variety of density measures in $\Dens_\C$.

\begin{example} \label{dm-s2b} 
Let $\dom\in\bor{\R^n}$, let $\C$ satisfy \reff{dm-e0a},  
and let $\dom'\in\cB(\dom)$ have a 
cusp at some $x\in\C\cap\cdom$ such that
\begin{equation*}
  \lem(B_\delta(x) \cap \dom') > 0 \zmz{for all} \delta>0\,.
\end{equation*}
(cf. Figure~\ref{fig:cusp_dens}).
By Corollary~\ref{dm-s2c} with $\dom'$ instead of $\dom$, we 
obtain an $\lem$-density $\dens_x^{\dom'}$ on $\dom'$ at $x$, that we identify
with its zero extension on $\dom$. 
Then $\dens_x^{\dom'}$ is a density measure on $\dom$ at
$x$, i.e. we have $\dens_x^{\dom'}\in\Dens_x$ with aura $\dom'$. 
Clearly, the selection of different sets $\dom'$ that are pairwise disjoint near
$x$ gives different density measures at $x$. In addition we can vary 
$x\in\C$. This way we obtain a huge amount of measures in $\Dens_\C$. 

\begin{figure}[H]
	\centering
	\begin{tikzpicture}[scale=1.1]
		\draw (2,0) to[out=180,in=0] (0,1);
		\draw (0,1) to[out=180,in=90] (-1,0);
		\draw (-1,0) to[out=-90,in=180] (0,-1);
		\draw (0,-1) to[out=0,in=180] (2,0);
		\draw (0,0) node {$\dom'$};
                \draw (1,1) node {$\dom$};
	    
                \draw[line width=2pt] (2,0) circle [radius = 0.03];  
		\draw (2.2,0) node {$x$};
		\draw[dashed] (2,0) circle [radius = 1.0];
		\draw[dashed] (2,0) circle [radius = 0.5];
\end{tikzpicture}
\caption{Existence of a density measure at a cusp.}\label{fig:cusp_dens} 
\end{figure}
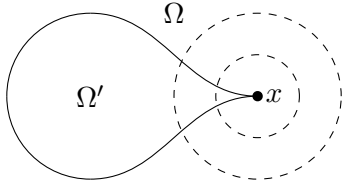
\end{example}

Let us now provide a characterization of the density measures in $\Dens_\C$.
Here we in particular show that $\me\in\bawl{\dom}$ already satisfies
\reff{dm-s2-1} if one of the two inequalities is satisfied.  

\begin{theorem}  \label{dm-s3}  
Let $\dom\in\bor{\R^n}$, let $\C\subset\cl{\dom}$ be a density set, and let 
$\me \in \bawl{\dom}$. 
Then $\me\in\Dens_\C$ if and only if there is some 
$\lme\in\bawl{\dom}$ satisfying \reff{dm-e3}  
such that
\begin{equation} \label{dm-s3-1}
\sI{\ccl\C}{f}{\me} \le \limsup_{\delta\downarrow 0} 
\mI{\Cd\cap\dom}{f}{\lme} \qmq{for all}
f\in\cL^\infty(\dom)\,.
\end{equation}
In this case \reff{dm-s3-1} implies
\begin{equation} \label{dm-s3-2}
\liminf_{\delta\downarrow 0} \mI{\Cd\cap\dom}{f}{\lme}
\le \sI{\ccl\C}{f}{\me} \qmq{for all}
f\in\cL^\infty(\dom)\,.
\end{equation}
\end{theorem}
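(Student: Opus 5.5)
The argument splits into the two directions plus the supplementary inequality \reff{dm-s3-2}. Throughout, by Theorem~\ref{dm-s1} and \reff{pl-e1}, the integral $\sI{\ccl\C}{f}{\me}$ coincides with $\I{\dom}{f}{\me}$ for every $\me\in\Dens_\C$. For the ``if'' direction the core of $\me$ is not yet known, so I would read $\sI{\ccl\C}{f}{\me}$ as $\I{\dom}{f}{\me}$ from the start. That is the intended meaning, and it is consistent once we have shown $\me\in\Dens_\C$.

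\emph{Necessity.} Let $\me\in\Dens_\C$ and choose $\lme:=\me$. Then \reff{dm-e3} holds, since $\me\ge0$ and $\me(\Cd\cap\dom)=1>0$. Moreover
\begin{equation*}
  \mI{\Cd\cap\dom}{f}{\me}=\I{\Cd\cap\dom}{f}{\me}=\I{\dom}{f}{\me}
\end{equation*}
for every $\delta>0$, because $\me(\dom\setminus\Cd)=0$ and $\me(\Cd\cap\dom)=1$. So the limit exists and equals $\sI{\ccl\C}{f}{\me}$ by \reff{dm-e1}. Hence \reff{dm-s3-1} holds, in fact with equality.

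\emph{Sufficiency and \reff{dm-s3-2}.} Assume \reff{dm-s3-1} for some $\lme$ satisfying \reff{dm-e3}. First derive \reff{dm-s3-2} by applying \reff{dm-s3-1} to $-f$ and using linearity of both integrals:
\begin{equation*}
  -\I{\dom}{f}{\me}\le\limsup_{\delta\downarrow0}\Big(-\mI{\Cd\cap\dom}{f}{\lme}\Big)
  =-\liminf_{\delta\downarrow0}\mI{\Cd\cap\dom}{f}{\lme}.
\end{equation*}
So both inequalities hold, exactly as in the proof of Proposition~\ref{dm-s2}. Now verify \reff{dm-e0} with the same test functions used there:
\begin{itemize}
\item $f=\chi_B$ gives $\me(B)\ge\liminf\ge0$, so $\me\ge0$.
\item $f=\chi_\dom$ gives $\me(\dom)=1$.
\item $f=\chi_{\C_{\tilde\delta}\cap\dom}$ gives $\me(\C_{\tilde\delta}\cap\dom)=1$, because for $\delta<\tilde\delta$ the mean value $\mI{\Cd\cap\dom}{\chi_{\C_{\tilde\delta}\cap\dom}}{\lme}$ equals $1$.
\end{itemize}
The mean values in these steps are well defined because $\lme(\Cd\cap\dom)>0$ by \reff{dm-e3}. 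Hence $\me\in\Dens_\C$. With Theorem~\ref{dm-s1} the integral over $\ccl\C$ is then legitimately the integral over $\dom$, so \reff{dm-s3-2} holds in the stated form.

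There is no real obstacle here. The only delicate point is the interpretation of $\sI{\ccl\C}{f}{\me}$ before $\cor\me\subset\ccl\C$ is established, which is handled as described above.
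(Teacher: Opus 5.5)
Your proposal is correct and is essentially the paper's proof. For necessity you take $\lme:=\me$ and use \reff{dm-e1}; for sufficiency you apply \reff{dm-s3-1} to $-f$ and test with characteristic functions, exactly as in the proof of Proposition~\ref{dm-s2} after \reff{dm-s2-5}. The only differences are cosmetic: the paper concludes $\me\in\Dens_\C$ via \reff{dm-s2e-2} and Proposition~\ref{dm-s2e} rather than checking \reff{dm-e0} directly, and it uses the reading of $\sI{\ccl\C}{f}{\me}$ as $\I{\dom}{f}{\me}$ without the comment you make explicit.
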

\noi
The statement remains true if we interchange \reff{dm-s3-1} and \reff{dm-s3-2}.
If $\me\in\Dens_\C$ is given, then, by \reff{dm-e1}, we can choose 
$\lme=\me$ for the implication in the proposition. 

\begin{proof}   
Let $\me \in \bawl{\dom}$, let $\lme \in \bawl{\dom}$ satisfy \reff{dm-e3},
and let \reff{dm-s3-1} be satisfied. 
Using $\I{\dom}{f}{\me}=\sI{\ccl\C}{f}{\me}$, we can argue as in the
proof of Proposition~\ref{dm-s2} after \reff{dm-s2-5} to get 
\reff{dm-s3-2} and, with $f=\chi_A$, also \reff{dm-s2e-2}.
Hence $\me\in\Dens_\C$ by Proposition~\ref{dm-s2e}.

Now let $\me\in \Dens_\C$. Then $\lme := \me$ satisfies \reff{dm-e3}.
From \reff{dm-e1} we get 
\begin{equation*}
\sI{\ccl\C}{\fun}{\me} = 
\lim_{\delta \downarrow 0} \mI{\dnhd{\C}{\delta}\cap \dom}{\fun}{\me} \leq 
\limsup_{\delta\downarrow 0} \mI{\dnhd{\C}{\delta}\cap \dom}{\fun}{\lme} 
\qmq{for all} f\in\libld{\dom}
\end{equation*}
which completes the proof. 
\end{proof}

After that characterization of density measures we are looking for estimates
of the related integrals. 
For each measure $\me\in \bawl{\dom}$ one has the standard estimate
\begin{equation*}
  \I{B}{f}{\me} \le |\me|(B) \: \essup{B}{f} \qmq{for}
  f\in\cL^\infty(\dom)\,, \; B\in\cB(\dom)\,.
\end{equation*}
If $\me$ is a density measure at $\C$, then the estimate with 
any neighborhood $\C_\delta$ of $\C$ instead of $B$ is of special relevance. 
But we are interested in a more refined version. 
If $f\in\cL^\infty(\dom)$ and if $\C\subset\cl\dom$ is a density set,
we have that  
\begin{equation*}
  \essup{\Cd\cap\dom}{f} \qmq{and} \essinf{\Cd\cap\dom}{f}
\end{equation*}
are both monotone in $\delta>0$. Thus 
the {\it essential supremum} and {\it essential infimum} of $f$ {\it near}
$\C$ given by 
\begin{eqnarray*}
  \sC f 
&:=&
  \lim_{\delta\downarrow 0}\, \essup{\Cd\cap\dom}{f} =
  \inf_{\delta>0}\, \essup{\Cd\cap\dom}{f}  \qmq{and}  \\
  \iC f 
&:=&
  \lim_{\delta\downarrow 0}\, \essinf{\Cd\cap\dom}{f} =
  \sup_{\delta>0}\, \essinf{\Cd\cap\dom}{f}   \,,
\end{eqnarray*}
respectively, are well-defined for $f\in\cL^\infty(\dom)$. 
For $\C=\{x\}$ the quantities 
\begin{equation*}
  \ix f \qmq{and} \sx f
\end{equation*}
could also be considered 
as essential limit inferior or superior of $f$ as $y\to x$.
Now we can give a refined estimate for integrals. 

\begin{proposition} \label{dm-s4}   \label{dm-s4} 
Let $\dom\in \bor{\R^n}$, let $\C\subset\cl{\dom}$ be a density set,
and let $\me\in\Dens_\C$
be a density measure at $\C$. Then 
\begin{equation} \label{dm-s4-1}
\iC f \, \le \, \sI{\ccl\C}{f}{\me} \, \le \, \sC f 
\qmq{for all} \f\in\cL^\infty(\dom)     \,.
\end{equation}
\end{proposition}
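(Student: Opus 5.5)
The plan is to combine the localization formula \reff{dm-e1} from Theorem~\ref{dm-s1} with the standard estimate $\I{B}{f}{\me}\le|\me|(B)\,\essup{B}{f}$ for $\me\in\bawl{\dom}$, and then pass to the limit $\delta\downarrow 0$. Fix $f\in\cL^\infty(\dom)$ and $\me\in\Dens_\C$. By \reff{dm-e1} we have
\begin{equation*}
  \sI{\ccl\C}{f}{\me} = \I{\dom}{f}{\me} = \I{\Cd\cap\dom}{f}{\me}
  \qquad\text{for every } \delta>0 .
\end{equation*}
The second equality also follows directly from $\me(\dom\setminus\Cd)=0$ and $\me\ge0$.

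For the upper bound, apply the standard estimate with $B=\Cd\cap\dom$ and use $|\me|(\Cd\cap\dom)=\me(\Cd\cap\dom)=1$ from \reff{dm-e0}. This gives
\begin{equation*}
  \sI{\ccl\C}{f}{\me}\le \essup{\Cd\cap\dom}{f} \qquad\text{for all } \delta>0 .
\end{equation*}
Taking the infimum over $\delta>0$, which by definition equals $\sC f$, yields the right inequality in \reff{dm-s4-1}.

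For the lower bound, apply the same argument to $-f$. This gives $-\sI{\ccl\C}{f}{\me}\le \essup{\Cd\cap\dom}{(-f)} = -\essinf{\Cd\cap\dom}{f}$, hence $\essinf{\Cd\cap\dom}{f}\le\sI{\ccl\C}{f}{\me}$ for all $\delta>0$. Taking the supremum over $\delta$ gives $\iC f$.

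The only point I would check is that the standard estimate applies even though the essential supremum is taken with respect to $\lem$ rather than $\me$. This holds because $\me\wac\lem$: the set where $f>\essup{B}{f}$ is $\lem$-null, hence $\me$-null. So $f\le \essup{B}{f}$ holds i.m. $\me$ on $B$, and monotonicity of the integral for positive $\me$ gives the bound. Everything else is routine.
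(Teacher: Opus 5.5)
Your proposal is correct and follows essentially the same route as the paper. The paper bounds $\int_{\C_\delta\cap\dom} f\,d\mu$ by $\operatorname{ess\,sup}_{\C_\delta\cap\dom} f$ via \eqref{dm-e1}, lets $\delta\downarrow 0$, and treats the lower bound ``analogously''. You use the per-$\delta$ equality and take the infimum, and you spell out the lower bound via $-f$ and the role of $\mu\in\operatorname{ba}(\dom,\mathcal{B}(\dom),\mathcal{L}^n)$; these are harmless refinements.
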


\begin{proof}   
From \reff{dm-e1} we readily get
\begin{equation*}
  \sI{\ccl\C}{f}{\me} = \lim_{\delta\downarrow 0}\I{\Cd\cap\dom}{f}{\me} \le
  \limsup_{\delta\downarrow 0}
  \I{\Cd\cap\dom}{\essup{\Cd\cap\dom} f}{\me} 
  = \lim_{\delta\downarrow 0}\, \essup{\Cd\cap\dom} f \,.
\end{equation*}
The other inequality follows analogously. 
\end{proof}

It turns out that the inequalities in \reff{dm-s4-1} are sharp for bounded
density sets $\C$.   

\begin{theorem} \label{dm-s5}    
Let $\dom\in\bor{\R^n}$ and let $\C\subset\cl{\dom}$ be a bounded density set. 
Then we have for 
all $f\in\cL^\infty(\dom)$ that
\begin{equation}\label{dm-s5-1}
\inf_{\me \in \Dens_\C} \sI{\ccl\C}{f}{\me} = \iC f \quad\qmq{and}\quad
\sup_{\me \in \Dens_\C} \sI{\ccl\C}{f}{\me} = \sC f \,.
\end{equation}
\end{theorem}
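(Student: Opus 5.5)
By Proposition~\ref{dm-s4}, every $\me\in\Dens_\C$ satisfies $\iC f\le\sI{\ccl\C}{f}{\me}\le\sC f$. Hence in \reff{dm-s5-1} the infimum is $\ge\iC f$ and the supremum is $\le\sC f$. It therefore suffices to prove the statement for the supremum, i.e.\ to find density measures whose integrals come arbitrarily close to $\sC f$, or reach it. The statement for the infimum then follows by applying the supremum statement to $-f$, since $\iC f=-\sC(-f)$ and the integral is linear.

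Fix $f\in\cL^\infty(\dom)$ and put $s:=\sC f$. For $\eps>0$ and $\delta>0$ consider
\begin{equation*}
  A_{\eps,\delta}:=\{y\in\Cd\cap\dom\mid f(y)>s-\eps\}\,.
\end{equation*}
By definition of $\sC f$ as an infimum of essential suprema, $\essup{\Cd\cap\dom}f\ge s$ for every $\delta$. Therefore $\lem(A_{\eps,\delta})>0$ for all $\delta>0$. Now fix $\eps>0$ and set
\begin{equation*}
  A_\eps:=\{y\in\dom\mid f(y)>s-\eps\}\,.
\end{equation*}
Let $\lme$ be the restriction $\reme{\lem}{(A_\eps\cap\C_1)}$, extended by zero to $\dom$. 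This measure is bounded because $\C$ is bounded, so $\C_1$ is bounded. It lies in $\bawl{\dom}$, and it is $\ge0$ with $\lme(\Cd\cap\dom)=\lem(A_{\eps,\delta})>0$ for $\delta\le1$, hence for all $\delta$ by monotonicity. Thus $\lme$ satisfies \reff{dm-e3}. Proposition~\ref{dm-s2} then yields a $\lme$-density $\me_\eps\in\Dens_\C$. By Corollary~\ref{dm-s2g},
\begin{equation*}
  \sI{\ccl\C}{f}{\me_\eps}\ge\liminf_{\delta\downarrow0}\mI{\Cd\cap\dom}{f}{\lme}\ge s-\eps\,,
\end{equation*}
because $f>s-\eps$ on the set where $\lme$ lives. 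Letting $\eps\downarrow0$ gives $\sup_{\me\in\Dens_\C}\sI{\ccl\C}{f}{\me}\ge s$, and together with Proposition~\ref{dm-s4} this proves equality.

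The only delicate step is checking that this auxiliary $\lme$ really satisfies \reff{dm-e3}. This needs two ingredients. First, the positivity $\lem(A_{\eps,\delta})>0$ for every $\delta$, which comes from the definition of $\sC f$. Second, the boundedness of $\lme$, which is exactly where the hypothesis that $\C$ is bounded is used; without it $\lem(\C_1\cap\dom)$ could be infinite. If one wants the supremum to be attained, one can take a weak$^*$ cluster point of the family $(\me_\eps)$. This exists by the weak$^*$ compactness of $\Dens_\C$ in Theorem~\ref{dm-s1a}, and the map $\me\mapsto\I{\dom}{f}{\me}$ is weak$^*$ continuous, so the cluster point satisfies $\I{\dom}{f}{\me}\ge s$ and hence equals $s$. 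This is not required for \reff{dm-s5-1}.
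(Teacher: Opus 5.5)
Your proof is correct and follows essentially the paper's argument. For each $\eps>0$ you build a density measure living on the superlevel set $\{f>\sC f-\eps\}$, bound its integral from below by $\sC f-\eps$, and conclude with the upper bound of Proposition~\ref{dm-s4}, handling the infimum via $-f$. The one difference is packaging: you apply Proposition~\ref{dm-s2} on the fixed $\dom$, with $\lme$ the restriction of Lebesgue measure to $A_\eps\cap\C_1$, together with Corollary~\ref{dm-s2g}, whereas the paper applies Corollary~\ref{dm-s2c} on the subdomain $M_\eps$ and zero-extends. Your version has the minor advantage of not needing $\C\subset\cl M_\eps$, which that use of Corollary~\ref{dm-s2c} formally requires and which can fail.
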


\begin{proof}   
For $f\in\libld{\dom}$ and $\eps>0$ we set
\begin{equation*}
M_\eps := \{x\in\dom \setpipe f(x) \ge 
\lim_{\delta \downarrow0}\essup{\dnhd{\C}{\delta} \cap \dom}{\fun} - \eps\}\,.
\end{equation*}
Clearly $\C_\de\cap M_\ep$ is bounded and 
\begin{equation*}
\lem(\C_{\delta}\cap M_\eps) > 0  \zmz{for all} \delta>0 \,.
\end{equation*}
By Corollary~\ref{dm-s2c} with $M_\eps$ instead of $\dom$, 
there is some $\lem$-density $\me_\eps$ on $M_\eps$ at $\C$, that we identify
with its zero extension on $\dom$. Thus we obtain  
a density measure $\mu_\eps\in\Dens_\C$ with aura  
$M_\eps$. Then, by \reff{dm-e1},
\begin{equation*}
\sI{\ccl\C}{f}{\me_\eps} = 
\lim_{\delta\downarrow 0} \mI{\C_\delta\cap\dom}{f}{\me_\eps} =
\lim_{\delta\downarrow 0} \mI{\C_\delta\cap M_\eps}{f}{\me_\eps}
\geq \lim \limits_{\delta\downarrow 0}
\essup{\dnhd{\C}{\delta} \cap \dom}{\fun} - \eps \,.
\end{equation*}
Consequently,
\begin{equation*}
\sup_{\me \in \Dens_\C} \sI{\ccl\C}{f}{\me} \ge
\sup_{\eps > 0} \sI{\ccl\C}{f}{\me_\eps} \ge
\lim \limits_{\delta\downarrow 0}
\essup{\dnhd{\C}{\delta} \cap \dom}{\fun} \,.
\end{equation*}
From Proposition \ref{dm-s4} we get
\begin{equation*}
\sup_{\me \in \Dens_\C} \sI{\C}{f}{\me}
\leq \lim \limits_{\delta\downarrow 0}
\essup{\dnhd{\C}{\delta} \cap \dom}{\fun} 
\end{equation*}
which implies the second equality of \reff{dm-s5-1}.
The other one 
follows analogously.
\end{proof}

In order to characterize the action of $\Dens_\C$ on a 
fixed essentially bounded function, we define 
\begin{equation*}
  \df{\Dens_\C}{f} := \big\{ \df{\me}{f} \:\big|\: \me\in\Dens_\C \big\}\,.
\end{equation*}

\begin{proposition} \label{dm-s7} 
Let $\dom\in \bor{\R^n}$, let $\C\subset\cl{\dom}$ be a density set, 
let $\Dens_\C\ne\emptyset$, and assume that
$f\in\cL^\infty(\dom)$. Then 
\begin{equation*}
\df{\Dens_\C}{f} = 
\big[\, \iC f, \sC f \,\big] \,.
\end{equation*}
\end{proposition}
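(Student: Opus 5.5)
The inclusion $\df{\Dens_\C}{f}\subset[\iC f,\sC f]$ is exactly Proposition~\ref{dm-s4}. For the reverse inclusion the plan is to use convexity: by Theorem~\ref{dm-s1a}, $\Dens_\C$ is convex, and $\mu\mapsto\df{\mu}{f}$ is linear. Hence $\df{\Dens_\C}{f}$ is a convex subset of $\R$, i.e. an interval. It then remains to show that both endpoints $\iC f$ and $\sC f$ are attained.

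To get the endpoints I would use weak$^*$ compactness rather than the boundedness assumption of Theorem~\ref{dm-s5}, since here $\C$ may be unbounded and we only know $\Dens_\C\ne\emptyset$. By Theorem~\ref{dm-s1a}, $\Dens_\C$ is weak$^*$ compact, and $\mu\mapsto\df{\mu}{f}$ is weak$^*$ continuous. So the sup and inf over $\Dens_\C$ are attained, and $\df{\Dens_\C}{f}$ is a compact interval $[m,M]$. It remains to show $M=\sC f$; the case $m=\iC f$ follows by replacing $f$ with $-f$.

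For $M\ge \sC f$ I would redo the proof of Theorem~\ref{dm-s5} without boundedness. Fix $\eps>0$ and let
\[
M_\eps:=\{x\in\dom\mid f(x)\ge \sC f-\eps\}.
\]
Then $\lem(\C_\delta\cap M_\eps)>0$ for all $\delta>0$. Instead of Corollary~\ref{dm-s2c}, which needs finite Lebesgue measure near $\C$, I take some $\nu\in\Dens_\C$ (nonempty by assumption) and look for a positive $\lme\in\bawl{\dom}$ concentrated on $M_\eps$ with $\lme(\C_\delta\cap M_\eps)>0$ for all $\delta$.

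Choosing $\lme$ is the main obstacle. My first attempt is a weighted Lebesgue measure: $\lme=g\,\lem$ restricted to $M_\eps$, with an integrable weight $g>0$, for instance $g(x)=e^{-|x|^2}$. This $\lme$ is bounded and lies in $\bawl{\dom}$, and it satisfies \reff{dm-e3} relative to the set $M_\eps$ because $g>0$ everywhere and $\lem(\C_\delta\cap M_\eps)>0$. Proposition~\ref{dm-s2}, applied on $M_\eps$, then gives a $\lme$-density at $\C$ on $M_\eps$; its zero extension $\mu_\eps$ lies in $\Dens_\C$ and has aura $M_\eps$. In fact this makes the nonemptiness hypothesis unnecessary for this step.

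By \reff{dm-e1} applied to $\mu_\eps$,
\[
\df{\mu_\eps}{f}=\lim_{\delta\downarrow0}\I{\C_\delta\cap M_\eps}{f}{\mu_\eps}\ge \sC f-\eps .
\]
Letting $\eps\downarrow0$ gives $M\ge\sC f$. Combined with Proposition~\ref{dm-s4}, this yields $M=\sC f$ and hence the stated identity.
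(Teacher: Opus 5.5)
Your proof is correct and shares the paper's skeleton. You use Proposition~\ref{dm-s4} for the inclusion, convexity and weak$^*$ compactness of $\Dens_\C$ to get a compact interval, and the sets $M_\eps$ from the proof of Theorem~\ref{dm-s5} for the endpoints. The real difference is the reference measure you put into Proposition~\ref{dm-s2}.

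The paper simply cites Theorem~\ref{dm-s5}. Its proof uses the Lebesgue density of Corollary~\ref{dm-s2c} on $M_\eps$, which needs $\lem(\C_{\tilde\delta}\cap M_\eps)<\infty$, and the paper gets this from the boundedness of $\C$. Since the proposition is stated for arbitrary density sets with $\Dens_\C\ne\emptyset$, the paper's argument strictly covers only bounded $\C$. Your finite weight $e^{-|x|^2}$ keeps $\lme(\C_\delta\cap M_\eps)>0$ without any finiteness condition on $\C$, so you prove the statement in full generality. As you note, the same choice also shows $\Dens_\C\ne\emptyset$ for every density set. That makes this hypothesis superfluous, and likewise the boundedness assumption in Remark~\ref{dm-s2f} and Theorem~\ref{dm-s5}.

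Two cosmetic points. First, the $\nu\in\Dens_\C$ you pick is never used and should be dropped; restricting $\nu$ to $M_\eps$ would fail anyway, since $\nu(M_\eps)$ may vanish. Second, applying Proposition~\ref{dm-s2} on $M_\eps$ formally needs $\C\subset\cl{M_\eps}$. This can fail, e.g.\ when $\C$ consists of two points and $f$ is large near only one of them. It is cleaner to apply the proposition on $\dom$ itself with $\lme=\chi_{M_\eps}e^{-|x|^2}\lem$ and read off $\mu_\eps(\dom\setminus M_\eps)=0$ from \reff{dm-s2e-2}. The paper's proof of Theorem~\ref{dm-s5} has the same technicality.
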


\begin{proof}   
The mapping $\df{\cdot}{f}:\bawl{\dom}\to\R$ is linear. 
By $f\in\bawl{\dom}^*$ it is continuous and, thus, also weak$^*$ continuous. 
Hence it maps the convex weak$^*$ compact set $\df{\Dens_\C}{f}$ onto a convex
compact set in $\R$. Now Theorem~\ref{dm-s5} implies the assertion.
\end{proof}

A closed convex set $M\subset\R^m$ is uniquely determined by its 
{\it support function}
\begin{equation} \label{dm-supp}
  \omega_M(v) := \sup_{w\in M} \df{v}{w}  \qmq{for all} v\in \R^m\,
\end{equation}
(cf. \cite[p. 28]{clarke}).
For some fixed $F\in\cL^\infty(\dom,\R^m)$ ($=\cL^\infty(\dom)^m$)
we can now easily compute the support function of the closed convex set 
$\df{\Dens_\C}{F}\subset\R^m$.

\begin{corollary} \label{dm-s8}
Let $\dom\in \bor{\R^n}$, let $\C\subset\cl{\dom}$ be a density set, 
let $\Dens_\C\ne\emptyset$, and assume that
$F\in\cL^\infty(\dom,\R^m)$. Then the support function $\omega$ of 
$\df{\Dens_\C}{F}$ is given by
\begin{equation*}
  \omega(v) :=  \sC\, (F\cdot v) 
  \qmq{for all} v\in\R^m \,.
\end{equation*}
\end{corollary}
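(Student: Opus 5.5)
The plan is to reduce the vector-valued statement to the scalar Proposition~\ref{dm-s7} by pairing with a fixed direction $v$. Here $\df{\Dens_\C}{F}$ means $\{\df{\me}{F}\mid\me\in\Dens_\C\}\subset\R^m$, where $\df{\me}{F}$ is the vector with components $\df{\me}{F_i}=\I{\dom}{F_i}{\me}$.

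Fix $v\in\R^m$. For every $\me\in\Dens_\C$, linearity of the integral gives
\begin{equation*}
  \df{v}{\df{\me}{F}} = \sum_{i=1}^m v_i \I{\dom}{F_i}{\me} = \I{\dom}{F\cdot v}{\me} = \df{\me}{F\cdot v}\,.
\end{equation*}
The scalar function $F\cdot v$ belongs to $\cL^\infty(\dom)$, since it is a finite linear combination of the components $F_i$. Taking the supremum over $\me\in\Dens_\C$ in the identity above yields
\begin{equation*}
  \omega(v)=\sup_{w\in\df{\Dens_\C}{F}}\df{v}{w}=\sup\,\df{\Dens_\C}{F\cdot v}\,.
\end{equation*}
By Proposition~\ref{dm-s7}, applied to $f=F\cdot v$, the right-hand side is the maximum of the interval $[\iC(F\cdot v),\sC(F\cdot v)]$, namely $\sC(F\cdot v)$.

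For completeness we also note that $\df{\Dens_\C}{F}$ is indeed a closed convex set, so that it is determined by $\omega$. This follows as in the proof of Proposition~\ref{dm-s7}. The map $\me\mapsto\df{\me}{F}$ is linear and weak$^*$ continuous into $\R^m$, because each of its components is. Theorem~\ref{dm-s1a} says $\Dens_\C$ is convex and weak$^*$ compact, so its image is a convex compact subset of $\R^m$.

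There is no real obstacle. The only point needing care is the interchange $\df{v}{\df{\me}{F}}=\df{\me}{F\cdot v}$, and this is immediate from linearity of the integral for the finitely additive measure $\me$ on $\cL^\infty$.
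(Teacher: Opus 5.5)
Your proof is correct and follows the paper's argument. The paper also uses linearity to write $\omega(v)$ as the supremum of $\int F\cdot v\,d\mu$ over all density measures $\mu$ at $C$, and then applies the scalar sharpness result, citing (\ref{dm-s5-1}) of Theorem~\ref{dm-s5} directly where you go through Proposition~\ref{dm-s7}, which is itself derived from that theorem.
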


\begin{proof}
By definition of the support function of $\df{\Dens_\C}{F}$ we have
\begin{equation*}
  \omega(v) = \sup_{w\in\df{\Dens_\C}{F}} w\cdot v 
  = \sup_{\me\in\Dens_\C}\sI{\ccl\C}{F\cdot v}{\me} 
  \overset{\reff{dm-s5-1}}{=} \sC\, (F\cdot v) \, 
\end{equation*}
for all $v\in\R^m$.
\end{proof}

\section{Extreme points}
\label{ep}

Let $M$ be a set in a linear space. Then $u\in M$ is an 
{\it extreme point} of $M$ if for every $u_1,u_2\in M$ with $u_1\ne u_2$ 
and for all $\alpha\in[0,1]$ one has that
\begin{equation*}
  u=\alpha u_1+(1-\alpha)u_2 \qmq{implies} \alpha\in\{0,1\}\,.
\end{equation*}
The importance of extreme points follows from the Krein-Milman theorem 
(cf. \cite[p.~154]{eidelman}, \cite[p. 157]{zeidler-iii}).
It says that a compact convex set in a Hausdorff locally convex vector space
is the closure of the convex hull of its extreme points. 
Since $\Dens_\C$ is convex and weak$^*$ compact, the
theorem can be applied with the weak$^*$ closure. Thus we can consider 
$\Dens_\C$ to be spanned by its extreme points. 
Theorem~\ref{ep-s2} below gives a necessary and sufficient condition for a
density measure to be an extreme point. 

A nontrivial measure $\mu\in\bawl{\dom}$ with $\dom\in\cB(\R^n)$ is called 
{\it \zo measure} if 
\begin{equation} \label{ep-e1}
  \me(\bals) = 0 \zmz{or} \me(\bals)=1 
  \qmq{for every} \bals \in \bor{\dom} \,.
\end{equation}
These measures play an special role in $\bawl{\dom}$
(cf. Toland \cite[p. 64]{toland}).
It turns out that \zo measures localize similar as Dirac measures. In
particular they are density measures on $\dom$ at a single point. 

\begin{theorem}\label{ep-s3}  
Let $\dom \in \bor{\R^n}$ and let $\me\in\bawl{\dom}$ be a \zo measure. 
Then 
\begin{equation*}
  \cor\me=\{x\} \qmq{for some} x\in\ccl\dom\,.
\end{equation*}
Moreover $\mu$ is pure and, if $x\in\cl\dom$, we have $\me\in\Dens_x$.
\end{theorem}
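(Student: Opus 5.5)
Since $\me$ is nontrivial, \reff{pl-e0} gives $\cor\me\ne\emptyset$ with $\cor\me\subset\ccl\dom$. Note also that $\me\ge 0$: every value is $0$ or $1$, so $|\me|=\me$. Moreover $\me(\dom)=1$, because $\me(\dom)=0$ would force $\me(B)\le\me(\dom)=0$ for all $B$, i.e. $\me=0$.

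\emph{Uniqueness of the core point.} Assume $x,y\in\cor\me$ with $x\ne y$, points of $\ccl{\R^n}$. Choose disjoint open neighborhoods $U_x,U_y$ in $\ccl{\R^n}$; this works also when one point is $\infty$. By the definition \reff{pl-core} (with the compactification) we get $\me(U_x\cap\dom)>0$ and $\me(U_y\cap\dom)>0$, so both values equal $1$. Additivity on disjoint sets then gives $\me(\dom)\ge 2$, a contradiction. Hence $\cor\me=\{x\}$ for some $x\in\ccl\dom$.

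\emph{Purity.} By $\me\wac\lem$ we have $\me(\{x\})=0$ if $x\in\dom$, since $\lem(\{x\})=0$. Therefore $\lem(\dom\cap\cor\me)=0$, and $\me$ is pure by \reff{pl-e2}.

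\emph{Density measure.} Let $x\in\cl\dom$, so $x\ne\infty$ and $\C=\{x\}$ is closed, nonempty, with $\lem(\C\cap\dom)=0$. Fix $\de>0$. The ball $B_\de(x)$ is an open set in $\ccl{\R^n}$ containing $\cor\me$, so \reff{pl-e1} yields
\[
\me(B_\de(x)\cap\dom)=\me(\dom)=1 .
\]
Since $\me\wac\lem$, this also forces $\lem(B_\de(x)\cap\dom)>0$, so \reff{dm-e0b} holds and $\{x\}$ is a density set. Together with $\me\ge0$ this is exactly \reff{dm-e0}, hence $\me\in\Dens_x$.

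The only point needing care is the uniqueness step when $x$ or $y$ equals $\infty$, and the justification of \reff{pl-e1} in the compactified setting. Both are handled by the convention \reff{e-comp}, which is assumed from \cite{trace}.
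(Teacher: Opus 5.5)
Your proof is correct and follows essentially the same route as the paper: \reff{pl-e0} gives a nonempty core, two distinct core points with disjoint neighborhoods contradict $\me(\dom)=1$, purity comes from \reff{pl-e2}, and $\me(B_\delta(x)\cap\dom)=\me(\dom)=1$ gives $\me\in\Dens_x$. In the purity step the appeal to $\me\wac\lem$ and $\me(\{x\})=0$ is unnecessary, since $\lem(\dom\cap\cor\me)=0$ holds simply because the core is a single point.
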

\noi
For $x\in\cl\dom$ we set 
\begin{equation*}
  \ZO_x := \big\{\me\in\Dens_x \:\big|\: \mu \text{ is \zo measure} \big\}\,. 
\end{equation*}

\begin{proof}   
Since $\mu\ne 0$, we have by \reff{pl-e0} that $\cor\me\ne\emptyset$ 
and $\cor\me\subset\ccl\dom$.  
Assume that there are $x,y\in\cor{\me}$ with $x\ne y$. We choose  
$\delta>0$ such that $B_{\delta}(x)\cap B_{\delta}(y)=\emptyset$.
Then, without loss of generality,
\begin{equation*}
  \me(B_{\delta}(x)\cap\dom) = 1 \qmq{and} 
  \me(B_{\delta}(y)\cap\dom) \le \me\big(\dom\setminus 
  B_{\delta}(x)\big) = 0 \,, 
\end{equation*}
in contradiction to $y\in\cor{\me}$. Hence $\cor\mu$ is a singleton $\{x\}$ in
$\ccl\dom$ and $\me$ is pure by \reff{pl-e2}. 
If $x\in\cl\dom$, the definition of core gives 
\begin{equation*}
  \me(B_\delta(x)\cap\dom) = \me(\dom) = 1 \qmq{for all} \delta>0\,.
\end{equation*}
Since $\mu\ge 0$, we get $\me\in\Dens_x$.
\end{proof}

Let us characterize the extreme points of $\Dens_\C$ (cf. 
\cite{toland} for that of the unit sphere in $\bawl{\dom}$).

\begin{theorem} \label{ep-s2} 
Let $\dom \in \bor{\R^n}$, let $\C$ 
satisfy \reff{dm-e0a}, and let $\me \in \Dens_\C$.
Then $\me$ is an extreme point of $\Dens_\C$ if and only if
$\mu$ is a \zo measure.  
\end{theorem}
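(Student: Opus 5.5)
We prove the two implications separately; both rest on the fact that the restriction of a density measure to a suitable set, after rescaling, is again a density measure at $\C$.

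\emph{Extreme point implies \zo measure.} Let $\me\in\Dens_\C$ be an extreme point and assume, for contradiction, that $\me$ is not a \zo measure. Then there is some $B\in\bor\dom$ with $\alpha:=\me(B)\in(0,1)$, and we set $B^c:=\dom\setminus B$. Define
\begin{equation*}
  \me_1:=\tfrac{1}{\alpha}\,\reme{\me}{B}\,,\qquad
  \me_2:=\tfrac{1}{1-\alpha}\,\reme{\me}{B^c}\,,
\end{equation*}
so that $\me=\alpha\me_1+(1-\alpha)\me_2$. We check that $\me_1\in\Dens_\C$. It is positive and lies in $\bawl{\dom}$, since $\lem(A)=0$ gives $\me(A\cap B)=0$. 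Moreover $\me_1(\dom)=1$. Since $\me(\dom\setminus\Cd)=0$, we have $\me(B\cap\Cd)=\me(B)=\alpha$, hence $\me_1(\Cd\cap\dom)=1$ for all $\delta>0$. The same argument with $B^c$ in place of $B$ gives $\me_2\in\Dens_\C$. Finally $\me_1(B)=1\neq0=\me_2(B)$, so $\me_1\ne\me_2$. This writes $\me$ as a proper convex combination of two distinct elements of $\Dens_\C$, contradicting extremality.

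\emph{\zo measure implies extreme point.} Let $\me\in\Dens_\C$ be a \zo measure, and suppose $\me=\alpha\me_1+(1-\alpha)\me_2$ with $\me_1,\me_2\in\Dens_\C$ and $\alpha\in(0,1)$. We show $\me_1=\me_2$.
\begin{itemize}
  \item For $B\in\bor\dom$ with $\me(B)=0$: positivity of $\me_1,\me_2$ and $\alpha,1-\alpha>0$ force $\me_1(B)=\me_2(B)=0$.
  \item For $B$ with $\me(B)=1$: the complement satisfies $\me(\dom\setminus B)=0$, so by the first case $\me_i(\dom\setminus B)=0$, and additivity with $\me_i(\dom)=1$ gives $\me_i(B)=1$.
\end{itemize}
Hence $\me_1=\me_2=\me$ on every Borel set, so $\me$ is an extreme point.

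The argument is elementary. The only point needing care is verifying that the normalized restrictions $\me_1,\me_2$ satisfy \reff{dm-e0}. This is exactly where the equality $\me(\Cd\cap\dom)=\me(\dom)$, and hence $\me(B\cap\Cd)=\me(B)$, is needed. Weak absolute continuity with respect to $\lem$ must also be checked; it is inherited directly from $\me$.
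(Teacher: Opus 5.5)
Your proof is correct and is essentially the paper's argument. It uses the same positivity-and-complement reasoning for ``$0$--$1$ measure $\Rightarrow$ extreme point'' and the same decomposition $\mu=\mu(B)\mu_1+(1-\mu(B))\mu_2$ into normalized restrictions for the converse. The only cosmetic difference is that you verify $\mu_j(C_\delta\cap\Omega)=1$ via $\mu(\Omega\setminus C_\delta)=0$, whereas the paper reads it off the convex-combination identity using $\mu_j\le 1$.
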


\begin{proof} 
First let $\me\in\Dens_\C$ be a \zo measure. We assume that there are 
$\me_1,\me_2 \in \Dens_\C$ such that
\begin{equation}\label{ep-s2-5}
  \me = \alpha\me_1 + (1-\alpha)\me_2 \qmq{where} 
  \me_1\ne\me_2 \zmz{and} \alpha\in(0,1)\,.
\end{equation}
Then there is some $\bals \in \bor{\dom}$ with
\begin{equation} \label{ep-s2-6}
\me_1(\bals)\neq \me_2(\bals) \,.
\end{equation}
If $\me(\bals)=0$, then $\me_1(\bals) = \me_2(\bals) = 0$ by 
$\me_j\ge 0$, which contradicts \reff{ep-s2-6}.
Hence $\me(\bals) = 1$ and $\me(\dom\setminus B)=0$. Therefore
\begin{equation*}
	\me_1(\dom\setminus B) = \me_2(\dom\setminus B) = 0 
\end{equation*}
by \reff{ep-s2-5} and, thus, $\me_1(\bals) = \me_2(\bals) = 1$. 
This also contradicts \reff{ep-s2-6}. Hence \reff{ep-s2-5} must be wrong
and $\me$ is extreme point of $\Dens_\C$. 

Now let $\me$ be an extreme point of $\Dens_\C$. We assume that there is 
some $\bals \in \bor{\dom}$ with $\me(B)\in(0,1)$. By $\mu(\dom)=1$,
also $\me(\dom\setminus B)\in(0,1)$. We identify the measures 
\begin{equation*}
  \me_1 := \frac{1}{\me(\bals)}\reme{\me}{\bals}  \,, \quad
  \me_2 := \frac{1}{\me(\dom\setminus \bals)}\reme{\me}{(\dom\setminus\bals)} 
\end{equation*}
with their extension onto $\dom$ by zero. Clearly, $\me_1\ne\me_2$ and
$0\le\me_j\le 1$. Moreover
\begin{equation} \label{ep-s2-8}
  \me = \me(\bals) \me_1 + \me(\dom\setminus\bals) \me_2 =
  \me(\bals) \me_1 + \big(1-\mu(B)\big) \me_2 \,.
\end{equation}
Hence, for all $\delta>0$,
\begin{equation*}
  1 = \mu(\C_\delta\cap\dom) = 
  \me(\bals) \me_1(\C_\delta\cap\dom) + 
  \big(1-\mu(B)\big) \me_2(\C_\delta\cap\dom)\,.
\end{equation*}
By $\mu_j\le 1$ we get $\mu_j(\C_\delta\cap\dom)=1$ and, thus, 
$\mu_j\in\Dens_\C$. 
But this combined with \reff{ep-s2-8} is a 
contradiction, since $\me$ is extreme point of $\Dens_\C$. 
Therefore $\mu(B)\in\{0,1\}$ for all $B$ and the assertion follows. 
\end{proof}

\noi
For a Borel set $B$ whose boundary doesn't meet the core of \zo measure
$\me$, we easily get $\me(B)$ from Theorem~\ref{ep-s3} and the definition
of core.  

\begin{corollary} \label{ep-s4} 
Let $\dom\in\bor{\R^n}$, let $\me\in\ZO_x$ 
for some $x \in \cl{\dom}$, and let $B \in \bor{\dom}$. Then we have
\begin{equation*}
		\me(B) = \bigg\{
\begin{array}{ll} 1 &\mbox{ if } x\in \Int{B}\,, \\
			0 &\mbox{ if } x\in \Ext B\,.

\end{array}
\end{equation*}
\end{corollary}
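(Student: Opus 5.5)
The plan is to reduce both cases to the defining property of $\ZO_x$, namely $\me\in\Dens_x$ with $\me(B_\delta(x)\cap\dom)=\me(\dom)=1$ for every $\delta>0$, combined with monotonicity of the positive measure $\me$. Throughout, $B\in\bor\dom$, so $B\subset\dom$ and $B=B\cap\dom$.

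\emph{Case $x\in\Int B$.} Here I will pick $\delta>0$ with $B_\delta(x)\subset B$ (interior taken in $\R^n$). Then $B_\delta(x)\cap\dom\subset B$. Since $\me\ge0$, this gives
\begin{equation*}
  1=\me(B_\delta(x)\cap\dom)\le\me(B)\le\me(\dom)=1,
\end{equation*}
so $\me(B)=1$.

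\emph{Case $x\in\Ext B$.} Now I will pick $\delta>0$ with $B_\delta(x)\cap B=\emptyset$. Then $B\subset\dom\setminus B_\delta(x)$, and therefore
\begin{equation*}
  0\le\me(B)\le\me(\dom\setminus B_\delta(x))=\me(\dom)-\me(B_\delta(x)\cap\dom)=0 .
\end{equation*}
Alternatively, one can note that $x\notin\cl B$ and invoke \reff{pl-e1} with $U=B_\delta(x)$, since $\cor\me=\{x\}$ by Theorem~\ref{ep-s3}.

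The argument uses only $\me\in\Dens_x$, not the \zo property. The \zo property is what makes the dichotomy exhaustive away from $\bd B$, but the corollary does not claim anything for $x\in\bd B$, so nothing more is needed there. I expect no real obstacle. The only point needing care is to read the interior and exterior relative to $\R^n$ and to intersect the balls with $\dom$ consistently.
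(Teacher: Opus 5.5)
Your argument is correct and is essentially the paper's: the paper only remarks that the claim follows from Theorem~\ref{ep-s3} (the core of $\mu$ is $\{x\}$) together with the definition of the core. That means $\mu$ carries full mass on every $B_\delta(x)\cap\dom$, which you read off directly from $\mu\in\Dens_x$ before using monotonicity, and your side remark that only $\mu\in\Dens_x$ (not the $0$-$1$ property) is needed is also right.
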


Theorem~\ref{ep-s3} tells us that a \zo measure $\me$ lives on any Ball
around its core $\{x\}$.  
A more precise analysis shows that it concentrates near $x$
in direction of a ray emanating from~$x$.  
For $x\in\R^n$ we thus define the {\it open rotational cone with vertex at} 
$x\in\R^n$, {\it rotational axis} $v\in\R^n\setminus\{0\}$,    
and {\it opening angle} $2\alpha\in(0,\pi)$ by  
\begin{equation}\label{ep-e4}
  K(x,v,\alpha) := \big\{ y\in\R^n \:\big|\:
  y\ne x, \varangle (y-x,v) < \alpha \big\} \,  
\end{equation}
($\varangle$ denotes the angle between vectors).

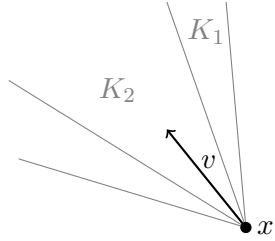
\begin{figure}[H]
	\centering
	\begin{tikzpicture}[scale=1.3]
		\draw (1.2,-1) node {$x$};
		\draw [thick,->] (1,-1) -- node[above] {$\,\vvec$} (0.2,0);
 		\draw[gray] (1,-1) to (0.8,1.3);
		\draw[gray] (1,-1) to (-1.3,-0.3);
		\draw[gray] (0.6,1.0) node {$K_1$};
		\draw[gray] (1,-1) to (0.2,1.3);
		\draw[gray] (1,-1) to (-1.4,0.5);
		\draw[gray] (-0.3,0.4) node {$K_2$};
                \draw[line width=2pt] (1,-1) circle [radius = 0.03];  
	\end{tikzpicture}
	\caption{Cones $K_j=K(x,v,\alpha_j)$ for a large angle $\alpha_1$ and
          a smaller angle $\alpha_2$.}\label{fig:ex_dens_cone}
\end{figure}

\begin{theorem} \label{ep-s5} 
Let $\dom \in \bor{\R^n}$ and let $\me\in\ZO_x$ 
for some $x\in\cl\dom$. 
Then there is a unique $v\in \R^n$ with $|v|=1$ such that 
\begin{equation} \label{ep-s5-1}
\me\big( K(x,v,\alpha)\cap \dom\big) = 1 \qmq{for all}
\alpha \in \big(0,\tfrac{\pi}{2}\big) \,.
\end{equation}
\end{theorem}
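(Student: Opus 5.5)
The plan is a compactness argument on the unit sphere $S^{n-1}$, using that $\me$ is a \zo measure with $\me(B_\de(x)\cap\dom)=1$ for all $\de>0$ and $\me(\{x\})=0$ (since $\me\in\Dens_x$ and $\me\wac\lem$). Define
\begin{equation*}
  V := \big\{ v\in S^{n-1} \:\big|\: \me\big(K(x,v,\alpha)\cap\dom\big)=1 \zmz{for all} \alpha\in(0,\tfrac{\pi}{2}) \big\}\,.
\end{equation*}
We must show that $V$ is a singleton.

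\emph{Existence.} Suppose $V=\emptyset$. Then for every $w\in S^{n-1}$ there is some $\alpha_w\in(0,\frac{\pi}{2})$ with $\me(K(x,w,\alpha_w)\cap\dom)=0$, since $\me$ takes only the values $0$ and $1$. The open spherical caps $\{u\in S^{n-1}\mid \varangle(u,w)<\alpha_w\}$ cover $S^{n-1}$, and by compactness finitely many of them, belonging to $w_1,\dots,w_k$, already cover it. Hence
\begin{equation*}
  \dom\setminus\{x\}\subset\bigcup_{j=1}^k K(x,w_j,\alpha_{w_j})\cap\dom\,,
\end{equation*}
and finite subadditivity of $\me$ gives $\me(\dom\setminus\{x\})=0$. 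Since $\me(\{x\})=0$, this yields $\me(\dom)=0$, contradicting $\me(\dom)=1$. Therefore $V\ne\emptyset$.

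\emph{Uniqueness.} Let $v\ne w$ both lie in $V$. Their angle $\beta=\varangle(v,w)$ is positive. Choose $\alpha<\min\{\beta/2,\pi/2\}$. Then $K(x,v,\alpha)$ and $K(x,w,\alpha)$ are disjoint: a point $y$ in both would satisfy $\varangle(y-x,v)+\varangle(y-x,w)<\beta$, which violates the triangle inequality for angles on the sphere. Finite additivity then gives $\me(\dom)\ge 1+1=2$, a contradiction.

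The main technical point is the existence step. It is what the compactness of $S^{n-1}$ is needed for, and it must be combined carefully with $\me(\{x\})=0$, because the cones exclude their vertex. This is exactly where the \zo property enters: it turns "not full measure" into "measure zero", so finitely many null cones can be summed. The other ingredients, the triangle inequality for angles and the covering of $\R^n\setminus\{x\}$ by the cones, are routine. Note also that the restriction $\alpha<\frac{\pi}{2}$ in \reff{ep-s5-1} causes no difficulty, because cones with smaller opening are contained in cones with larger opening, so $v\in V$ implies the statement for every admissible $\alpha$.
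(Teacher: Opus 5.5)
Your proof is correct and follows essentially the same route as the paper. Existence comes from covering the unit sphere by finitely many null cones (using the zero-one property and $\mu(\{x\})=0$), and uniqueness from the disjointness of sufficiently narrow cones around two distinct axes; you are only more explicit than the paper about why $\mu(\{x\})=0$ and why the cones are disjoint.
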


\noi
Since $\cor{\me}=\{x\}$, we have that all sets
\begin{equation*}
  K(x,v,\alpha)\cap\dom\cap B_\delta(x) \zmz{with}
  \alpha \in \big(0,\tfrac{\pi}{2}\big)\,, \z \delta>0
\end{equation*}
are aura of $\me$. This readily implies
that \zo measures with different directions $v$ in \reff{ep-s5-1} cannot
agree. A further description of \zo measures by ultrafilters
can be found in \cite{toland}. 

\begin{proof} 
Let us fix $\me\in\ZO_x$ for some $x\in\ol\dom$   
and set
$S^n := \bd B_1(0)$. For contradiction we assume that, 
for all $v\in S^n$,
there is some $\alpha_v\in\big(0,\frac{\pi}{2}\big)$ such that
\begin{equation*}
\me\big(K(x,v,\alpha_v)\cap \dom\big) = 0 \,.
\end{equation*}
Since $x+S^n$ is compact and since it is covered by the 
$K(x,v,\alpha_v)$, there is a finite cover 
\begin{equation*}
  K(x,v_k,\alpha_{v_k}) \qmq{for} k=1,\dots,l\,. 
\end{equation*}
Certainly,
\begin{equation*}
  \R^n\setminus\{x\} \subset \bigcup_{k=1}^l K(x,v_k,\alpha_{v_k}) \,.
\end{equation*}
Hence,
\begin{equation*}
  \me(\dom) = \me\big(\dom\setminus\{x\}\big) \le
  \sum_{k=1}^l \me\big(K(x,v_k,\alpha_{v_k}) \cap \dom\big) = 0 \,,
\end{equation*}
which contradicts $\me(\dom)=1$. Consequently, there is some $v\in S^n$ 
satisfying \reff{ep-s5-1}. Since $\mu$ cannot be $1$ on two disjoint cones, $v$
has to be unique. 
\end{proof}

\section{Precise representative}
\label{app-pr}

Let $\dom\subset\bor{\R^n}$. 
Then we have for a function $f\in L^1_{\rm loc}(\dom)$ that
\begin{equation} \label{app-e0}
  \lim_{\de\downarrow 0} \:\mI{B_\de(x)\cap\dom}{|f-f(x)|}{\lem} = 0 \,
\end{equation}
for $\lem$-a.e. $x\in\cl\dom$ 
(cf. \cite[p. 44]{evans} with $\mu=\reme{\lem}{\dom}$).
We call $x\in\dom$ {\it Lebesgue point} of $f$ if \reff{app-e0} is satisfied.
Notice that Lebesgue points $x$ belong to $\cdom$. For a class 
$f\in\cL^1_{\rm loc}(\dom)$ we thus have for $\lem$-a.e. $x\in\cl\dom$ that
\begin{equation}\label{app-e0a}
  \lim_{\de\downarrow 0} \:\mI{B_\de(x)\cap\dom}{|f-\alpha|}{\lem} = 0 
\end{equation}
for some $\alpha\in\R$. Sometimes $\alpha$ satisfying 
\reff{app-e0a} is said to be the approximate limit of $f$ at $x$
(cf. \cite[p.~160]{ambrosio}). However we want to use a weaker version
(cf. \cite[p.~46]{evans}). 
We call $\alpha\in\R$ {\it approximate limit} of $f\in\cL^1_{\rm loc}(\dom)$ 
at $x\in\cdom$ (with respect to~$\dom$) if
\begin{equation*} \label{dm-app}
  \lim_{\delta\downarrow 0} 
  \frac{\lem\big(B_\delta^\dom(x)\cap\{|f-\alpha|>\eps\}\big)}
       {\lem(B_\delta^\dom(x))}
  = 0 \qmq{for all} \eps >0\,
\end{equation*} 
and we write
\begin{equation*}
  \alim_{y\to x} f(y)=\alpha\,.
\end{equation*}
Let us start with some integrability conditions for integrals related to
density measures. 

\begin{theorem} \label{app-s1}
Let $\dom\in\bor{\R^n}$, let $f\in\cL^1_{\rm loc}(\dom)$, 
let $x\in\cdom$, 
let $\dens_x^\dom$ be an $\lem$-density in $\LDens_x$, and let $\alpha\in\R$. 
\bgl
\item We have that 
  \begin{equation*}
    \reff{app-e0a} \qmq{implies} \alim_{y\to x} f(y) = \alpha
  \end{equation*}
and the converse is true if $f\in\cL^\infty_{\rm loc}(\dom)$.

\item
If we have that 
\begin{equation}\label{app-s1-2}
  \limsup_{\de\downarrow 0} \:\mI{B_\de(x)\cap\dom}{|f|}{\lem} < \infty\,, 
\end{equation}
then $f$ is $dens_x^\dom$-integrable.

\item 
If $\alim_{y\to x} f(y) = \alpha$, 
then $f=\alpha$ i.m. $\dens_x^\dom$ and $f$ is $\dens_x^\dom$-integrable with
\begin{equation*}
  \sI{x}{f}{\dens_x^\dom} = \alpha\,.
\end{equation*}

\item
If \reff{app-e0a} is satisfied, 
then $f$ is $dens_x^\dom$-integrable with
\begin{equation*}\label{app-s1-3}
  \alpha = \sI{x}{f}{\dens_x^\dom} =
  \lim_{\de\downarrow 0} \:\mI{B_\de(x)\cap\dom}{f}{\lem} =
  \alim_{y\to x} f(y)  \,.
\end{equation*}
\el
\end{theorem}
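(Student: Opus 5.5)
The plan is to work directly from the definition of the integral with respect to a finitely additive measure. A function is $\mu$-integrable if it is the limit in measure of simple functions that are Cauchy in $L^1(\mu)$. Throughout, the only property of $\dens_x^\dom$ I intend to use is that it is an $\lem$-density, i.e.\ it satisfies \reff{dm-s2e-2} with $\lme=\lem$ and $\C=\{x\}$. In particular, for every $A\in\cB(\dom)$,
\begin{equation*}
  \dens_x^\dom(A)\le \limsup_{\de\downarrow0}\frac{\lem(A\cap B^\dom_\de(x))}{\lem(B^\dom_\de(x))}\,,
\end{equation*}
and the same estimate holds for integrals of bounded functions by Corollary~\ref{dm-s2g}.

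For (a) I use Chebyshev's inequality on $B^\dom_\de(x)$:
\begin{equation*}
  \frac{\lem\big(B^\dom_\de(x)\cap\{|f-\alpha|>\eps\}\big)}{\lem(B^\dom_\de(x))}\le\frac1\eps\,\mI{B_\de(x)\cap\dom}{|f-\alpha|}{\lem}\,,
\end{equation*}
so \reff{app-e0a} gives the approximate limit. For the converse, if $|f|\le M$ a.e.\ near $x$, I split the mean of $|f-\alpha|$ into the part where $|f-\alpha|\le\eps$ and the part where $|f-\alpha|>\eps$. This bounds it by $\eps+(M+|\alpha|)$ times the density ratio of $\{|f-\alpha|>\eps\}$, which tends to $\eps$.

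For (c), applying the displayed upper bound to $A=\{|f-\alpha|>\eps\}$ gives $\dens_x^\dom(A)=0$ for every $\eps>0$. Since $\dens_x^\dom\ge0$ and $A$ is Borel, outer and inner measure agree, so $f=\alpha$ i.m. Consequently the constant simple functions $f_k\equiv\alpha$ converge to $f$ in measure and are trivially Cauchy in $L^1$. Hence $f$ is integrable with $\I{\dom}{f}{\dens_x^\dom}=\alpha\,\dens_x^\dom(\dom)=\alpha$, and the notation $\sI{x}{\cdot}{\cdot}$ is justified by \reff{dm-e1}.

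Part (d) then follows by combining (a) and (c). The additional equality with $\lim_\de\mI{B_\de(x)\cap\dom}{f}{\lem}$ comes from
\begin{equation*}
  \Big|\mI{B_\de(x)\cap\dom}{f}{\lem}-\alpha\Big|\le\mI{B_\de(x)\cap\dom}{|f-\alpha|}{\lem}\to0\,.
\end{equation*}

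For (b), let $M$ denote the finite $\limsup$ in \reff{app-s1-2}, and work with the truncations $T_kf:=\max(-k,\min(f,k))\in\cL^\infty$. Chebyshev together with the density estimate gives
\begin{equation*}
  \dens_x^\dom(\{|f|>k\})\le M/k\,,
\end{equation*}
so $T_kf\to f$ in measure, and $f$ is $\dens_x^\dom$-measurable because each $T_kf$ is. By Corollary~\ref{dm-s2g}, the integrals $\I{\dom}{T_k|f|}{\dens_x^\dom}$ are bounded by $M$ and increasing in $k$.

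The main obstacle is upgrading this to $L^1$-Cauchyness of the approximating sequence, or equivalently to integrability of $|f|$. I would first try to use the monotone-convergence-type integrability criterion for nonnegative measurable functions from \cite{trace}: measurability together with $\sup_k\int T_k|f|\,d\mu<\infty$. If that criterion is not available in that form, I would estimate $\int|T_lf-T_kf|\,d\dens_x^\dom$ for $l>k$ via Corollary~\ref{dm-s2g} by $\limsup_\de$ of the mean of $(\min(|f|,l)-k)^+$. I would then try to exploit that $\dens_x^\dom$ gives mass at most $M/k$ to $\{|f|>k\}$ so that this tail becomes small. I expect this tail control, where uniform integrability near $x$ is not automatic, to be the delicate point, and it may need the finer result cited from \cite{trace}.
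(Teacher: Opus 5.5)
Your arguments for (1), (3) and (4) are sound. They are self-contained versions of what the paper simply cites: the proof of Proposition~3.65 in \cite{ambrosio} for (1), and Proposition~2.15 of \cite{trace} for (2)--(4).

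The gap is in (2). You stop exactly where the argument has to be closed, and your fallback route would fail. Set $\nu:=\dens_x^\dom$. Bounding $\I{\dom}{|T_lf-T_kf|}{\nu}$ through Corollary~\ref{dm-s2g} by $\limsup_{\de\downarrow0}\mI{B_\de(x)\cap\dom}{(\min(|f|,l)-k)^+}{\lem}$ discards too much: under \reff{app-s1-2} alone this quantity need not become small as $k\to\infty$. The mass bound $\nu(\{|f|>k\})\le M/k$ does not help either, since it only gives $(l-k)M/k$. For a counterexample, take $\dom=(0,1)$ and $x=0$. Let $f=h_j$ on a set $S_j\subset(2^{-j-1},2^{-j})$ with $\lem(S_j)=2^{-j-1}/h_j$, and $f=0$ elsewhere, where $(h_j)$ runs through every power of $2$ infinitely often. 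Then all means of $f$ over $(0,\de)$ are at most $2$. But whenever $h_m=2k$, the mean of $(\min(f,2k)-k)^+$ over $(0,2^{-m})$ is at least $\frac14$. So there is no tail control near $x$, and, as it turns out, none is needed.

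The missing idea is that linearity of $\nu$ on bounded functions already closes the argument. For $l>k$ you have pointwise $|T_lf-T_kf|=\min(|f|,l)-\min(|f|,k)$. All these functions lie in $\cL^\infty(\dom)$, hence
\[
  \I{\dom}{|T_lf-T_kf|}{\nu} = I_l-I_k\,, \qquad I_k:=\I{\dom}{\min(|f|,k)}{\nu}\,.
\]
You already observed that $(I_k)$ is nondecreasing and bounded by $M$. So it converges, and $I_l-I_k\to0$ as $k,l\to\infty$.

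Now choose simple $s_k$ with $\sup|s_k-T_kf|\le\frac1k$. By your estimate $\nu(\{|f|>k\})\le M/k$, the $s_k$ converge to $f$ in measure $\nu$. Moreover $\I{\dom}{|s_k-s_l|}{\nu}\le I_l-I_k+\frac1k+\frac1l\to0$. Hence $f$ is $\nu$-integrable, with $\I{\dom}{|f|}{\nu}\le M$.

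So the monotone-convergence criterion you hoped to import from \cite{trace} does hold in the finitely additive setting, with this two-line proof. What makes it work is that the truncations converge to $f$ in measure.
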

\noi
In \cite[p. 160-162]{ambrosio} the first assertion can be found 
for open $\dom$ and $x\in\dom$. There is also an example given that shows 
that the converse statement may fail for unbounded $f$. Assertions 
(2)-(4) can be found in \cite{trace} where the integrals in (3) and (4) 
are independent of the
special choice of the $\lem$-density $\dens_x^\dom$.  

\begin{proof}
For (1) we can argue analogously to the proof of Proposition~3.65 in
\cite{ambrosio}. (2)-(4) follow from 
Proposition~2.15 in \cite{trace}.
\end{proof}
\noi
Notice that the existence of the approximate limit 
$\alim_{y\to x}f$ does not imply \reff{app-s1-2}
(cf. Example~2.18 in \cite{trace}). Otherwise, the next example shows that
\reff{app-s1-2} does not imply the existence of $\alim_{y\to x}f$.

\begin{example}\label{app-s1b}
Let $\dom=B_1(0)\subset\R^2$. We use polar coordinates $(r,\beta)$ (radius
$r>0$, angle $\beta\in[0,2\pi)$) and define some $f\in\cL^1(\dom)$ by
\begin{equation*}
  f(r,\beta) = \tfrac{1}{\sqrt\beta} \,.
\end{equation*}
Then
\begin{equation*}
  \mI{B_\delta(0)}{f}{\cL^2} = \tfrac{1}{\pi\delta^2}
  \int_0^\delta\int_0^{2\pi}\tfrac{r}{\sqrt\beta}  \,d\beta\,dr =
  \tfrac{2\sqrt{2\pi}}{\pi\delta^2}\int_0^\delta r\,dr =
  \sqrt{\tfrac{2}{\pi}}
\end{equation*}
and, thus, \reff{app-s1-2} is satisfied. However, we readily get for any
$\alpha\in\R$ and $\eps>0$ that 
\begin{equation*}
\lim_{\delta\downarrow 0}
  \frac{\lem\big(B_\delta(0)\cap\{|f-\alpha|>\eps\}\big)}
       {\lem(B_\delta(0))} \ge
  \lim_{\delta\downarrow 0}
  \frac{\lem\big(B_\delta(0)\cap\{f>\alpha+\eps\}\big)}
       {\lem(B_\delta(0))} > 0\,.
\end{equation*}
Therefore, $\alim_{y\to x}f$ does not exist.
\end{example}

By \reff{app-e0}, we have for $f\in L^1_{\rm loc}(\dom)$ 
that \reff{app-e0a} is satisfied  
with $\alpha=f(x)$ for $\lem$-a.e. $x\in\cl\dom$.
Hence,
\begin{equation*} \label{app-e3}
  f(x) = 
  \sI{x}{f}{\dens_x^\dom} =
  \lim_{\de\downarrow 0} \:\mI{B_\de(x)\cap\dom}{f}{\lem}
  \quad \zmz{$\lem$-a.e. on} \cl\dom \,
\end{equation*}
for any $\dens_x^\dom\in\LDens_x$.
This motivates the definition of a {\it precise representative} for 
$f\in\cL^1_{\rm loc}(\dom)$ on $\cl\dom$ either as usual as limit of averages by
\begin{equation}\label{app-s1-5}
  f^\star(x) := \Bigg\{
  \begin{array}{ll}
    \lim\limits_{\de\downarrow 0} \:\mI{B_\de(x)\cap\dom}{f}{\lem} & 
    \zmz{if the limit exists,}  \\[2pt]
    0 & \zmz{otherwise} 
  \end{array}
\end{equation}
(cf. \cite[p. 46]{evans}) or as integral
representation with some  
$\dens_x^\dom\in\LDens_x$ by
\begin{equation}\label{app-s1-6}
  \pr{f}(x) := \bigg\{
  \begin{array}{ll}
    \sI{x}{f}{\dens_x^\dom} & 
    \zmz{if the integral exists,}  \\[2pt]
    0 & \zmz{otherwise.} 
  \end{array}
\end{equation}
Notice that the precise representative is usually only defined for open $\dom$
and $x\in\dom$. We readily see 
that the existence of the limit in \reff{app-s1-5} and of the integral
in \reff{app-s1-6} requires $x\in\cdom$. Theorem~\ref{app-s1} implies
that
\begin{equation}\label{app-e4}
  f^\star(x)=\pr{f}(x) \qmq{if \reff{app-e0a} is satisfied for $x$
  with some $\alpha$} 
\end{equation}
and, for any $\dens_x^\dom\in\LDens_x$,
\begin{equation} \label{app-e4a}
  \alim_{y\to x} f(y)=\alpha  \qmq{implies}
  \pr f(x)=\sI{x}{f}{\dens_x^\dom}=\alpha  \,,
\end{equation}
i.e. $\pr f(x)$ is independent of $\dens_x^\dom$ if the approximate limit at
$x$ exists.

Thus $f^\star$ and $\pr{f}$ can differ at most at points $x\in\dom$ where 
\reff{app-e0a} fails, 
which is an $\lem$-null set. At these points $x$ it is possible that
both the limit in  
\reff{app-s1-5} and the integral in \reff{app-s1-6} exist and are 
equal, that both exist but differ, that only one of them exists, or that 
neither exist (cf. \cite[Remark 2.16]{trace} for examples).
In particular, different choices of $\dens_x^\dom$ in \reff{app-s1-6} can
influence the value $\pr{f}(x)$, but at most for $x$ where $\alim_{y\to x} f(y)$
does not exist. If $f\in \cL^\infty(\dom)$, then 
the integral in \reff{app-s1-6} exists for all $x\in\cdom$ 
while it might depend on the choice of 
$\dens_x^\dom$ for $x$ of an $\lem$-null
set. If $f$ is continuous at $x$, 
then clearly $\pr f(x)=f^\star(x)=f(x)$. The advantage of $\pr f$ is  that an
integral calculus is available, which can simplify various arguments. 

For a refined analysis we define
the {\it upper} and {\it lower  approximate limit} of 
$f\in\cL^1_{\rm loc}(\dom)$ at $x\in\cdom$ by
\begin{eqnarray*}
  f^s(x) \::=\: \alimsup_{y\to x} f(y) 
&:=&  
  \inf\bigg\{ \alpha'\in\R\:\Big|\: \lim_{\delta\downarrow 0}
  \frac{\lem\big(B_\delta^\dom(x)\cap\{f>\alpha'\}\big)}
       {\lem(B_\delta^\dom(x))} = 0 \,\bigg\} \,, \\
  f^i(x)\: :=\:\aliminf_{y\to x} f(y)\, 
&:=&  
  \sup\bigg\{ \alpha'\in\R\:\Big|\: \lim_{\delta\downarrow 0}
  \frac{\lem\big(B_\delta^\dom(x)\cap\{f<\alpha'\}\big)}
       {\lem(B_\delta^\dom(x))} = 0 \,\bigg\} \,.
\end{eqnarray*}
Both limits can attain $\pm\infty$ and, clearly,
\begin{equation*}
  -\infty \le f^i(x) \le f^s(x) \le \infty \qmq{for all} x\in\cdom\,.
\end{equation*}
For $\alpha\in\R$ we readily obtain that
\begin{equation}\label{app-e8}
  \alim_{y\to x} f(y)=\alpha \qmq{if and only if}
  f^i(x)=f^s(x)=\alpha \,.
\end{equation}
The next statement gives a further integrability condition. 

\begin{proposition} \label{app-s1a}
Let $\dom\subset\cB(\R^n)$, let $x\in\cdom$, 
let $\dens_x^\dom\in\LDens_x$, and let 
$f\in\cL^1_{\rm loc}(\dom)$. If $f^i(x)$ and $f^s(x)$ are finite, then
\begin{equation} \label{app-s1a-1}
  f^i(x)\le f \le f^s(x) \zmz{\rm i.m. $\dens_x^\dom$\,.} \quad
\end{equation}
Moreover, $f$ is $\dens_x^\dom$ integrable with
\begin{equation} \label{app-s1a-2}
  f^i(x)\le \sI{x}{f}{\dens_x^\dom} \le f^s(x) \,.
\end{equation}
\end{proposition}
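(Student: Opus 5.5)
The plan is to derive \reff{app-s1a-1} directly from the definition of $f^i(x)$ and $f^s(x)$ together with the defining inequality \reff{dm-s2e-2} of an $\lem$-density. Then we truncate $f$ so that only bounded functions are integrated, which brings in the theory of Section~\ref{dm}.

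\emph{Step 1: \reff{app-s1a-1}.} Fix $\eps>0$ and set $\alpha'=f^s(x)+\eps$. By definition of the infimum there is some $\alpha''<\alpha'$ with vanishing density of $\{f>\alpha''\}$ at $x$. Since $\{f>\alpha'\}\subset\{f>\alpha''\}$, also
\[
  \lim_{\delta\downarrow 0}\frac{\lem(B^\dom_\delta(x)\cap\{f>f^s(x)+\eps\})}{\lem(B^\dom_\delta(x))}=0 .
\]
By the upper bound in \reff{dm-s2e-2} with $\lme=\reme{\lem}{\dom}$ (localized near $x$), we obtain $\dens_x^\dom(\{f>f^s(x)+\eps\})=0$. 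We choose a Borel representative of $f$, so the outer measure agrees with the measure, and the choice of representative is irrelevant by $\dens_x^\dom\wac\lem$. Hence $f\le f^s(x)$ i.m. $\dens_x^\dom$. The lower bound $f^i(x)\le f$ i.m. follows symmetrically using $f^i(x)-\eps$.

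\emph{Step 2: integrability.} Let $a:=f^i(x)$, $b:=f^s(x)$, and define the truncation $g:=\max\{a-1,\min\{f,b+1\}\}\in\cL^\infty(\dom)$. By Step~1, the set $\{f\ne g\}\subset\{f>b+1\}\cup\{f<a-1\}$ has $\dens_x^\dom$-measure zero, so $f=g$ i.m. $\dens_x^\dom$. Since $g$ is bounded, it is integrable by \reff{pl-e6}. Since the integral respects equality in measure (the integral of a function vanishing i.m. is zero, and measurability and integrability are defined via convergence in measure), $f$ is $\dens_x^\dom$-integrable with $\sI{x}{f}{\dens_x^\dom}=\sI{x}{g}{\dens_x^\dom}$.

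\emph{Step 3: \reff{app-s1a-2}.} For each $\eps>0$ we have $a-\eps\le g\le b+\eps$ i.m. $\dens_x^\dom$. Monotonicity of the integral for inequalities holding i.m., together with $\dens_x^\dom(\dom)=1$, gives $a-\eps\le\sI{x}{g}{\dens_x^\dom}\le b+\eps$; letting $\eps\downarrow0$ yields \reff{app-s1a-2}. Alternatively, applying Proposition~\ref{dm-s4} to a version of $g$ modified on a $\dens_x^\dom$-null set needs care, since the essential supremum refers to $\lem$; so the monotonicity argument is the one we will use.

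The main obstacle is the bookkeeping in Step~2: we must justify that the notion ``i.m.'' together with the fact that $\dens_x^\dom$ lives only near $x$ makes the unbounded $f$ and the bounded $g$ indistinguishable for the integral. A second point of care is that $\lem(\dom)$ may be infinite, so the density must be taken relative to $\lem$ restricted near $x$, as in Corollary~\ref{dm-s2c}.
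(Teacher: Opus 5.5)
Your proof is correct and is essentially the paper's: Step~1 uses the same argument via the upper bound in \reff{dm-s2e-2}, and Step~3 uses the same monotonicity rule for inequalities holding i.m. Step~2 is packaged slightly differently (you truncate to a bounded Borel $g$ with $f=g$ off a null set, where the paper uses explicit simple approximants and a majorant criterion), but both rest on the same fact that the density measure vanishes on the set where $|f|$ exceeds a bound such as $\beta=1+\max\{|f^i(x)|,|f^s(x)|\}$.
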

\noi
For $f^i(x)=f^s(x)$ we readily recover (2) from Theorem~\ref{app-s1}.

\begin{proof}
The definition of $\alpha=f^s(x)$ implies that 
\begin{equation*}
  \lim_{\delta\downarrow 0}
  \frac{\lem\big(B_\delta^\dom(x)\cap\{f-\alpha>\eps\}\big)}
       {\lem(B_\delta^\dom(x))} = 0 \qmq{for all} \eps>0\,.
\end{equation*}
Using $\dom_\eps=\{f-\alpha>\eps\}$, we thus get
\begin{equation*}
  0\le \dens_x^\dom(\dom_\eps) \le \limsup_{\delta\downarrow 0}
  \frac{\lem\big(B_\delta^\dom(x)\cap\dom_\eps\big)}{\lem(B_\delta^\dom(x))}
  =0  \qmq{for all} \eps>0 \,.
\end{equation*}
Hence, by definition, $f \le f^s(x)$ i.m. $\dens_x^\dom$. Analogously we can
argue for $f^i(x)$ to get the other inequality in \reff{app-s1a-1}.

With $\beta=1+\max\{|f^i(x)|,|f^s(x)|\}$, we now define simple functions on
$\dom$ by  
\begin{equation*}
  f_k(y) = \bigg\{ 
  \begin{array}{ll} \tfrac{l}{2^k} & \text{on } 
   f^{-1}\big(\big[\tfrac{l}{2^k},\tfrac{l+1}{2^k}\big)\cap[-\beta,\beta]\big) 
       \tx{ for all } l\in\Z \,,\\[1mm]
         0 & \text{otherwise }. 
    \end{array}
\end{equation*}
The $f_k$ approximate $f$ uniformly on $\{|f|\le\beta\}$. By the definition of
$f^i(x)$, $f^s(x)$, we have for any $\eps>0$ that
\begin{eqnarray*}
  0
&\le&
  \dens_x^\dom\big(\{|f_k-f|>\eps\}\cap\{|f|>\beta\}\big)
\:\le\:
  \dens_x^\dom (\{|f|>\beta\}) \\
&\le&
  \limsup_{\delta\downarrow 0}
  \frac{\lem\big(B_\delta^\dom(x)\cap\{|f|>\beta\}\big)}{\lem(B_\delta^\dom(x))}
\:=\:
  0\,.
\end{eqnarray*}
Hence, for all $\eps>0$, 
\begin{equation*}
  \lim_{k\to\infty}\dens_x^\dom(\{|f_k-f|>\eps\})=0
\end{equation*}
and, therefore, $f_k\to f$ i.m. $\dens_x^\dom$. Thus $f$ is
$\dens_x^\dom$-measurable. As above we obtain that $|f|\le\beta$
i.m. $\dens_x^\dom$. Since the constant function $\beta$ is $\mu$-integrable
and majorizes $|f|$, we get that $f$ is $\dens_x^\dom$-integrable
(cf. \cite[p. 113]{rao}). Now \reff{app-s1a-2} follows from \reff{app-s1a-1}
by a standard calculus rule (cf. \cite[p. 106]{rao}). 
\end{proof}

Now we want to apply the previous results to functions of bounded variation.
We assume that $\dom\subset\R^n$ is open and that 
$f\in\cB\cV(\dom)$ is a function of bounded variation. First we collect
several properties from the literature. 
We call $x\in\dom$ a {\it jump point} of $f$ if
\begin{equation*}
  f^i(x) <  f^s(x) \,.
\end{equation*}
Let $\dom_J\subset\dom$ be the set of all jump points and 
$\dom_\infty\subset\dom$ be the set where $f^i$ or $f^s$ is infinite, i.e.
\begin{equation*}
  -\infty < f^i(x)\le f^s(x)<\infty \qmq{on} \dom\setminus\dom_\infty\,.
\end{equation*}
From \cite[p. 210-211]{evans} we obtain
\begin{equation*}
  \lem(\dom_J)=0\,, \qquad \hm(\dom_\infty)=0 \,
\end{equation*}
and, by \cite[p. 168]{ambrosio},
\begin{equation*}
  \tx{\reff{app-s1-2} is satisfied for $\cH^{n-1}$-a.e. $x\in\dom$ }.
\end{equation*}
Moreover, with the notation
\begin{equation*}
  \tilde f(x) := \frac{f^i(x)+f^s(x)}{2}\,,
\end{equation*}
we have
\begin{equation}\label{app-e5}
  \lim_{\de\downarrow 0} \:\mI{B_\de(x)}{|f-\tilde f(x)|}{\lem} = 0
  \qmq{for $\cH^{n-1}$-a.e.} x\in\dom\setminus\dom_J 
\end{equation}
and, for $\cH^{n-1}$-a.e. $x\in\dom_J$, there is some unit vector 
$\nu\in\R^n$ such that 
\begin{equation}\label{app-e6}
  \lim_{\de\downarrow 0} \:\mI{B_\de(x)\cap H_x^-}{|f-f^i(x)|}{\lem} = 0 \,, \quad
  \lim_{\de\downarrow 0} \:\mI{B_\de(x)\cap H_x^+}{|f-f^s(x)|}{\lem} = 0
\end{equation}
where
\begin{equation*}
  H_x^\pm = \{ y\in\R^n\mid \pm \nu\cdot(y-x)>0\}
\end{equation*}
(cf. \cite[p. 213]{evans}, \cite[p. 274]{ziemer}, \cite[p. 171]{ambrosio}). 
By Proposition~\ref{app-s1a} we have that 
\begin{equation*}
  \tx{$f$ is $\dens_x^\dom$-integrable and satisfies \reff{app-s1a-2}}
  \qmq{if} \dens_x^\dom\in\LDens_x\,,\;
  x\in\dom\setminus\dom_\infty\,.
\end{equation*}
Consequently,
\begin{equation*}
  \pr f(x) \qmq{exists for all}  x\in\dom\setminus\dom_\infty\,.
\end{equation*}
Theorem~\ref{app-s1} and Proposition~\ref{app-s1a} now imply the next
representations (that are partially known from e.g. 
\cite[p. 213, 216]{evans}, \cite[p. 276]{ziemer}). 
We use the notation 
$\LDens_x^\pm$ for the set of $\lem$-densities at $x$ within
$\dom\cap H_x^\pm$ (instead of $\dom$) and
$\alim_{y\to x,\: y\in H_x^\pm}$ denotes the approximate limit taken
in $\dom\cap H_x^\pm$.

\medskip

\begin{theorem} \label{app-s2}
Let $\dom\subset\R^n$ be open and let $f\in\cB\cV(\dom)$.
\bgl
\item
We have 
that 
\begin{equation}\label{app-s2-1}
  \tilde f(x) = \pr{f}(x) = f^\star(x) 
  \qquad\text{for\z $\cH^{n-1}$-a.e.} \z x\in\dom\setminus\dom_\infty\,.
\end{equation}
Moreover,
\begin{equation}\label{app-s2-3}
  \tilde f(x) = \pr{f}(x) = \alim_{y\to x} f(y) 
  \qquad \text{for all} \z x\in\dom \setminus (\dom_J\cup\dom_\infty)\,,
\end{equation}
\begin{equation} \label{app-s2-2}
  \tilde f(x) = f^\star(x) = \alim_{y\to x} f(y) 
  \qquad \text{for\z $\cH^{n-1}$-a.e.} \z 
  x\in\dom \setminus(\dom_J\cup\dom_\infty)\,.
\end{equation}
In particular, $\pr f(x)$ is independent of $\dens_x^\dom\in\LDens_x$ in 
\reff{app-s2-1} and \reff{app-s2-3} and $\alim_{y\to x} f(y)$ does not
exist on $\dom_J\cup\dom_\infty$.
 
\item
We have for any $\dens_x^\pm\in\LDens_x^\pm$ and  
$\cH^{n-1}$-a.e. $x\in\dom_J$ that
\begin{eqnarray}
  f^i(x) 
&=& \label{app-s2-5}
  \sI{x}{f}{\dens_x^-} \:=\: 
  \lim_{\de\downarrow 0} \:\mI{B_\de(x)\cap H_x^-}{f}{\lem} 
  \:=\: \alim_{y\to x,\: y\in H_x^-} f(y) \,,\quad  \\
f^s(x) 
&=& \label{app-s2-6}
 \sI{x}{f}{\dens_x^+} \:=\: 
  \lim_{\de\downarrow 0} \:\mI{B_\de(x)\cap H_x^+}{f}{\lem} 
  \:=\: \alim_{y\to x,\: y\in H_x^+} f(y) \,.\quad
\end{eqnarray}
At least one of the approximate limits in \reff{app-s2-5} and \reff{app-s2-6}
does not exist on $\dom_\infty$. 
\el
\end{theorem}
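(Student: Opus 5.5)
The plan is to derive everything from Theorem~\ref{app-s1}, Proposition~\ref{app-s1a}, the equivalence \reff{app-e8}, and the fine properties of $\cB\cV$ functions collected above, namely \reff{app-e5}, \reff{app-e6} and $\hm(\dom_\infty)=0$. Since $\dom$ is open, every $x\in\dom$ lies in $\cdom$ and $B_\de(x)\cap\dom=B_\de(x)$ for small $\de$. Hence all averages may be taken over full balls.

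\emph{Part (1), starting with \reff{app-s2-3}.} Fix $x\in\dom\setminus(\dom_J\cup\dom_\infty)$. Then $f^i(x)=f^s(x)$ is finite, and by \reff{app-e8} we get $\alim_{y\to x}f(y)=f^i(x)=f^s(x)=\tilde f(x)$. By \reff{app-e4a}, which is Theorem~\ref{app-s1}(3), $f$ is $\dens_x^\dom$-integrable and $\pr f(x)=\sI{x}{f}{\dens_x^\dom}=\tilde f(x)$ for every $\dens_x^\dom\in\LDens_x$. This holds for all such $x$ and shows that $\pr f(x)$ does not depend on the choice of density.

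\emph{Part (1), \reff{app-s2-2}.} By \reff{app-e5}, for $\hm$-a.e. $x\in\dom\setminus\dom_J$ condition \reff{app-e0a} holds with $\alpha=\tilde f(x)$. Theorem~\ref{app-s1}(4) then gives $f^\star(x)=\lim_{\de\downarrow0}\mI{B_\de(x)}{f}{\lem}=\tilde f(x)$, and this agrees with the approximate limit.

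\emph{Part (1), \reff{app-s2-1}.} Split $\dom\setminus\dom_\infty$ into the part off $\dom_J$, which is covered by the two previous items, and $\dom_J\setminus\dom_\infty$. On $\dom_J$ we use \reff{app-e6}. Since $\lem(B_\de(x)\cap H_x^\pm)=\frac12\lem(B_\de(x))$, averaging the two half-ball limits gives
\begin{equation*}
\lim_{\de\downarrow0}\mI{B_\de(x)}{f}{\lem}=\tfrac12\big(f^i(x)+f^s(x)\big)=\tilde f(x),
\end{equation*}
so $f^\star(x)=\tilde f(x)$ for $\hm$-a.e. $x\in\dom_J$.

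\emph{The main obstacle.} The hard step is showing $\pr f(x)=\tilde f(x)$ on $\dom_J$ for an arbitrary $\lem$-density. Proposition~\ref{app-s1a} only gives $f^i(x)\le\pr f(x)\le f^s(x)$. My first attempt would be to decompose $\dens_x^\dom$ along the half-spaces. For any Borel $A$, the ratio $\lem(A\cap B_\de(x)\cap H_x^\pm)/\lem(B_\de(x))$ is half of the corresponding ratio in $H_x^\pm$. From \reff{dm-s2e-2} one then wants $\dens_x^\dom(H_x^\pm\cap\dom)=\frac12$, which does hold. Next, the \reff{app-e6} convergence together with Proposition~\ref{app-s1a}-type estimates on each half should give $f=f^s(x)$ i.m. on $H_x^+$ and $f=f^i(x)$ i.m. on $H_x^-$, as seen by the restriction of $\dens_x^\dom$. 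The integral then splits as $\frac12f^s(x)+\frac12f^i(x)$. The restricted measure $2\,\dens_x^\dom\llcorner H_x^+$ need not itself be an $\lem$-density in $\LDens_x^+$, since liminf/limsup bounds do not split. This is exactly where I expect trouble. I would circumvent it by working only with sets $\{|f-f^s(x)|>\eps\}\cap H_x^+$, whose relative density tends to $0$ by \reff{app-e6} and Chebyshev's inequality, so their $\dens_x^\dom$-measure vanishes.

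\emph{Part (2).} Apply Theorem~\ref{app-s1}(4) on the domain $\dom\cap H_x^\pm$, which has positive density at $x$. Condition \reff{app-e6} is exactly \reff{app-e0a} relative to $H_x^\pm$, with $\alpha=f^s(x)$ resp. $f^i(x)$, and this yields \reff{app-s2-5} and \reff{app-s2-6}.

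\emph{Non-existence claims.} By \reff{app-e8}, on $\dom_J\cup\dom_\infty$ either $f^i<f^s$ or one of them is infinite, so $\alim$ fails to exist there. The one-sided version on $\dom_\infty$ follows the same way, since the one-sided upper and lower limits are bounded by $f^s$ and $f^i$.
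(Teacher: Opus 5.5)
Your proof is correct and follows the paper's route almost step by step:
\begin{itemize}
\item \reff{app-s2-3} from \reff{app-e8} and Theorem~\ref{app-s1}(3);
\item \reff{app-s2-2} from \reff{app-e5} and Theorem~\ref{app-s1}(4);
\item part (2) from \reff{app-e6} and Theorem~\ref{app-s1}(4) on $\dom\cap H_x^\pm$;
\item $f^\star=\tilde f$ on $\dom_J$ by averaging the half-ball limits.
\end{itemize}
The only divergence is the step $\pr f(x)=\tilde f(x)$ on $\dom_J$, and the obstacle you anticipate there does not exist. For $A\in\cB(\dom\cap H_x^+)$, apply \reff{dm-s2e-2} for $\dens_x^\dom$ to $A$ itself. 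Since $A\cap B_\delta(x)=A\cap H_x^+\cap B_\delta(x)$, and since $\dom$ is open so that $\lem(\dom\cap B_\delta(x))=2\,\lem(\dom\cap H_x^+\cap B_\delta(x))$ for small $\delta$, multiplying by $2$ gives exactly the defining inequalities of an $\lem$-density at $x$ within $\dom\cap H_x^+$. Nothing has to be split, so $\dens_x^\pm:=2\reme{\dens_x^\dom}{H_x^\pm}\in\LDens_x^\pm$. This is precisely what the paper uses. Together with $\dens_x^\dom=\frac12(\dens_x^++\dens_x^-)$ (the separating hyperplane is $\lem$-null, hence $\dens_x^\dom$-null), part (2) gives $\pr f(x)=\frac12(f^i(x)+f^s(x))$ at once.

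Your workaround is nevertheless valid. You have $\dens_x^\dom(\{|f-f^s(x)|>\eps\}\cap H_x^+)\le\frac1{2\eps}\limsup_{\delta\downarrow0}\mI{B_\delta(x)\cap H_x^+}{|f-f^s(x)|}{\lem}=0$, and likewise on $H_x^-$. Hence $f$ agrees in measure $\dens_x^\dom$ with the simple function $f^s(x)\chi_{H_x^+}+f^i(x)\chi_{H_x^-}$, and $\dens_x^\dom(H_x^\pm\cap\dom)=\frac12$ finishes it. This just re-derives Theorem~\ref{app-s1}(3),(4) by hand for the restricted measures, so it costs length, not correctness. Proving part (2) first and then invoking it is the shorter order.

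One small repair is needed for the final claim about $\dom_\infty$. Write $f^s_\pm(x)$, $f^i_\pm(x)$ for the upper and lower approximate limits taken in $\dom\cap H_x^\pm$. The bounds you invoke, $f^s_\pm(x)\le f^s(x)$ and $f^i(x)\le f^i_\pm(x)$, point the wrong way: they say nothing when $f^s(x)=+\infty$ or $f^i(x)=-\infty$. What you need is the converse. The ratio $\lem(E\cap B_\delta(x))/\lem(B_\delta(x))$ is the average of the two half-ball ratios, so $E$ has density zero at $x$ iff it has density zero in both halves. Hence $f^s(x)=\max\{f^s_+(x),f^s_-(x)\}$ and $f^i(x)=\min\{f^i_+(x),f^i_-(x)\}$. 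So on $\dom_\infty$ one of the one-sided upper or lower approximate limits is infinite, for any choice of the unit vector $\nu$. By the half-space version of \reff{app-e8}, the corresponding one-sided approximate limit then does not exist.
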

\noi 
Let us mention that Theorem~\ref{app-s1} readily implies sharper results
for Sobolev functions $f\in\cW^{1,p}(\dom)$. If $1< p<n$, then
\reff{app-s2-1}  
is satisfied for all 
$x\in\dom$ up to a set of $p$-capacity zero 
(use \cite[p. 147, 160]{evans}). In the case $p>n$, $f$ has a continuous
representative (cf. \cite[p. 143]{evans}) and, thus, 
\reff{app-s2-1} is satisfied for all $x\in\dom$ and $\pr f$ coincides with
the continuous representative. 

\begin{proof}
For $x\in\dom\setminus(\dom_J\cup\dom_\infty)$ we have that
$\alim_{y\to x} f(y)=\tilde f(x)$ by \reff{app-e8}. 
Then \reff{app-s2-3} follows from Theorem~\ref{app-s1}~(3) where 
$\pr f(x)$ is independent of $\dens_x^\dom\in\LDens_x$. \reff{app-e5} and
Theorem~\ref{app-s1}~(4) imply \reff{app-s2-2}. The same way, 
with $\dom\cap H^\pm$ instead of $\dom$, we get 
\reff{app-s2-5} and \reff{app-s2-6} from \reff{app-e6}.
Clearly, $\alim_{y\to x} f(y)$ does not exist on $\dom_\infty$ by \reff{app-e8}
and the approximate limits in \reff{app-s2-5}, \reff{app-s2-6}
do not exist if $f^i(x)$ or $f^s(x)$ is infinite.  

We have \reff{app-s2-1} for 
$\hm$-a.e. $x\in\dom \setminus(\dom_J\cup\dom_\infty)$ by 
\reff{app-s2-3} and \reff{app-s2-2}. For $\hm$-a.e. $x\in\dom_J$
we fix any $\dens_x^\dom\in\LDens_x$. Then 
$\dens_x^\pm:=2\reme{\dens_x^\dom}{H^\pm_x}$ belongs to $\LDens_x^\pm$.
By \reff{app-s2-5}, \reff{app-s2-6} and
$\dens^\dom_x=\frac12(\dens^+_x+\dens^-_x)$, we obtain 
\begin{equation*}
  \tilde f(x) = 
  \tfrac12\Big(\,\sI{x}{f}{\dens_x^+}+ \sI{x}{f}{\dens_x^-}\Big) =  
  \sI{x}{f}{\dens_x^\dom} = \pr f(x)
\end{equation*}
and
\begin{equation*}
  \tilde f(x) = \tfrac12\Big(
\lim_{\de\downarrow 0} \:\mI{B_\de(x)\cap H^+}{f}{\lem} + 
\lim_{\de\downarrow 0} \:\mI{B_\de(x)\cap H^-}{f}{\lem} \Big) =
 \lim_{\de\downarrow 0} \:\mI{B_\de(x)}{f}{\lem} = f^\star(x)\,.
\end{equation*}
Thus we have \reff{app-s2-1} for $\hm$-a.e. 
$x\in\dom \setminus\dom_\infty$.
\end{proof}

\smallskip

Let us now discuss the (boundary) trace of functions $f\in\cB\cV(\dom)$. 
For that we assume that $\dom\subset\R^n$ is open and bounded with 
Lipschitz boundary $\partial\dom$. Then there is a trace function
$f^{\partial\dom}$ on $\partial\dom$ such that
\begin{equation*}
  \lim\limits_{\de\downarrow 0} \:
  \mI{B_\de(x)\cap\dom}{\big|f-f^{\partial\dom}(x)\big|}{\lem} = 0
  \qquad \text{for\z $\cH^{n-1}$-a.e.} \z x\in\partial\dom \,
\end{equation*}
(cf. \cite[p. 181]{evans}).
From Theorem~\ref{app-s1} 
we directly obtain the next representations, whereby the equality 
with $f^\star$ is standard (cf. \cite[p. 181]{evans}).
By $\cW^{1,p}(\dom)\subset\cB\cV(\dom)$ also Sobolev functions are covered.  

\begin{proposition}  \label{app-s3}
Let $\dom\subset\R^n$ be open and bounded with Lipschitz boundary, 
let $\dens_x^\dom$ be an $\lem$-density at $x$ within $\dom$, 
and let $f\in\cB\cV(\dom)$. Then
\begin{eqnarray*}
  f^{\partial\dom}(x) 
&=&
  \pr{f}(x) \:=\: f^\star(x) 
  \qquad \text{for\z $\cH^{n-1}$-a.e.} \z x\in\partial\dom  \,.
\end{eqnarray*}
\end{proposition}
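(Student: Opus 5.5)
The plan is to combine the trace property quoted just before the statement with Theorem~\ref{app-s1}~(4). The only preparatory step is to check that boundary points qualify, i.e. $x\in\cdom$ for every $x\in\partial\dom$. This holds because $\dom$ is open with Lipschitz boundary: near any $x\in\partial\dom$ the set $\dom$ is the region above a Lipschitz graph, so it contains a truncated cone with vertex at $x$. Hence $\lem(\dom\cap B_\delta(x))>0$ for all $\delta>0$, and $\LDens_x$ is nonempty by Corollary~\ref{dm-s2c}, since $\dom$ is bounded and \reff{dm-s2c-0} therefore holds.

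Now fix $x$ in the full-$\cH^{n-1}$-measure subset of $\partial\dom$ on which
\begin{equation*}
  \lim_{\de\downarrow 0}\:\mI{B_\de(x)\cap\dom}{\big|f-f^{\partial\dom}(x)\big|}{\lem}=0\,.
\end{equation*}
This is exactly \reff{app-e0a} with $\alpha=f^{\partial\dom}(x)$. Since $f\in\cB\cV(\dom)\subset\cL^1(\dom)\subset\cL^1_{\rm loc}(\dom)$, Theorem~\ref{app-s1}~(4) applies and gives that $f$ is $\dens_x^\dom$-integrable with
\begin{equation*}
  f^{\partial\dom}(x)=\sI{x}{f}{\dens_x^\dom}=\lim_{\de\downarrow 0}\:\mI{B_\de(x)\cap\dom}{f}{\lem}\,.
\end{equation*}

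It remains to identify these quantities with the representatives. Because the integral exists, the first case of \reff{app-s1-6} applies and gives $\pr f(x)=f^{\partial\dom}(x)$. Because the limit of averages exists, the first case of \reff{app-s1-5} gives $f^\star(x)=f^{\partial\dom}(x)$; equivalently, one can cite \reff{app-e4}.

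The argument also shows that $\pr f(x)$ does not depend on the choice of $\dens_x^\dom\in\LDens_x$ at these points. This is because Theorem~\ref{app-s1}~(4) holds for every such density, or alternatively because of \reff{app-e4a}, using Theorem~\ref{app-s1}~(1) to pass to the approximate limit.

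The only point needing care is the first step: making sure the boundary points lie in $\cdom$ and that $\dens_x^\dom$ is a density \emph{within} $\dom$ even though $x\notin\dom$. Both the definition and Theorem~\ref{app-s1} explicitly allow $x\in\cl\dom$, so I expect no real obstacle there; everything else is direct citation.
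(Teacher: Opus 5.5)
Your proposal is correct and is essentially the paper's argument. The paper also obtains the statement directly from the $\cH^{n-1}$-a.e.\ trace property, which is exactly \reff{app-e0a} with $\alpha=f^{\partial\dom}(x)$, combined with Theorem~\ref{app-s1}~(4), and it notes that the equality with $f^\star$ is standard. Your extra check that every $x\in\partial\dom$ lies in $\cdom$, via the truncated cone inside a Lipschitz domain, is correct but optional, since the statement already presupposes an $\lem$-density $\dens_x^\dom$ at $x$.
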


For open bounded $\dom\subset\R^n$ with Lipschitz boundary $\partial\dom$,
the trace function $f^{\partial\dom}$ is an important ingredient of the
Gauss-Green formula. We have e.g. for any Sobolev function  
$f\in\cW^{1,1}(\dom)$ that
\begin{equation} \label{gg-e1}
  \I{\dom}{f\divv\phi}{\lem} + \I{\dom}{\phi\cdot Df}{\lem} =
  \I{\bd\dom}{f^{\partial\dom}\phi\cdot\nu^{\bd\dom}}{\cH^{n-1}}
\end{equation}
for all $\phi\in C^1(\ol\dom,\R^n)$ where $\nu^{\bd\dom}$ denotes the outer
unit normal of $\dom$ (cf. \cite[p. 168]{pfeffer}). The necessity of a trace
function  
$f^{\partial\dom}$ and of an outer unit normal $\nu^{\partial\dom}$ on the
boundary $\bd\dom$ is an essential limitation of this fundamental formula. 
The integral representation of $f^{\partial\dom}$ by $\pr f$
in terms of measures
$\dens_x^\dom$ suggests that we should look for a replacement of the boundary
integral on $\bd\dom$ with the $\si$-measure $\hm$ by an integral on
$\dom$ with a pure measure having core in $\bd\dom$. This way one might
avoid the necessity of a trace function and a normal field defined on 
$\bd\dom$. 
For a suitable replacement of the normals $\nu^{\partial\dom}$, one needs
some vector field $\nu^\dom$ on $\dom$ that
approximates the normals $\nu^{\bd\dom}$ near $\bd\dom$. The gradients of
the Lipschitz continuous distance function $\distf{\bd\dom}$,
that is differentiable $\lem$-a.e., appear to be an appropriate
tool. We define the {\it normal field} of $\dom$ by
\begin{equation*}
  \nu^\dom:= -D\distf{\bd\dom}\,
\end{equation*}
which approximates $\nu^{\bd\dom}$ near the boundary $\bd\dom$.
Moreover, the $\si$-measure $\reme{\cH^{n-1}}{\bd\dom}$, considered as 
functional on $C(\ol\dom)$, can be extended by the Hahn-Banach theorem 
to a linear continuous functional on $\cL^\infty(\dom)$ and, thus, provides 
some pure measure $\om$ in $\bawl{\dom}$ such that
\begin{equation}\label{gg-e2}
  |\om|(\dom)=\hm(\bd\dom)\,, \quad
  \cor{\om}\subset\bd\dom\,,\quad
  \I{\bd\dom}{\tf}{\hm} = \sI{\bd\dom}{\tf}{\om}  \zmz{for all} C(\ol\dom)\,
\end{equation}
and the normalized measure $\frac{\om}{|\om|(\dom)}$ 
is obviously a density measure at $\bd\dom$
(cf. \cite[p. 25]{trace}). However, the extension is not unique, 
while a special one is needed for a replacement in \reff{gg-e1}.
It turns out that there is some $\om\in\bawl{\dom}$ satisfying \reff{gg-e2}
such that, for any $f\in\cW^{1,1}(\dom)$,
\begin{equation} \label{gg-1}
  \I{\dom}{f\divv\phi}{\lem} + \I{\dom}{\phi\cdot Df}{\lem} =
  \sI{\bd\dom}{f\phi\cdot\nu^{\dom}}{\omega} \qmq{for all}
  \phi\in\cW^{1,\infty}(\dom,\R^n)
\end{equation}
(cf. \cite[p. 147]{trace}).
Notice that the integral on the right hand side is in fact an integral 
on~$\dom$. Thus there is no need for $f$, $\tf$, and $\nu^\dom$ to be defined
on $\bd\dom$. Hence we do not need 
\bgl[--]
\item a pointwise trace function $f^{\bd\dom}$ on $\bd\dom$, 
\item test functions $\tf$ defined up to the boundary, and
\item pointwise normals $\nu^{\bd\dom}$ on the boundary.
\el
We can consider $\nu^\dom\omega$ as (vector-valued) normal
measure on $\dom$ concentrated near $\bd\dom$.

This approach allows substantial extensions of the Gauss Green formula. 
For demonstration let us sketch how $\dom$ with inner
boundaries can be treated. Let $\dom\subset\R^2$
consist of two touching squares $\dom_j$ as e.g.  
\begin{equation*}
  \dom=\dom_1\cup\dom_2=\big((0,1)\cup(1,2)\big)\times(0,1)  \,.
\end{equation*}
For some $f\in\cW^{1,1}\dom)$, we first have \reff{gg-1} for each $\dom_j$
separately with pure measures $\om_j$ on $\dom_j$. 
Here $\dom_j$ is an aura of $\om_j$ and, roughly speaking, 
$\om_j$ only sees values of $f$ and $\tf$ on $\dom_j$. Extending the $\om_j$
on $\dom$ by zero and summing up, we get a Gauss-Green formula on all of
$\dom$ with $\om=\om_1+\om_2$ and for test functions
$\tf\in\cW^{1,\infty}(\dom,\R^n)$ that do not have to be extendable up to
$\bd\dom$. This allows a more flexible analysis. If the inner
boundary corresponds e.g. to some crack within some material occupying
$\dom$, we can analyze the behavior of the material on both sides of the
crack separately by choosing test functions that vanish either on $\dom_1$ or
on $\dom_2$.   
For a comprehensive presentation of the treatment of traces by means of
finitely additive measures and related extensions of the Gauss-Green formula, 
we refer to Schuricht \& Schönherr~\cite{trace}.

\section{More explicit examples to some special problems}
\label{app-sm}

For $\dom=(0,1)\subset\R$ it turns out that there are measures 
$\mu\in\op{ba}(\dom,\cB(\dom),\cL^1)$ such that
\begin{equation*}
  \int_0^{1-\frac1k} f\,d\mu = 0 \z\zmz{for all}\z 
  f\in\cL^\infty(\dom)\,,\: k\in\N
  \z\qmq{but}\z \int_0^1 1\,d\mu = 1\,.
\end{equation*}
Toland writes in the preface of his book \cite{toland} 
that ``the target audience is anyone who feels nervous'' about the existence
of uncountably many such measures $\mu$. For a better understanding of this
situation let us provide a more explicit example for $\mu$. It turns out that
\begin{equation*}
  \mu = \dens_0^\dom\,,
\end{equation*}
as constructed in Example~\ref{pl-s2},
is a pure measure with that property. To see this we use that
$\cor{\mu}=\{0\}$, that $\alim_{y\to 0}1=1$,
and we apply Theorem~\ref{app-s1} (3). 

For a further example let $\dom\subset\R^n$ be open. 
In Toland \cite[p. 28]{toland} it is shown with the Hahn-Banach Theorem
that there is some nontrivial $\me\in\bawl{\dom}$ such that
\begin{equation} \label{app-e2}
  \I{\dom}{f}{\me}=0 \quad\qmz{for all continuous} f\in L^\infty(\dom) \,
\end{equation}
(cf. also \cite[p. 57]{yosida}).
Let us provide also here some examples for such $\me$ that are more explicit.
For $x\in\cdom$ we choose disjoint sets $\E_1,\E_2\subset\dom$ with 
\begin{equation}\label{app-sm3}
  \lem(\E_j\cap B_\delta(x))>0 \qmq{for all} \delta>0\,, \z j=1,2\,.
\end{equation}
Then $x\in\cl E_1,\cl E_2$. Applying Corollary~\ref{dm-s2c}
with $\E_j$ instead of $\dom$, we obtain $\lem$-densities 
$\dens_x^{\E_j}$ at $x$ in $\bawl{\E_j}$, that we identify with their zero
extension on $\dom$. This way we get nontrivial density measures 
$\dens_x^{\E_j}\in\bawl{\dom}$ at $x$ with aura $\E_j$. For
\begin{equation*}
  \me := \dens_x^{\E_2}- \dens_x^{\E_1}\in\bawl{\dom}\,
\end{equation*}
we then have
\begin{equation*}
  \me\ne 0\,, \quad \me(\dom)=0 \,.
\end{equation*}
Theorem~\ref{app-s1} (3) with $\E_j$ instead of
$\dom$ implies
\begin{equation*}
  f(x) = \alim_{y\to x} f(y) = \sI{x}{f}{\dens_x^{\E_j}} 
  \qmz{for continuous} f\in L^\infty(\dom)\,, \z j=1,2\,. 
\end{equation*}
Hence, for any continuous $f\in\cL^\infty(\dom)$, 
\begin{equation*}
  \I{\dom}{f}{\me}=\I{\dom}{f}{\dens_x^{E_2}} - \I{\dom}{f}{\dens_x^{E_1}} =
  f(x)-f(x) = 0\,,
\end{equation*}
i.e. \reff{app-e2} is satisfied. Since there is much freedom in selecting the
$E_j$, there is a large variety of nontrivial measures satisfying
\reff{app-e2}.  

More generally, let $\dom\subset\cB(\R^n)$, and let $x\in\cdom$. 
With $E_j$ as in \reff{app-sm3} we can construct some $\me\ne 0$ as above. 
By Proposition~\ref{dm-s4} we then have
\begin{equation*}
  \I{\dom}{f}{\me}=0 
\end{equation*}
for all $f\in\cL^\infty(\dom)$ with $\ix f=\sx f$.

\section{Generalized gradients}
\label{cgg}

Let $X$ be a Banach space and let $f:X\to\R$ be a locally Lipschitz
continuous function. The {\it generalized directional derivative} 
of $f$ at $x\in X$ in direction $v\in X$ (in the sense of Clarke) is 
\begin{equation} \label{cgg-e1}
  f^\circ(x;v) = 
  \limsup_{\substack{y\to x\\t\downarrow 0}} \frac{f(y+tv)-f(y)}{t} \,.
\end{equation}
As function of $v$ it is positively 1-homogenous,
subadditive and, thus, convex and it satisfies $|f^\circ(x;v)|\le c\|v\|$
for all $v\in X$ where $c\ge 0$ is a Lipschitz constant of $f$ near $x$.
The {\it generalized gradient} of $f$ at $x$ is the convex weak$^*$ compact
set 
\begin{equation} \label{cgg-e0}
  \partial f(x) = \{f^*\in X^*\mid \df{f^*}{v}\le f^\circ(x;v)
  \text{ for all } v\in X\} \,
\end{equation}
and $f^\circ(x;\cdot)$ is the support function of $\partial f(x)$
(cf. Clarke \cite{clarke}).

We study the special case that $f:\R^n\to\R$ is locally
Lipschitz continuous. Then we have
$f\in\cW^{1,\infty}_{\rm loc}(\R^n)$ (cf. \cite[p. 131]{evans}).
By Rademacher's theorem, the derivative $Df$ of $f$ exists $\lem$-a.e. (cf.
\cite[p. 81]{evans}). We set
\begin{equation*}
  D_f := \big\{ y\in\R^n\:\big|\: f\mbox{ is differentiable at }y \big\} \,.
\end{equation*}
Then we get the next representation of $f^\circ(x;v)$.

\begin{proposition} \label{cgg-s0}
Let $f:\R^n\to\R$ be locally Lipschitz continuous and let $x\in\R^n$. Then
\begin{equation} \label{cgg-s0-1}
  f^\circ(x;v) = \tesup{x}\: Df\cdot v 
  = \widetilde{\sup_{x}}\: Df\cdot v
  \qmq{for all} v\in\R^n
\end{equation}
with
\begin{equation*}
  \widetilde{\sup_{x}}\: Df\cdot v :=
  \lim_{\delta\downarrow 0}\, \sup_{B_\delta(x)\cap D_f} Df\cdot v =
  \inf_{\delta>0}\, \sup_{B_\delta(x)\cap D_f} Df\cdot v \,.
\end{equation*}
\end{proposition}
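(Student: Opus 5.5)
We prove the chain of inequalities
\begin{equation*}
  \widetilde{\sup_{x}}\: Df\cdot v \;\le\; f^\circ(x;v) \;\le\; \tesup{x}\: Df\cdot v \;\le\; \widetilde{\sup_{x}}\: Df\cdot v ,
\end{equation*}
where $\tesup{x}$ is the essential supremum near $x$ (i.e. $\sC$ with $\C=\{x\}$, $\dom=\R^n$), applied to the $\cL^\infty_{\rm loc}$ function $Df\cdot v$. Only points of a small ball $B_\delta(x)$ matter, and $f$ is Lipschitz there with some constant $c$, so $|Df|\le c$ on $D_f\cap B_\delta(x)$. All three quantities are therefore finite and well defined as limits of monotone quantities in $\delta$.

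The last inequality is immediate. Since $\lem(\R^n\setminus D_f)=0$, we have $\essup{B_\delta(x)} Df\cdot v \le \sup_{B_\delta(x)\cap D_f} Df\cdot v$ for every $\delta$; letting $\delta\downarrow0$ gives it.

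The first inequality follows from the definition \reff{cgg-e1}. Fix $y\in B_\delta(x)\cap D_f$. Then
\begin{equation*}
  Df(y)\cdot v=\lim_{t\downarrow0}\frac{f(y+tv)-f(y)}{t}\le \sup\Big\{\tfrac{f(z+tv)-f(z)}{t}\;\Big|\; z\in B_\delta(x),\,0<t<\delta\Big\}.
\end{equation*}
Taking the supremum over such $y$ and then $\delta\downarrow0$ yields $\widetilde{\sup_{x}} Df\cdot v\le f^\circ(x;v)$, since the right-hand side decreases to the $\limsup$ in \reff{cgg-e1}.

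The main step is the middle inequality, and it needs a Fubini argument along lines parallel to $v$ (assume $v\neq0$). Let $M_\delta:=\essup{B_{2\delta}(x)} Df\cdot v$. The set $N=B_{2\delta}(x)\cap\{y\in D_f\mid Df(y)\cdot v> M_\delta\}\cup(B_{2\delta}(x)\setminus D_f)$ is an $\lem$-null set. By Fubini, for $\lem$-a.e. $y\in B_\delta(x)$ the line $\{y+sv\}$ meets $N$ in a one-dimensional null set. For such $y$ and $0<t<\delta/|v|$, the map $s\mapsto f(y+sv)$ is Lipschitz, hence absolutely continuous, with derivative $Df(y+sv)\cdot v$ at every $s$ with $y+sv\in D_f$. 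So
\begin{equation*}
  f(y+tv)-f(y)=\int_0^t Df(y+sv)\cdot v\,ds\le tM_\delta .
\end{equation*}
Continuity of $f$ extends the inequality $f(y+tv)-f(y)\le tM_\delta$ from a dense full-measure subset to all $y\in B_\delta(x)$. Taking the $\limsup$ gives $f^\circ(x;v)\le M_\delta$ for every $\delta$, and letting $\delta\downarrow0$ gives $f^\circ(x;v)\le\tesup{x} Df\cdot v$.

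The only delicate point is the measurability and null-set bookkeeping in the Fubini step. We handle it by rotating coordinates so that $v$ is a coordinate direction.
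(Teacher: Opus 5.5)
Your proof is correct and follows essentially the same route as the paper. Both establish the chain $f^\circ(x;v)\le$ (essential supremum near $x$) $\le$ (supremum over $D_f$ near $x$) $\le f^\circ(x;v)$. In both, the first inequality comes from integrating $Df\cdot v$ along almost every line parallel to $v$ (Fubini) and then extending to all base points by continuity of $f$. Your version is slightly leaner in two places. You get absolute continuity on every line directly from the Lipschitz property, where the paper uses a.e.-line absolute continuity of Sobolev functions. You also obtain the last inequality straight from the definition of the derivative, where the paper introduces auxiliary times $t_{y,\gamma}$.
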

\noi
It is worth to notice that $\widetilde{\op{ess\mspace{2mu}sup}}$
agrees with $\widetilde\sup$, i.e. 
for the $\cL^\infty$-function $Df$ it does not matter in \reff{cgg-s0-1} 
if we consider $Df$ only on $D_f\setminus N$ for some $\cL^n$-null set $N$.  
The result is implicitly contained in the Corollary of \cite[p. 64]{clarke}.
For the proof we basically argue as in Clarke \cite[p. 63-64]{clarke}, but
we do not use $\partial f(x)$ and its properties. 

\begin{proof}
Let us fix $x\in\R^n$ and $v\in\R^n$. Then 
there is a ball $B\subset\R^n$ centered at $x$ such that
$f\in\cW^{1,\infty}(B)$ and the definition of the generalized
directional derivative gives
\begin{equation} \label{cgg-s0-5}
   f^\circ(x;v) = \lim_{\delta\to 0}
  \sup_{\substack{y\in B_\delta(x)\\0<t<\delta}} \frac{f(y+tv)-f(y)}{t} 
\end{equation}
(cf.  \cite[p. 38 with $\delta=1$]{clarke}). 
Using lines $L_{y,v}:=\{y+tv\mid t\in\R\}$, we set
\begin{equation*}
  AC_{f,v}:= \big\{ y\in B \:\big|\: 
             f\text{ is absolutely continuous on } L_{y,v}\cap B \big\}\,.
\end{equation*}
Since $f$ is absolutely continuous on \mbox{$\cH^{n-1}$-a.e.} line
directed as $v$, the set $AC_{f,v}$ has full $\cL^n$-measure in $B$
(cf. \cite[p.~164]{evans}). 
For any $\cL^n$-null set $N$, 
\begin{equation*}
  D_0 := \big\{ y\in B \:\big|\:
  \cL^1\big( (L_{y,v}\cap B)\setminus ( D_f\setminus N) \big) > 0 \big\}
\end{equation*}
is an $\lem$-null set. Hence $D:=AC_{f,v}\setminus D_0$ has full
$\lem$-measure in $B$ and, thus, it is dense in $B$. 
Obviously $L_{y,v}\cap B\subset D$ if $y\in D$. Moreover,
$D_f\setminus N$ has full $\cL^1$-measure on all segments 
$L_{y,v}\cap B$ belonging to $D$.
For small $\delta>0$ and $\gamma:=\delta(1+|v|)$ we have by continuity of
$f$ 
\begin{eqnarray*}
  \sup_{\substack{y\in B_\delta(x)\\0<t<\delta}} 
  \frac{f(y+tv)-f(y)}{t}
&=&
  \sup_{\substack{y\in B_\delta(x)\cap D\\0<t<\delta}} 
  \frac{f(y+tv)-f(y)}{t} \\
&=&
  \sup_{\substack{y\in B_\delta(x)\cap D\\0<t<\delta}} 
  \frac{1}{t} \int_0^t Df(y+sv)\cdot v\: ds  \\
&\le&
  \essup{y\in B_{\gamma}(x)\cap D_f}{Df(y)\cdot v} 
\:\le\:
  \sup_{y\in B_{\gamma}(x)\cap D_f} Df(y)\cdot v \,.
\end{eqnarray*}
For each $y\in B_\gamma(x)\cap D_f$ there is $t_{y,\gamma}\in(0,\delta)$ 
such that $y+t_{y,\gamma}v\in B_\gamma(x)$ and
\begin{equation*}
  Df(y)\cdot v \le \frac{f(y+t_{y,\gamma}v)-f(y)}{t_{y,\gamma}} + \delta \,.
\end{equation*}
Taking the limit $\delta\to 0$, we get from \reff{cgg-e1} and \reff{cgg-s0-5}
\begin{eqnarray*}
  f^\circ(x;v) 
&\le&
  \lim_{\delta\to 0}\sup_{y\in B_\gamma(x)\cap D_f} 
  \frac{f(y+t_{y,\gamma}v)-f(y)}{t_{y,\gamma}} + \delta
  \le f^\circ(x;v) \,
\end{eqnarray*}
which implies \reff{cgg-s0-1}.
\end{proof}

Now we formulate two characterizations of the generalized gradient 
$\partial f(x)$.
While the first one is due to Clarke \cite[p.63]{clarke},
the second one seems to be new. Notice that we have 
$Df\in\cL^\infty_{\rm loc}(\dom,\R^n)$.

\begin{theorem} \label{cgg-s1}
Let $f:\R^n\to\R$ be locally Lipschitz continuous, let 
$N\subset\R^n$ be an $\cL^n$-null set, and let $x\in\R^n$. Then
\begin{eqnarray}
  \partial f(x) 
&=& \label{cgg-s1-1}
  \op{conv} \big\{ \lim_{k\to\infty} Df(x_k)\:\big|\: 
  x_k\to x,\; x_k\in D_f\setminus N,\; \lim_{k\to\infty} Df(x_k)
  \text{ exists\,}  \big\}\, \\
&=& \label{cgg-s1-2}
  \df{\Dens_x}{Df} 
\end{eqnarray}
where $\op{conv}$ denotes the convex hull.
\end{theorem}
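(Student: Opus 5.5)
The plan is to show that all three sets are nonempty, convex and compact in $\R^n$, and that they have the same support function $v\mapsto f^\circ(x;v)$. A closed convex set is uniquely determined by its support function \reff{dm-supp}, so equality follows. For $\partial f(x)$ this is immediate from \reff{cgg-e0}: it is convex and compact, and $f^\circ(x;\cdot)$ is its support function. Proposition~\ref{cgg-s0} then identifies this support function with $\tesup{x}\,Df\cdot v=\widetilde{\sup_x}\,Df\cdot v$.

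\emph{Identity \reff{cgg-s1-2}.} $Df$ is only in $\cL^\infty_{\rm loc}$, so I would first localize. Every $\mu\in\Dens_x$ satisfies $\mu(\dom\setminus B_1(x))=0$, so by \reff{dm-e1} the pairing $\df{\mu}{Df}$ only sees $Df$ on $B_1(x)$. Hence I may replace $Df$ by $F:=\chi_{B_1(x)}Df\in\cL^\infty(\R^n,\R^n)$, which changes neither $\df{\Dens_x}{Df}$ nor the essential suprema near $x$. Alternatively, I would work directly with $\dom=B_1(x)$. Since $\{x\}$ is a bounded density set, Remark~\ref{dm-s2f} gives $\Dens_x\neq\emptyset$. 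Proposition~\ref{dm-s7}, applied componentwise together with weak$^*$ compactness and convexity of $\Dens_x$ (Theorem~\ref{dm-s1a}), shows that $\df{\Dens_x}{F}$ is a convex compact subset of $\R^n$. Corollary~\ref{dm-s8} gives its support function as $\sx(F\cdot v)=\tesup{x}\,Df\cdot v$, which equals $f^\circ(x;v)$ by Proposition~\ref{cgg-s0}.

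\emph{Identity \reff{cgg-s1-1}.} Let $S$ be the set of limits $\lim_k Df(x_k)$ with $x_k\to x$ and $x_k\in D_f\setminus N$. Near $x$, $|Df|$ is bounded by a local Lipschitz constant $c$, so $S\subset \cl{B_c(0)}$. $S$ is closed by a diagonal argument, and it is nonempty because $D_f\setminus N$ has full measure and is therefore dense. Thus $S$ is compact, and $\op{conv}S$ is compact by Carathéodory. Its support function is $\sup_{w\in S}w\cdot v$, and I would show
\begin{equation*}
  \sup_{w\in S} w\cdot v=\lim_{\delta\downarrow0}\,\sup_{B_\delta(x)\cap (D_f\setminus N)} Df\cdot v=:s(v).
\end{equation*}
For ``$\le$'': if $w=\lim Df(x_k)$, then for each $\delta$ almost all $x_k$ lie in $B_\delta(x)\cap(D_f\setminus N)$, so $w\cdot v\le \sup_{B_\delta(x)\cap(D_f\setminus N)}Df\cdot v$. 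For ``$\ge$'': choose $x_k\in B_{1/k}(x)\cap(D_f\setminus N)$ with $Df(x_k)\cdot v\ge \sup_{B_{1/k}(x)\cap(D_f\setminus N)}Df\cdot v-1/k$, and extract a convergent subsequence by boundedness.

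Finally, $s(v)$ is sandwiched between the essential supremum near $x$ and $\widetilde{\sup_x}\,Df\cdot v$. The essential supremum ignores the null set $N\cup D_f^c$, so it is at most the supremum over $D_f\setminus N$, and removing $N$ can only lower the supremum over $D_f$. Proposition~\ref{cgg-s0} says the two outer quantities coincide, so $s(v)=f^\circ(x;v)$.

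I expect the main obstacle to be this sandwich step, in particular justifying that the essential supremum of $Df\cdot v$ on $B_\delta(x)$ is at most the pointwise supremum over $B_\delta(x)\cap(D_f\setminus N)$. This holds because $Df\cdot v$ exceeds the latter only on a subset of the null set $N\cup D_f^c$. The localization of $Df$ to an $\cL^\infty$ function is the other point that needs care, and \reff{dm-e1} handles it.
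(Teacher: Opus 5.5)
Your proof is correct. For \reff{cgg-s1-2} it follows the paper's proof: support functions via Corollary~\ref{dm-s8} and Proposition~\ref{cgg-s0}. You also make explicit the localization of $Df\in\cL^\infty_{\rm loc}$ to an $\cL^\infty$ function, which the paper leaves tacit.

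For \reff{cgg-s1-1} you take a genuinely different route. The paper does not compute the support function of the limit set (your $S$, the paper's $M$). Instead it uses the extreme-point theory of Section~\ref{ep}:
\begin{itemize}
\item For each $\mu_x\in\ZO_x$ it sets $F=\sI{x}{Df}{\mu_x}$ and shows $\mu_x(\{|Df-F|<\eps\})=1$.
\item Since $\cor\mu_x=\{x\}$, this set has positive $\lem$-measure in every $B_\delta(x)$. This yields $x_k\in D_f\setminus N$ with $x_k\to x$ and $Df(x_k)\to F$, hence $\df{\ZO_x}{Df}\subset M$.
\item Theorem~\ref{ep-s2} together with Krein--Milman then gives $\df{\Dens_x}{Df}\subset\op{conv}M$.
\item The reverse inclusion is your ``$\le$'' step: $F\cdot v\le f^\circ(x;v)$ for $F\in M$.
\end{itemize}

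Your argument is essentially Clarke's original one. You prove that $S$ is compact and identify $\sup_{w\in S}w\cdot v$ with $f^\circ(x;v)$ by sandwiching it between $\tesup{x}\,Df\cdot v$ and $\widetilde{\sup_{x}}\,Df\cdot v$.

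Your route is more economical and more self-contained. It needs only uniqueness of support functions, with no Krein--Milman and no \zo measures. It also supplies the compactness of the limit set, hence the closedness of its convex hull. The paper's Krein--Milman step tacitly needs this, since Krein--Milman plus weak$^*$ continuity of $\mu\mapsto\df{\mu}{Df}$ only yields that $\df{\Dens_x}{Df}$ lies in the closure of $\op{conv}M$.

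The paper's route buys structural insight. It shows that the integrals of $Df$ against the extreme density measures at $x$ are themselves limits of gradients. This links Clarke's limiting-gradient description directly to the \zo measures of Section~\ref{ep}.
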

\noi
 
\begin{proof}
\reff{cgg-e0} implies that $f^\circ(x;v)$ is the support function of
the closed convex set $\partial f(x)$. By Corollary~\ref{dm-s8} 
and Proposition~\ref{cgg-s0}, it is also
support function of $\df{\Dens_x}{Df}$. The uniqueness of the support function
gives \reff{cgg-s1-2} (cf. \cite[p. 29]{clarke}). 

Let $M$ be the set of limits on the right hand side in \reff{cgg-s1-1}. 
For $\mu_x\in\ZO_x$ we have that $Df$ is $\mu_x$-integrable. 
Applying Proposition~\ref{dm-s4} to the components of  
$F=\sI{x}{Df}{\mu_x}$, we get 
\begin{equation*}
   \mu_x\big(\{|Df-F|<\eps\}\big) = 1 \qmq{for all} \eps>0\,.
\end{equation*}
Using $\cor\mu_x=\{x\}$, we obtain that $\{|Df-F|<\eps\}\cap B_\de(x)$ has
positive $\lem$-measure for all $\eps,\de>0$. Hence, we can find 
\begin{equation*}
  x_k\in \{|Df-F|<\tfrac1k\}\cap B_{\frac1k}(x) \qmq{for all} k\in\N\,.
\end{equation*}
Thus $x_k\to x$ and $Df(x_k)\to F$. Consequently, $\df{\ZO_x}{Df}\subset M$.
Taking the convex hull, $\df{\Dens_x}{Df}\subset\op{conv}M$
by Theorem~\ref{ep-s2}. 
Let now $F\in M$. Then $Df(x_k)\to F$ for some sequence $x_k\to x$ and, by
\reff{cgg-s0-1}, 
\begin{equation*}
  Df(x_k)\cdot v \: \to \: F\cdot v \: \le \: \widetilde{\sup_{x}}\:
  Df\cdot v \: = \: f^\circ(x;v) 
  \qmq{for all} v\in\R^n\,.
\end{equation*}
Since $f^\circ(x;v)$ is support function of the convex set
$\partial f(x)=\df{\Dens_x}{Df}$, 
we finally get that $\op{conv}M\subset\df{\Dens_x}{Df}$.
\end{proof}

With the representation from Proposition~\ref{cgg-s1} we can now easily derive 
fundamental calculus rules 
(cf. \cite[p. 38-48]{clarke}).

\begin{proposition} \label{cgg-s3}
Let $f,g:\R^n\to\R$ be locally Lipschitz continuous. 

\bgl
\item
{\rm (Scalar multiple)} For $x\in\R^n$ we have
\begin{equation*}
  \partial(sf)(x) = s\partial f(x)
  \qmq{for all} s\in\R\,.
\end{equation*}
\item
{\rm (Sum rule)} For $x\in\R^n$ we have
\begin{equation*}
  \partial\big(\alpha f(x) +\beta g(x)\big)(x) \:\subset\:
  \alpha \partial f(x) + \beta\partial g(x) 
  \qmq{for all} \alpha,\beta\in\R\,.
\end{equation*}

\item
{\rm (Product rule)} For $x\in\R^n$ we have
\begin{equation*}
  \partial\big(fg\big)(x) \subset f(x)\partial g(x) + g(x)\partial f(x) \,. 
\end{equation*}
\el
\end{proposition}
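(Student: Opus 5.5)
The plan is to use the representation \reff{cgg-s1-2} from Theorem~\ref{cgg-s1}, $\partial f(x)=\df{\Dens_x}{Df}$, together with linearity of the integral with respect to a fixed density measure. Locally Lipschitz functions are closed under scalar multiples, linear combinations and products. Their gradients exist $\lem$-a.e. and are locally essentially bounded. Hence all three assertions reduce to computing $D(\cdot)$ $\lem$-a.e. and integrating against $\mu\in\Dens_x$. We always work in a fixed ball $B_\delta(x)$; by \reff{dm-e1} only values near $x$ matter.

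For (1), $D(sf)=s\,Df$ $\lem$-a.e. Because $\mu\in\bawl{\R^n}$ does not charge $\lem$-null sets, this gives $\df{\mu}{D(sf)}=s\df{\mu}{Df}$ for every $\mu\in\Dens_x$. Therefore $\df{\Dens_x}{D(sf)}=s\df{\Dens_x}{Df}$, which is the claim; $s<0$ causes no difficulty, since we just scale a set. For (2), $D(\alpha f+\beta g)=\alpha Df+\beta Dg$ $\lem$-a.e. So for each $\mu\in\Dens_x$,
\[
\df{\mu}{D(\alpha f+\beta g)}=\alpha\df{\mu}{Df}+\beta\df{\mu}{Dg}\in\alpha\,\partial f(x)+\beta\,\partial g(x),
\]
using the \emph{same} $\mu$ in both terms. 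Only an inclusion results, because the right-hand side allows independent choices of measures.

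For (3), the product rule $D(fg)=f\,Dg+g\,Df$ holds $\lem$-a.e. (on $D_f\cap D_g$). Fix $\mu\in\Dens_x$. The main step, and the only real obstacle, is to show
\[
\sI{x}{f\,Dg}{\mu}=f(x)\sI{x}{Dg}{\mu}.
\]
We write $f\,Dg=f(x)Dg+(f-f(x))Dg$. By continuity of $f$, $\sup_{B_\delta(x)}|f-f(x)|\to0$. With $|Dg|\le c$ on a neighborhood, we obtain
\[
\Big|\I{B_\delta(x)}{(f-f(x))Dg}{\mu}\Big|\le c\sup_{B_\delta(x)}|f-f(x)|.
\]
Passing $\delta\downarrow0$ and using \reff{dm-e1} (equivalently Proposition~\ref{dm-s4} applied componentwise to the function $(f-f(x))Dg$, whose essential sup and inf near $x$ vanish), this term is $0$. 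The same argument applies to $g\,Df$. Hence
\[
\df{\mu}{D(fg)}=f(x)\df{\mu}{Dg}+g(x)\df{\mu}{Df}\in f(x)\partial g(x)+g(x)\partial f(x),
\]
and taking all $\mu\in\Dens_x$ gives the inclusion.
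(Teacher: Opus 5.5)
Your proposal is correct and follows essentially the same route as the paper. All three rules are read off from $\partial f(x)=\df{\Dens_x}{Df}$ via linearity of the integral for a fixed $\mu\in\Dens_x$, which is why (2) and (3) only give inclusions. The only cosmetic difference is in (3): you dispose of $\int (f-f(x))\,Dg\,d\mu$ by the explicit estimate $c\sup_{B_\delta(x)}|f-f(x)|$ on small balls, whereas the paper states the same fact as $f-f(x)=0$ i.m.\ $\mu$ by continuity.
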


\begin{proof}
For (1) we use
\begin{equation*}
  \big\langle \Dens_x, D(sf) \big\rangle =
  s\big\langle \Dens_x, Df \big\rangle
\end{equation*}
and, by \reff{cgg-s1-2}, the statement follows. 

For (2) we have
\begin{eqnarray*}
  \big\langle \Dens_x,D\big(\alpha f +\beta g \big) \big\rangle 
&=&
  \big\langle \Dens_x, \alpha Df + \beta Dg \big\rangle \\
&\subset&
  \alpha \langle\Dens_x,Df\rangle + \beta \langle\Dens_x,Df\rangle  \,.
\end{eqnarray*}
and \reff{cgg-s1-2} gives the stated inclusion. 

For (3) we first realize 
that, by continuity,  $f-f(x)=0$ i.m. $\mu$ for any
$\mu\in\Dens_x$ and, hence, 
\begin{equation} \label{cgg-s3-10}
  \sI{x}{fDg}{\mu} = f(x)\sI{x}{Dg}{\mu} + \sI{x}{\big(f-f(x)\big)Dg}{\mu}
  = f(x)\sI{x}{Dg}{\mu}\,.
\end{equation}
Consequently,
\begin{eqnarray*}
  \big\langle \Dens_x,D(fg) \big\rangle 
&=&
  \big\langle \Dens_x,fDg+gDf \big\rangle \\
&\subset&
  f(x) \langle \Dens_x,Dg \rangle + g(x) \langle \Dens_x,Df \rangle 
\end{eqnarray*}
and the assertion follows from \reff{cgg-s1-2}. 
\end{proof}

\bibliographystyle{plain}

\vspace{5mm}

\noi
Moritz Schönherr\\
TU Dresden, Fakultät Mathematik, 01062 Dresden, Germany \\
email: {\it moritz.schoenherr@posteo.de}

\bigskip

\noi
Friedemann Schuricht (corresponding author)\\
TU Dresden, Fakultät Mathematik, 01062 Dresden, Germany\\
email: {\it friedemann.schuricht@tu-dresden.de}

\end{document}